\newcommand{\bR}{\mathbf{R}}
\newcommand{\bx}{\mathbf{x}}
\newcommand{\bb}{\mathbf{b}}
\newcommand{\bu}{\mathbf{u}}
\newcommand{\bv}{\mathbf{v}}
\newcommand{\bw}{\mathbf{w}}
\newcommand{\bbf}{\mathbf{f}}
\newcommand{\bg}{\mathbf{g}}
\newcommand{\p}{\partial}
\newcommand{\CV}{C_{\mbox{\tiny V}}}
\newcommand{\CW}{C_{\mbox{\tiny W}}}
\newcommand{\tCV}{\widetilde{C}_{\mbox{\tiny V}}}
\newcommand{\tCW}{\widetilde{C}_{\mbox{\tiny W}}}
\newcommand{\ome}{\omega}
\newcommand{\deltaV}{\delta_{\mbox{\tiny V}}}
\newcommand{\deltaW}{\delta_{\mbox{\tiny W}}}
\newcommand{\omeV}{\ome_{\mbox{\tiny V}}}
\newcommand{\omeW}{\ome_{\mbox{\tiny W}}}
\newcommand{\Ome}{\Omega}
\newcommand{\nab}{\nabla}
\newcommand{\cE}{\mathcal{E}}
\newcommand{\cG}{\mathcal{G}}
\newcommand{\Emu}{\cE_{\mbox{\tiny mu}}}
\newcommand{\Esy}{\cE_{\mbox{\tiny sy}}}
\newcommand{\cI}{\mathcal{I}}
\newcommand{\cA}{\mathcal{A}}
\newcommand{\cF}{\mathcal{F}}
\newcommand{\cL}{\mathcal{L}}
\newcommand{\cR}{\mathcal{R}}
\newcommand{\cS}{\mathcal{S}}
\newcommand{\cP}{\mathcal{P}}
\newcommand{\Pad}{\cP_{\mbox{\tiny ad}}}
\newcommand{\Pmu}{\cP_{\mbox{\tiny mu}}}
\newcommand{\Psy}{\cP_{\mbox{\tiny sy}}}
\newcommand{\cQ}{\mathcal{Q}}
\newcommand{\Qad}{\cQ_{\mbox{\tiny ad}}}
\newcommand{\tP}{\widetilde{P}}
\newcommand{\ctP}{\widetilde{\cP}}
\newcommand{\tQ}{\widetilde{Q}}
\newcommand{\ctQ}{\widetilde{\cQ}}
\newcommand{\Div}{{\rm div\ }}
\newcommand{\normV}[1]{\Vert#1\Vert_{\mbox{\tiny V}}}
\newcommand{\normW}[1]{\Vert#1\Vert_{\mbox{\tiny W}}}
\newcommand{\normVj}[1]{\Vert#1\Vert_{\mbox{\tiny V$_j$}}}
\newcommand{\normWj}[1]{\Vert#1\Vert_{\mbox{\tiny W$_j$}}}
\newcommand{\normWnot}[1]{\Vert#1\Vert_{\mbox{\tiny W$_0$}}}
\newtheorem{remark}{Remark}[section]
\begin{document}
\title{On Schwarz Methods for Nonsymmetric and Indefinite Problems} 
\markboth{X. FENG and C. LORTON}{SCHWARZ METHODS FOR NONSYMMETRIC AND
INDEFINITE PROBLEMS}

\author{
Xiaobing Feng\thanks{Department of Mathematics, The University of
Tennessee, Knoxville, TN 37996, U.S.A.  ({\tt xfeng@math.utk.edu}).
The work of this author was partially supported by the NSF grants DMS-0710831
and DMS-1016173.}
\and
Cody Lorton\thanks{Department of Mathematics, The University of
Tennessee, Knoxville, TN 37996, U.S.A.  ({\tt lorton@math.utk.edu}).
The work of this author was partially supported by the NSF grants DMS-0710831
and DMS-1016173.}
}

\maketitle

\begin{abstract}
In this paper we introduce a new Schwarz framework and theory, 
based on the well-known idea of space decomposition, for nonsymmetric 
and indefinite linear systems arising from continuous and discontinuous
Galerkin approximations of general nonsymmetric and indefinite elliptic
partial differential equations. The proposed Schwarz framework and theory 
are presented in a variational setting in Banach spaces 
instead of Hilbert spaces which is the case for the well-known symmetric
and positive definite (SPD) Schwarz framework and theory. 
Condition number estimates for the additive and hybrid Schwarz preconditioners 
are established. The main idea of our nonsymmetric and indefinite Schwarz 
framework and theory is to use weak coercivity (satisfied by the nonsymmetric 
and indefinite bilinear form) induced norms to replace the standard 
bilinear form induced norm in the SPD Schwarz framework and theory. 
Applications of the proposed nonsymmetric and indefinite Schwarz framework  
to solutions of discontinuous Galerkin approximations of 
convection-diffusion problems are also discussed.  Extensive 1-D numerical 
experiments are also provided to gauge the performance of the proposed 
Schwarz methods.
\end{abstract}

\begin{keywords}
Schwarz methods and preconditioners, domain decomposition, space 
decomposition, inf-sup condition, strong and weak coercivity, 
condition number estimates.
\end{keywords}

\begin{AMS}
65N55, 65F10
\end{AMS}


\section{Introduction} \label{sec-1}
The original Schwarz method, proposed and analyzed by Hermann 
Schwarz in 1870 \cite{Schwarz1870}, is an iterative method to find the solution 
of a partial differential equation (PDE) on a complicated domain which 
is the union of two overlapping simpler subdomains. The method solves
the equation on each of the two subdomains by using 
the latest values of the approximate solution as the boundary conditions
on the parts of the subdomain boundaries which are inside of the given
domain. The idea of splitting a given problem posed on a large (and possibly 
complicated) domain into several subproblems posed on smaller subdomains
and then solving the subdomain problems either sequentially or in parallel 
is a very appealing idea. Such a  ``divide-and-conquer" idea is
at the heart of every domain decomposition or Schwarz method.

It is well-known that \cite{TW05} the domain decomposition strategy 
can be introduced at the following three different levels: the continuous
level for PDE analysis as proposed and analyzed by Hermann Schwarz in 1870, 
the discretization level for constructing 
(hybrid and composite) discretization methods, and the algebraic level
for solving algebraic systems arising from the numerical approximations 
of PDE problems. These three levels are often interconnected, and each
of them has its own merit to be studied.  Most of the recent
efforts and attentions have been focused on the algebraic level. 
The field of domain decomposition methods has blossomed and 
undergone intensive and phenomenal development during the last 
thirty years (cf. \cite{SBG96,QV99,TW05} and the references therein). 
The phenomenal development   
has largely been driven by the ever increasing demands for fast solvers
for solving important and complicated scientific, engineering and 
industrial application problems which are often governed mathematically
by a PDE or a system of PDEs. It has also been infused and 
facilitated by the rapid advances in computer hardware and 
the emergence of parallel computing technologies. 

At the algebraic level, domain decomposition methods
or Schwarz methods have been well developed and studied 
for various numerical approximations (discretizations)
of many types of PDE problems including finite element 
methods (cf. \cite{DW90,Xu92}), mixed finite element methods and 
spectral methods (cf. \cite{TW05}), and discontinuous Galerkin 
methods (cf. \cite{FK01,LT03,FK05,AA07}). A general abstract framework, backed by an elegant convergence theory, was well established 
many years ago for symmetric and positive definite (SPD) PDE problems
and their numerical approximations (cf. \cite{DW90,Xu92,SBG96,QV99,TW05,XZ02}
and the references therein).  

Despite the tremendous advances in domain decomposition (Schwarz) 
methods over the past thirty years, the current framework and 
convergence theory are mainly confined to SPD problems 
in Hilbert spaces. Because the framework and especially the 
convergence theory indispensably rely on the SPD properties of the underlying 
problem and the Hilbert space structures, they do not apply to 
genuinely nonsymmetric and/or indefinite problems. As a result, the SPD 
framework and theory leave many important and interesting problems uncovered
as pointed out in \cite[page 311]{TW05}.

This paper attempts to address this important issue 
in Schwarz methods. The goal of this paper is 
to introduce a new Schwarz framework and theory, based on the well-known 
idea of space decomposition as in the SPD case, for nonsymmetric
and indefinite linear systems arising from continuous and discontinuous
Galerkin approximations of general nonsymmetric and indefinite elliptic
partial differential equations under some ``minimum" 
structure assumptions. Unlike the SPD framework and theory,
our new framework and theory are presented in a 
variational setting in Banach spaces instead of Hilbert spaces. 
Such a general framework allows broader applications of Schwarz methods.
Both additive Schwarz and multiplicative as weel as hybrid Schwarz methods
are developed.  A comprehensive convergence  theory is provided 
which includes condition number estimates for the
additive Schwarz preconditioners and hybrid Schwarz preconditioners.
The main idea of our nonsymmetric and indefinite Schwarz framework and
theory is to use weak coercivity (satisfied by the nonsymmetric and
indefinite bilinear form) induced norms to replace the standard
bilinear form induced norm in the SPD Schwarz framework and theory
(see Sections \ref{sec-2}--\ref{sec-4} for a detailed exposition). 
As expected, working with such weak coercivity induced norms and 
nonsymmetric and indefinite bilinear forms is quite delicate.  It requires
new and different technical tools in order to establish our convergence theory.

The remainder of this paper is organized in the following way.  
In Section \ref{sec-2}, we introduce notation, the functional 
setting, and the variational problems which we aim to solve. 
Section 2 also contains some further discussions on the main idea of 
the paper.  Section \ref{sec-3} is devoted to establishing an abstract 
additive Schwarz, multiplicative Schwarz, and hybrid Schwarz
framework for general nonsymmetric and indefinite algebraic problems
in a variational setting in general Banach spaces.
In Section \ref{sec-4}, we present an abstract convergence theory for
the additive and hybrid Schwarz methods proposed in Section \ref{sec-3}. 
In Section \ref{sec-5}, we present some applications of the proposed
nonsymmetric and indefinite Schwarz framework 
to discontinuous Galerkin approximations of convection-diffusion 
(in particular, convection-dominated) problems.  We also provide 
extensive 1-D numerical experiments to gauge the performance of the 
proposed nonsymmetric and indefinite Schwarz methods.

\section{Functional setting and statement of problems} \label{sec-2}

\subsection{Variational problem}\label{sec-2.1}
Let $X$ be a real Hilbert space with the inner product $(\cdot,\cdot)_X$ 
and the induced norm $\|\cdot\|_X$. Let $V, W\subset X$ be two 
reflexive Banach spaces endowed with the norms $\|\cdot\|_V$ and 
$\|\cdot\|_W$ respectively. Let $\cA(\cdot,\cdot)$ be a real bilinear form 
defined on the product space $V\times W$ and $\cF$ be a real linear 
functional defined on $W$. We consider the following variational 
problem: Find $u\in V$ such that 
\begin{align}\label{eq2.1}
\cA(u,w)=\cF(w) \qquad\forall w\in W.
\end{align}
The well-posedness of the above variational problem has been extensively studied.
%
%
One of such results is summarized in the following theorem:

\begin{theorem}{\rm (cf. \cite{BA72})} \label{babuska}
Suppose that $\cF$ is a bounded linear functional on $W$. 
Assume that $\cA(\cdot,\cdot)$ is continuous and {\em weakly coercive}
in the sense that there exist constants $C_\cA,\gamma_\cA >0$ such that
\begin{alignat}{2}\label{eq2.6}
|\cA(v,w)| &\leq C_\cA  \|v\|_V \|w\|_W &&\qquad\forall v\in V,\, w\in W,\\
\sup_{w\in W} \frac{\cA(v,w)}{\|w\|_W}
&\geq \gamma_\cA \|v\|_V &&\qquad\forall v\in V, \label{eq2.7}\\
\sup_{v\in V} \cA(v,w) &>0  &&\qquad\forall\,0\neq w\in W. \label{eq2.8} 
\end{alignat}
Then problem \eqref{eq2.1} has a unique solution $u\in V$. Moreover,
\begin{align}\label{eq2.5}
\|u\|_V\leq \frac{\|\cF\|}{ \gamma_\cA}.
\end{align}

\end{theorem}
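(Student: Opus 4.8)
The plan is to recast \eqref{eq2.1} as an operator equation in the dual space $W'$ and to prove that the operator associated with $\cA$ is a bounded linear bijection. First I would introduce the operator $A:V\to W'$ defined by $\dual{Av}{w}=\cA(v,w)$ for all $w\in W$, where $\dual{\cdot}{\cdot}$ denotes the duality pairing between $W'$ and $W$. The continuity estimate \eqref{eq2.6} shows at once that $A$ is well-defined and bounded, with $\|Av\|_{W'}\le C_\cA\|v\|_V$. With this reformulation, \eqref{eq2.1} is equivalent to solving $Au=\cF$, and since $\cF\in W'$ by hypothesis, everything reduces to showing that $A$ is invertible.

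Next I would reinterpret the weak coercivity (inf-sup) condition \eqref{eq2.7}. Observing that $\sup_{w\in W}\cA(v,w)/\|w\|_W=\|Av\|_{W'}$, inequality \eqref{eq2.7} reads precisely $\|Av\|_{W'}\ge\gamma_\cA\|v\|_V$ for all $v\in V$. This lower bound immediately yields two facts: $A$ is injective, and the range $R(A)$ is closed in $W'$ (if $Av_n$ is Cauchy, the inequality forces $v_n$ to be Cauchy in the Banach space $V$, and its limit maps under $A$ to the limit of $Av_n$ by boundedness). It also delivers the quantitative estimate for free: once $Au=\cF$ is solved, the same inequality gives $\gamma_\cA\|u\|_V\le\|Au\|_{W'}=\|\cF\|$, which is exactly \eqref{eq2.5}.

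The remaining and most delicate step is surjectivity, i.e.\ $R(A)=W'$. Since $R(A)$ is already known to be closed, it suffices to prove that $R(A)$ is dense. Here I would invoke the Hahn--Banach theorem together with the reflexivity of $W$: if $R(A)$ were not dense, there would exist a nonzero element of $(W')'=W''$ annihilating $R(A)$; by reflexivity this element is represented by some $0\ne w\in W$, and the annihilation condition $\dual{Av}{w}=0$ for all $v$ becomes $\cA(v,w)=0$ for all $v\in V$. But then $\sup_{v\in V}\cA(v,w)=0$, contradicting the nondegeneracy hypothesis \eqref{eq2.8}. Hence $R(A)$ is dense, and being closed it equals $W'$, so $A$ is a bijection and $Au=\cF$ has a unique solution. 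The main obstacle is precisely this density argument: the reflexivity of $W$ is essential in order to identify the annihilator of $R(A)$ with a genuine element of $W$, so that \eqref{eq2.8} can be brought to bear; this is the step where the Banach-space (rather than Hilbert-space) setting requires the most care.
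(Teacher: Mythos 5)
Your proof is correct and complete. The paper itself does not prove Theorem \ref{babuska}; it quotes the result from \cite{BA72} (the Babu\v{s}ka--Lax--Milgram theorem), and your argument --- reformulating \eqref{eq2.1} as $Au=\cF$ with $A:V\to W'$, deriving injectivity, the closed range, and the bound \eqref{eq2.5} from the inf-sup condition \eqref{eq2.7}, and obtaining surjectivity by combining Hahn--Banach with the reflexivity of $W$ and the nondegeneracy condition \eqref{eq2.8} --- is precisely the standard proof found in that reference, with the reflexivity hypothesis (stated in the paper's Section \ref{sec-2.1}) correctly identified as the ingredient that makes the density step work in the Banach-space setting.
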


\begin{remark}\label{rem2.1}
%
(a) Theorem \ref{babuska} is called Lax-Milgram-Babu\v{s}ka 
theorem in the literature (cf. \cite{Rosca89}). It was first introduced to 
the finite element context in \cite{Babuska71} 
(also see \cite{BA72}). An earlier version of the theorem can also be 
found in \cite{Nirenberg55}.

(b) As pointed out in \cite[page 117]{BA72}, condition \eqref{eq2.8}
can be replaced by the following more restrictive condition: There exists 
a constant $\beta_\cA>0$ such that
\begin{align}\label{eq2.8a}
\sup_{v\in V} \frac{\cA(v,w)}{\|v\|_V}
\geq \beta_\cA \|w\|_W \qquad\forall w\in W. 
\end{align}
The above condition can be viewed as a {\em weak coercivity} condition 
for the adjoint bilinear form $\cA^*(\cdot,\cdot)$ of $\cA(\cdot,\cdot)$. 

(c) Weak coercivity condition \eqref{eq2.7} is often called the {\em inf-sup} 
or {\em Babu\v{s}ka--Brezzi} condition in the finite element
literature \cite{BS08,Ciarlet78} for a different reason. It appears 
and plays a vital role for saddle point problems 
and their (mixed) finite element approximations (cf. \cite{Brezzi74,BF91}).  

(d) Theorem \ref{babuska} is certainly valid when $V=W$. 
Since condition \eqref{eq2.7} is weaker than the strong coercivity,
then Theorem \ref{babuska} is a stronger result than the classical
Lax-Milgram Theorem for the case $V=W$. Indeed, for most convection-dominated 
convection-diffusion problems, $V=W$. However, there are situations where
condition \eqref{eq2.7} holds but strong coercivity fails. 

(e) There are also situations where one prefers to use different norms for 
the trial space $V$ and the test space $W$ even if $V=W$. Theorem \ref{babuska} 
also provides a convenient framework to handle such a situation.
\end{remark}

\subsection{Discrete problem}\label{sec-2.2}
As problem \eqref{eq2.1} is posed on infinite dimensional spaces $V$ 
and $W$, to solve it numerically, one must approximate $V$ and $W$ by
some finite dimensional spaces $V_n, W_n\subset X$.  Here $n=\mbox{dim}(V_n) 
=\mbox{dim}(W_n)$ is a positive integer which denotes the dimension
of $V_n$ and $W_n$. If one of (or both) $V_n$ and $W_n$ 
is not a subspace of its corresponding infinite dimensional space, then
one also needs to provide an approximate bilinear form $a(\cdot,\cdot)$ 
for $\cA(\cdot,\cdot)$ so that $a(\cdot,\cdot)$ is well defined 
on $V_n\times W_n$. In addition, if $W_n$ is not a subspace of $W$ 
one also needs to provide an approximate linear functional $f$ for $\cF$ 
so that $f$ is well defined on $W_n$. 

Once $V_n, W_n, a$ and $f$ are constructed, the Galerkin method 
for problem \eqref{eq2.1} is defined as seeking $u_n\in V_n$ such that
\begin{align}\label{eq2.10}
a(u_n,w_n)=f(w_n) \qquad \forall w_n\in W_n.
\end{align}

Pick a basis $\{\phi^{(j)}\}_{j=1}^n$ of $V$ and a basis 
$\{\psi^{(j)}\}_{j=1}^n$ of $W$.  It is trivial to check that the discrete
variational problem \eqref{eq2.10} can be rewritten as the following linear 
system problem:   
\begin{align}\label{eq2.11}
A\bu=\bbf,
\end{align}
where $\bu=[u^{(j)}]_{j=1}^n$ is the coefficient vector of the 
representation of $u_n$ in terms of the basis $\{\phi^{(j)}\}_{j=1}^n$ and
\begin{alignat}{2}\label{eq2.11a}
A&=\bigl[a_{ij}\bigr]_{i,j=1}^n,  &\qquad a_{ij} &=a(\phi^{(j)},\psi^{(i)}),\\
\bbf&=\bigl[f^{(i)}\bigr]_{i=1}^n, &\qquad f^{(i)} &= f(\psi^{(i)}).
\end{alignat}

The properties of matrix $A$ (called a stiffness matrix)
are obviously determined by the properties of the discrete bilinear
form $a(\cdot,\cdot)$ and the approximate spaces $V_n$ and $W_n$. 
When $V_n=W_n$ it is well known that \cite{GVL96} $A$ is symmetric 
if and only if $a(\cdot,\cdot)$ is symmetric and $A$ is positive 
definite provided that $a(\cdot,\cdot)$ is strongly coercive 
on $V_n\times V_n$. 
In general, $A$ is just an $n\times n$ nonsymmetric real matrix if 
$a(\cdot,\cdot)$ is not symmetric.  $A$ also can be indefinite (i.e., 
$A$ has at least one negative and one positive eigenvalue) if $a(\cdot,\cdot)$ 
fails to be coercive.  

As \eqref{eq2.11} is a square linear system, by a well-known 
algebraic fact we know that \eqref{eq2.11} has a unique solution $\bu$ 
provided that the stiffness matrix $A$ is nonsingular. This nonsingular
condition on $A$ becomes necessary if one wants \eqref{eq2.11} 
to be uniquely solvable for {\em arbitrary} vector $\bbf$.  For most 
application problems (such as boundary value problems for elliptic
PDEs), one needs to consider various choices of the ``load"
functional $\cF$, so the vector $\bbf$ is practically 
``arbitrary" in \eqref{eq2.11}. Hence, besides some deeper mathematical 
and algorithmic considerations, asking for the stiffness matrix $A$ to 
be {\em nonsingular} is a ``minimum" requirement for the discretization 
method \eqref{eq2.10} to be practically useful. 

Sufficient conditions on the discrete bilinear form $a(\cdot,\cdot)$ and 
the approximate spaces $V_n$ and $W_n$ which infer the unique solvability
of the linear system \eqref{eq2.11} have been well 
studied and understood in the past thirty years.  In particular, 
for the SPD type (algebraic) problems arising from various 
discretizations of boundary value problems for elliptic 
PDEs \cite{Babuska71,BA72,Ciarlet78,BS08,BA72,BM97,Riviere08}.  In the 
following we shall quote some of these well-known results in a theorem 
which is a counterpart of Theorem \ref{babuska}. 


\begin{theorem}{\rm (cf. \cite{Babuska71,BA72})} \label{discrete_babuska}
Suppose that $f$ is a bounded linear functional on $W_n$. 
Assume that $a(\cdot,\cdot)$ is continuous and 
weakly coercive in the sense that there exist constants 
$C_a,\gamma_a,\beta_a >0$ such that 
\begin{alignat}{2}\label{eq2.16}
|a(v,w)| &\leq C_a  \|v\|_{V_n} \|w\|_{W_n} &&\qquad\forall v\in V_n,
\, w\in W_n,\\
\sup_{w\in W_n} \frac{a(v,w)}{\|w\|_{W_n}}
&\geq \gamma_a \|v\|_{V_n} &&\qquad\forall v\in V_n, \label{eq2.17}\\
\sup_{v\in V_n} \frac{a(v,w)}{\|v\|_{V_n}}
&\geq \beta_a \|w\|_{W_n} &&\qquad\forall w\in W_n. \label{eq2.18}
\end{alignat}
Then problem \eqref{eq2.10} has a unique solution $u_n\in V_n$. Moreover,
\begin{align}\label{eq2.15}
\|u_n\|_{V_n}\leq \frac{M_f}{\gamma_a}
\end{align}
where $M_f$ is a positive constant.

\end{theorem}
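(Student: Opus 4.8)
The plan is to exploit the crucial structural fact that $\dim V_n = \dim W_n = n < \infty$, which renders the reformulated linear system $A\bu = \bbf$ of \eqref{eq2.11} square. For a square system, unique solvability for an arbitrary right-hand side is equivalent to nonsingularity of $A$, and nonsingularity is in turn equivalent to injectivity alone, i.e.\ to the implication $A\bu = 0 \Rightarrow \bu = 0$. I would therefore not prove existence and uniqueness as two separate tasks; instead I would establish injectivity directly from the inf-sup condition \eqref{eq2.17} and let the square-matrix fact supply surjectivity (hence existence) for free. The stability bound \eqref{eq2.15} will then fall out of the same inf-sup estimate applied to the genuine solution.

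First I would dispatch uniqueness. Suppose $u_n \in V_n$ satisfies $a(u_n,w_n) = 0$ for every $w_n \in W_n$. Inserting this into \eqref{eq2.17} gives
\[
\gamma_a \|u_n\|_{V_n} \le \sup_{w\in W_n} \frac{a(u_n,w)}{\|w\|_{W_n}} = 0,
\]
so $\|u_n\|_{V_n} = 0$ and hence $u_n = 0$. In matrix terms this says $A\bu = 0$ forces $\bu = 0$, so $A$ is nonsingular; consequently \eqref{eq2.10}, equivalently \eqref{eq2.11}, possesses a unique solution $u_n \in V_n$ for the given $f$.

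For the a priori estimate \eqref{eq2.15} I would apply \eqref{eq2.17} to the actual solution $u_n$ and then substitute the discrete equation \eqref{eq2.10}:
\[
\gamma_a \|u_n\|_{V_n} \le \sup_{w\in W_n} \frac{a(u_n,w)}{\|w\|_{W_n}}
= \sup_{w\in W_n} \frac{f(w)}{\|w\|_{W_n}} = \|f\|.
\]
Setting $M_f := \|f\|$, which is finite because $f$ is assumed bounded on $W_n$, yields $\|u_n\|_{V_n} \le M_f/\gamma_a$, which is exactly \eqref{eq2.15}.

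There is no genuine technical obstacle here; the only point demanding care is the \emph{role} of the hypotheses, and I expect this to be the one conceptually interesting feature. In this finite-dimensional setting with matching dimensions, condition \eqref{eq2.18} is in fact redundant: \eqref{eq2.17} alone already forces $A$ to be nonsingular, and symmetrically \eqref{eq2.18} alone would do the same via the adjoint map $w \mapsto a(\cdot,w)$, since a linear map between equidimensional finite-dimensional spaces that is bounded below is automatically bijective. The two inf-sup conditions, which are genuinely independent in the infinite-dimensional Theorem \ref{babuska}---there one needs both the lower bound \eqref{eq2.7} and the nondegeneracy \eqref{eq2.8} (or its stronger form \eqref{eq2.8a}), precisely because being bounded below does not imply surjectivity---collapse to equivalent statements once the spaces are finite-dimensional with equal dimension. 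Accordingly, an alternative one-line argument is simply to invoke Theorem \ref{babuska} with $V := V_n$ and $W := W_n$ (finite-dimensional, hence reflexive Banach spaces), observing that \eqref{eq2.16}--\eqref{eq2.18} are exactly \eqref{eq2.6}, \eqref{eq2.7}, and the stronger version \eqref{eq2.8a} of \eqref{eq2.8}, while \eqref{eq2.5} specializes to \eqref{eq2.15}.
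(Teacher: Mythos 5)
Your proof is correct. Note first that the paper itself offers no proof of this theorem: it is quoted as a known result with a citation to Babu\v{s}ka and Babu\v{s}ka--Aziz, so the ``paper's route'' is essentially your one-line alternative at the end --- specialize the Lax--Milgram--Babu\v{s}ka theorem (Theorem \ref{babuska}) to the finite-dimensional spaces $V_n$, $W_n$, observing that \eqref{eq2.18} is the strengthened form \eqref{eq2.8a} of the nondegeneracy condition \eqref{eq2.8}. Your main, self-contained argument is the standard elementary one and is sound: injectivity of the stiffness matrix follows directly from \eqref{eq2.17}, squareness of the system ($\dim V_n=\dim W_n=n$) upgrades injectivity to bijectivity, and the stability bound \eqref{eq2.15} with $M_f:=\sup_{0\neq w\in W_n} f(w)/\|w\|_{W_n}$ follows by applying \eqref{eq2.17} to the solution and substituting the discrete equation --- this is in fact sharper than the paper's vague ``$M_f$ is a positive constant.'' Your observation that \eqref{eq2.18} is redundant in the equidimensional discrete setting is also correct and is a genuine conceptual point the paper glosses over: the two inf-sup conditions are independent hypotheses only in infinite dimensions (where bounded-below does not imply surjective), and the paper carries \eqref{eq2.18} in the discrete theorem chiefly so that the adjoint problem --- which it needs later for the operators $\ctQ_j$ defined in \eqref{eq3.23} and for the norm $\|\cdot\|_{a^*}$ --- is uniformly well posed, not because it is needed for solvability of \eqref{eq2.10} itself. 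What your elementary route buys is transparency about exactly which hypothesis does what; what the citation route buys is uniformity of presentation with the continuous case.
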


A few remarks are in order about the above well-posedness theorem.

\begin{remark}\label{rem2.2}
%
(a) Condition \eqref{eq2.18} is equivalent to requiring that the adjoint
$a^*(\cdot,\cdot)$ of $a(\cdot,\cdot)$ is weakly coercive.

(b) Conditions \eqref{eq2.16}--\eqref{eq2.18} are analogies of 
their continuous counterparts \eqref{eq2.6}--\eqref{eq2.8}. 
Discrete weak coercivity condition \eqref{eq2.17} is often called the 
{\em inf-sup} or {\em Babu\v{s}ka--Brezzi} condition in the finite element
literature \cite{BS08,Ciarlet78} for a different reason. It is the 
most important one in a set of sufficient conditions for a mixed finite 
element to be stable (cf. \cite{Brezzi74,BF91}).

(c) A numerical method which fulfills conditions \eqref{eq2.16}--\eqref{eq2.18}
is guaranteed to be uniquely solvable and stable. Hence, these conditions can 
be used as a test stone to determine whether a numerical method is a 
``good" method. For this reason, we shall call the numerical 
method \eqref{eq2.10} an {\em inf-sup preserving method}
or a {\em weak coercivity preserving method} if it satisfies
\eqref{eq2.16}--\eqref{eq2.18}.

(d) Theorem \ref{discrete_babuska}
focuses on the unique solvability and the stability of the 
numerical method \eqref{eq2.10} not on the accuracy of the
method. We like to note that method \eqref{eq2.10} indeed is
an accurate numerical method provided that approximate spaces $V_n$ 
and $W_n$ are accurate approximations of $V$ and $W$ (cf. \cite{BA72}).
\end{remark}

\subsection{Main objective}
As we briefly explained above, approximating the variational 
problem \eqref{eq2.1} by a Galerkin method certainly
results in solving the linear system \eqref{eq2.11}. 
It is well known that the common dimension $n$ of the approximation 
spaces $V_n$ and $W_n$ has to be sufficiently large in order 
for the Galerkin method to be accurate.  As a result,
the size of the linear system (i.e., the size of the matrix $A$) is
expected to be very large in applications. 
Moreover, if \eqref{eq2.1} is a variational formulation 
of some elliptic boundary value problem, then the stiffness
matrix $A$ is certainly ill-conditioned in the sense that
the condition number $\kappa(A):=\|A\| \|A^{-1}\|$ is very large. 
Here $\|A\|$ denotes a matrix norm of $A$. For example, 
in the case of second and fourth order elliptic boundary value problems,
$\kappa(A)=O(n^{\frac{2}{d}})$ and $\kappa(A)=O(n^{\frac{4}{d}})$, respectively, 
where d is the spatial dimension of the domain. 
(cf. \cite{BS08,TW05}). Consequently, it is not efficient to solve 
linear system \eqref{eq2.11} directly using classical 
iterative methods even if they converge. Furthermore, unlike 
in the SPD case, classical iterative methods often do not converge 
for general nonsymmetric and indefinite linear system \eqref{eq2.11}
(cf. \cite{GVL96,TW05}).

As a first step toward developing better iterative solvers for 
nonsymmetric and indefinite linear system \eqref{eq2.11}, it is natural 
to design a ``good" preconditioner (i.e., an $n\times n$ real matrix $B$) 
such that $BA$ is well-conditioned (i.e., $\kappa(BA)$ is 
relatively small, say, significantly smaller than $\kappa(A)$).  Then one 
can try classical iterative methods.  In particular, the Generalized Minimal 
Residual (GMRES) method can be used on the preconditioned system
\begin{equation} \label{eq2.20}
BA \bx=B\bb.
\end{equation}
One can also develop some new (and hopefully better) iterative methods 
if classical iterative methods still do not work as well 
on \eqref{eq2.20} as one had hoped.

As was already mentioned in Section \ref{sec-1}, the focus of
this paper is exactly what is described above. Our goal is to 
develop a new Schwarz framework and theory, based on the well-known idea of 
space decomposition, for solving nonsymmetric and indefinite 
linear system \eqref{eq2.11} which arises from the Galerkin 
method \eqref{eq2.10} as an approximation of the variational 
problem \eqref{eq2.1}.  As expected, our nonsymmetric and indefinite
Schwarz framework and theory are natural extensions of the well-known
SPD Schwarz framework and theory which were nicely described 
in \cite{DW90,Xu92,SBG96,QV99,TW05}.

\section{An abstract Schwarz framework for nonsymmetric 
and indefinite problems} \label{sec-3}

For the sake of notational brevity, throughout the remainder of this paper we shall 
suppress the sub-index $n$ in the discrete spaces $V_n$ and $W_n$
and in discrete functions $u_n$, $v_n$ and $w_n$.  In other words, $V$ and 
$W$ are used to denote $V_n$ and $W_n$, and $u$, $v$ and $w$ are used
to denote $u_n$, $v_n$ and $w_n$.
In addition, we shall make an effort below
to use the same or similar terminologies, as well as space and norm notation 
as those in \cite{TW05} for the symmetric and positive definite (SPD)
Schwarz framework and theory. We shall also make
comments about notation and terminologies which have no
SPD counterparts and try to make links between the well
known SPD Schwarz framework and theory and our nonsymmetric
and indefinite Schwarz framework and theory.

To motivate, we recall that in the SPD Schwarz framework and theory
\cite{DW90,Xu92,SBG96,QV99,TW05}, since $V=W$ and the discrete bilinear 
form $a(\cdot,\cdot)$ is symmetric and strongly coercive, 
$\sqrt{a(v,v)}$ defines a convenient norm (which is also 
equivalent to the $\|\cdot\|_{V}$-norm) on the space $V$ (as well as 
on its subspaces). This bilinear form induced norm  
plays a vital role in the SPD Schwarz framework and theory. 

Unfortunately, without the symmetry and strong coercivity assumptions
on $a(\cdot,\cdot)$, $\sqrt{a(v,v)}$ is not a norm anymore when $V=W$.  
It is not even well defined if $V\neq W$! To overcome this 
difficulty, the existing nonsymmetric and indefinite Schwarz framework
and theory (cf. \cite{CW92,TW05}), which only deal with the case $V=W$,
assume that $a(\cdot,\cdot)$ has a decomposition 
$
a(\cdot,\cdot)=a_0(\cdot,\cdot) + a_1(\cdot,\cdot),
$
where $a_0(\cdot,\cdot)$ is assumed to be symmetric and strongly coercive 
(i.e., it is SPD) and $a_1(\cdot,\cdot)$ is a perturbation 
of $a_0(\cdot,\cdot)$.  In this setting $a_0(\cdot,\cdot)$ 
then induces an equivalent (to $\|\cdot\|_{V}$) norm 
$\sqrt{a_0(v,v)}$ and one then works with this norm as in the SPD case. 
Unfortunately and understandably, such a setting requires that 
$a_1(\cdot,\cdot)$ is a {\em small} perturbation of $a_0(\cdot,\cdot)$, 
which is why the existing nonsymmetric 
and indefinite Schwarz framework and theory only apply to ``nearly" 
SPD problems.  Hence, it leaves more interesting and more difficult 
nonsymmetric and indefinite problems unresolved.

\subsection{Main assumptions and main idea}\label{sec-3.2}
To develop a new Schwarz framework and theory for general nonsymmetric
and indefinite problems, our only assumptions on the discrete problem
\eqref{eq2.10} are those stated in the well-posedness  
Theorem \ref{discrete_babuska}. We now restate 
those assumptions on the discrete bilinear form $a(\cdot,\cdot)$ 
and its adjoint $a^*(\cdot,\cdot)$ using the new function and 
space notation (i.e., after suppressing the sub-index $n$) as follows:

\medskip
\noindent
{\bf Main Assumptions}
\smallskip

\begin{itemize}
\item[(MA$_1$)] {\em Continuity} There exists a positive constant $C_a$ such that
\begin{align}\label{eq3.1}
|a(v,w)| \leq C_a  \|v\|_{V} \|w\|_{W} \qquad\forall v\in V, \, w\in W.
\end{align}

\item[(MA$_2$)] {\em Weak coercivity} There exists positive constants $\gamma_a$, $\beta_a$ such that
\begin{alignat}{2}\label{eq3.2}
\sup_{w\in W} \frac{a(v,w)}{\|w\|_{W}}
&\geq \gamma_a \|v\|_{V} &&\qquad\forall v\in V,\\
\sup_{v\in V} \frac{a(v,w)}{\|v\|_{V}}
&\geq \beta_a \|w\|_{W} &&\qquad\forall w\in W. \label{eq3.3}
\end{alignat}
\end{itemize}

\begin{remark}\label{rem3.1}
(a) Since $a^*(w,v)=a(v,w)$, then the continuity condition \eqref{eq3.1} 
is equivalent to 
\begin{align}\label{eq3.4}
|a^*(w,v)| \leq C_a \|w\|_{W} \|v\|_{V} \qquad\forall w\in W,\, v\in V,
\end{align}
and \eqref{eq3.3} is equivalent to
\begin{align}\label{eq3.5}
\sup_{v\in V} \frac{a^*(w,v)}{\|v\|_{V}}
\geq \beta_a \|w\|_{W} \qquad\forall w\in W. 
\end{align}

(b) Assumptions (MA$_1$) and (MA$_2$) impose some 
restrictions on the underlying Galerkin method \eqref{eq2.10}. But as we 
noted in Remark \ref{rem2.2}, these are some ``minimum"  
conditions for the Galerkin method to be practically useful. 
From that point of view, (MA$_1$) and (MA$_2$) are not restrictions at all.
\end{remark}

As it was pointed out in the previous subsection, for a general 
nonsymmetric and indefinite problem, since the discrete bilinear 
form $a(\cdot,\cdot)$ is not strongly coercive, then $a(v,v)$ is not  
a norm anymore.  In fact, $a(v,v)$ may not even be defined if $V\neq W$.
So a crucial question is what norms (if any) would $a(\cdot,\cdot)$  
induce on $V$ and $W$ which are equivalent to $\|\cdot\|_V$ 
and $\|\cdot\|_W$. It turns out that  
$a(\cdot,\cdot)$ does induce equivalent norms on both $V$ and $W$,
and these norms are hidden in the weak coercivity conditions 
\eqref{eq3.2} and \eqref{eq3.3}! This key observation
leads to the main idea of this paper; that is, we define 
the following weak coercivity induced norms:
\begin{alignat}{2}\label{eq3.6}
\|v\|_a &:=\sup_{w\in W} \frac{a(v,w)}{\normW{w}} &&\qquad\forall v\in V,\\ 
\|w\|_{a^*} &:=\sup_{v\in V} \frac{a^*(w,v)}{\normV{v}} &&\qquad\forall w\in W. 
\label{eq3.7}
\end{alignat}

Assumptions (MA$_1$) and (MA$_2$) immediately infer the following
norm equivalence result. Since its proof is trivial, we omit it.

\begin{lemma}\label{lem3.1}
The following inequalities hold:
\begin{alignat}{2}\label{eq3.8}
\gamma_a \normV{v} &\leq \|v\|_a \leq C_a \normV{v}  &&\qquad\forall v\in V,\\ 
\beta_a \normW{w} &\leq \|w\|_{a^*} \leq C_a \normW{w} &&\qquad\forall w\in W.
\label{eq3.9}
\end{alignat}
\end{lemma}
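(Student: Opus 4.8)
The plan is to read off all four inequalities directly from the definitions \eqref{eq3.6}--\eqref{eq3.7} and the Main Assumptions, with no auxiliary construction needed. First I would establish \eqref{eq3.8}. The lower bound $\gamma_a\normV{v}\le\|v\|_a$ is nothing more than the weak coercivity hypothesis \eqref{eq3.2} rewritten using the definition \eqref{eq3.6} of $\|v\|_a$. For the upper bound I would invoke continuity \eqref{eq3.1}: for every nonzero $w\in W$ one has $a(v,w)/\normW{w}\le |a(v,w)|/\normW{w}\le C_a\normV{v}$, and since the right-hand side is independent of $w$, taking the supremum over $w$ yields $\|v\|_a\le C_a\normV{v}$.

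Next I would prove \eqref{eq3.9} in the same spirit, but working with the adjoint. The key bookkeeping step is the identity $a^*(w,v)=a(v,w)$, which lets me translate the two equivalent forms recorded in Remark \ref{rem3.1}. The lower bound $\beta_a\normW{w}\le\|w\|_{a^*}$ is exactly \eqref{eq3.5} (equivalently \eqref{eq3.3}) read through the definition \eqref{eq3.7} of $\|w\|_{a^*}$, while the upper bound $\|w\|_{a^*}\le C_a\normW{w}$ follows from the adjoint continuity \eqref{eq3.4} by dividing by $\normV{v}$ and passing to the supremum over $v$.

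The only point requiring the slightest care is that each supremum is implicitly taken over nonzero elements, so that the divisions by $\normW{w}$ and $\normV{v}$ are legitimate; the continuity estimates then pass cleanly to the supremum precisely because the resulting bounds $C_a\normV{v}$ and $C_a\normW{w}$ do not depend on the element over which the supremum is formed. There is no genuine obstacle here---this is exactly why the authors call the proof trivial---and no norm axioms need be checked at this stage: the stated inequalities already establish that $\|\cdot\|_a$ and $\|\cdot\|_{a^*}$ are equivalent to the genuine norms $\normV{\cdot}$ and $\normW{\cdot}$, from which the norm properties would transfer automatically should they be needed later.
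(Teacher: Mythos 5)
Your proof is correct and is exactly the argument the paper has in mind: the lower bounds in \eqref{eq3.8}--\eqref{eq3.9} are the weak coercivity conditions \eqref{eq3.2} and \eqref{eq3.3} restated via the definitions \eqref{eq3.6}--\eqref{eq3.7}, and the upper bounds follow from continuity \eqref{eq3.1} (equivalently \eqref{eq3.4}) after dividing and taking suprema. The paper omits the proof as trivial, and your write-up supplies precisely that trivial verification, including the appropriate care about suprema over nonzero elements.
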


We conclude this subsection by noting that the variational 
setting laid out so far is a Banach space setting. No Hilbert
space structure is required for the space $V$ and $W$.  This is
not only mathematically interesting but also practically valuable 
because for some PDE application problems it is 
imperative to work in a Banach space setting. We also note that
if $V=W$ and $a(\cdot,\cdot)$ is SPD (i.e., it is symmetric and
strongly coercive), then $\|v\|_a= \|v\|_{a^*} = \sqrt{a(v,v)}$.
Hence, we recover the standard bilinear form induced norm!

\subsection{Space decomposition and local solvers} \label{sec-3.3}
It is well known \cite{DW90,Xu92,SBG96,XZ02,TW05} that Schwarz 
domain decomposition methods can be presented abstractly in the 
framework of the space decomposition method.  In particular, the physical domain 
decomposition provides a practical and effective way to construct 
the required space decomposition and local solvers in the method.
To some extent, the space decomposition method to the Schwarz
domain decomposition method is what the LU factorization is to the 
classical Gaussian elimination method.

Like in the SPD Schwarz framework (cf. \cite{TW05}), there are 
two essential ingredients in our nonsymmetric and 
indefinite Schwarz framework, namely, (i) {\em construction of a 
pair of ``compatible" space decompositions for $V$ and $W$} and 
(ii) {\em construction of a local solver (or local discrete bilinear 
form) on each pair of local spaces}. However, there is an obvious and 
crucial difference between the SPD Schwarz framework and our 
nonsymmetric and indefinite Schwarz framework.  When
$V\neq W$, our framework requires space decompositions 
for both spaces $V$ and $W$, and these 
two space decompositions must be chosen compatibly in the 
sense to be described below.
 
Let 
\[
V_j\subset X,\qquad W_j\subset X \qquad\mbox{for } j=0,1,2,\cdots, J,
\] 
be two sets of reflexive Banach spaces with norms 
$\normVj{\cdot}$ and $\normWj{\cdot}$ respectively.  We note that 
$V_0$ and $W_0$ are used to denote the so-called {\em coarse spaces} 
in the domain decomposition context.  For $j=0,1,2,\cdots, J$, let 
\[
\cR_j^\dag: V_j\to V, \qquad \cS_j^\dag:W_j\to W 
\]
denote some {\em prolongation operators}. 

\begin{remark}\label{rem3.3}
In the Schwarz method literature (cf. \cite{TW05,SBG96,Xu92}), 
$R_j^T$ is often used to denote both the prolongation operator
from $V_j$ to $V$ and its matrix representation.  Such a choice 
of notation is due to the fact that the matrix representation of
the {\em not-explicitly-defined} restriction operator $R_j$ from
$V$ to $V_j$ is always chosen to be the transpose of the matrix 
representation of the prolongation operator. As expected, such a 
dual role notation may be confusing to some readers. To avoid such 
a potential confusion we use different notations for operators and 
their matrix representations throughout this paper.  

We also like to note that in the construction 
of all Schwarz methods the restriction operators/matrices  
are not ``primary" operators/matrices but ``derivative" 
operators/matrices in the sense that they are not chosen independently. 
Instead, they are determined by the prolongation 
operators/matrices. One often first defines the 
matrix representation of the (desired) restriction 
operator as the transpose of the the matrix representation 
of the prolongation operator and then defines the restriction
operator to be the unique linear operator which has the chosen 
matrix representation (under the same bases in which the prolongation 
matrix is obtained). This will also be the approach 
adopted in this paper for defining our restriction operators
(see Definition \ref{def3.2}).  Clearly, such a definition of
the restriction operators is not only abstract but 
also depends on the choices of the bases of the underlying 
function spaces. However, its simplicity and convenience at 
the matrix level, which are what really matter in practice,
make the definition appealing and favorable so far.
\end{remark}

Suppose that the following relations hold:
\begin{alignat}{2}\label{eq3.10}
\cR_j^\dag V_j &\subsetneq V,  &\qquad \cS_j^\dag W_j &\subsetneq W  
\quad\mbox{for } j=0,1,2,\cdots, J,\\
V &= \sum_{j=0}^J \cR_j^\dag V_j, &\quad 
W &= \sum_{j=0}^J \cS_j^\dag W_j, \label{eq3.11}
\end{alignat}
where $\cR_j^\dag V_j$ and $\cS_j^\dag W_j$ stand for the ranges of the 
linear operators $\cR_j^\dag$ and $\cS_j^\dag$ respectively.

Associated with each pair of local spaces $(V_j, W_j)$ for $j=0,1,2\cdots,J$, 
we introduce a local discrete bilinear form $a_j(\cdot, \cdot)$ defined 
on $V_j\times W_j$, which can be taken either as the restriction of 
global discrete bilinear form $a(\cdot, \cdot)$ on $V_j\times W_j$
or as some approximation of the restriction of $a(\cdot, \cdot)$ 
on $V_j\times W_j$. We call these two choices of local discrete bilinear 
form $a_j(\cdot, \cdot)$ an {\em exact local solver} and
an {\em inexact local solver} respectively. After the local discrete
bilinear forms are chosen, we can define what constitutes as
a {\em compatible space decomposition}. 

\begin{definition}\label{def3.1}
(i) A pair of spaces $V_j$ and $W_j$ are said to be compatible with
respect to $a_j(\cdot,\cdot)$ if they satisfy the following conditions:

\smallskip
\begin{itemize}
\item[{\rm (LA$_1$)}] {\em Local continuity}. There exists a positive 
constant $C_{a_j}$ such that
\begin{align}\label{eq3.14}
|a_j(v,w)| \leq C_{a_j} \normVj{v} \normWj{w} \qquad\forall v\in V_j, 
\, w\in W_j.
\end{align}

\item[{\rm (LA$_2$)}] {\em Local weak coercivity}. There exist 
positive constants $\gamma_{a_j}$ and $\beta_{a_j}$ such that
\begin{alignat}{2}\label{eq3.15}
\sup_{w\in W_j} \frac{a_j(v,w)}{\normWj{w}}
&\geq \gamma_{a_j} \normVj{v} &&\qquad\forall v\in V_j,\\
\sup_{v\in V_j} \frac{a_j(v,w)}{\normVj{v}}
&\geq \beta_{a_j} \normWj{w} &&\qquad\forall w\in W_j. \label{eq3.16}
\end{alignat}
\end{itemize}

(ii) A pair of space decompositions $\{V_j\}_{j=0}^J$ and $\{W_j\}_{j=0}^J$
of $V$ and $W$ satisfying \eqref{eq3.10}--\eqref{eq3.11}
are said to be compatible if each pair of $V_j$ and $W_j$ 
is compatible with respect to $a_j(\cdot,\cdot)$ 
for $j=0,1,2,\cdots,J$.
\end{definition}

Obviously conditions (LA$_1$) and (LA$_2$) on $a_j(\cdot, \cdot)$
are the analogies of (MA$_1$) and (MA$_2$) on $a(\cdot, \cdot)$. 
By Theorem \ref{discrete_babuska}, these conditions guarantee 
that the local problem of seeking $u_j\in V_j$ such that
\begin{equation}\label{eq3.17}
a_j(u_j,w_j)=f_j(w_j) \qquad \qquad \forall \ w_j \in W_j
\end{equation}
is uniquely solvable for any given bounded linear functional 
$f_j$ on $W_j$. Moreover, (LA$_1$) and (LA$_2$) are ``minimum" conditions for achieving 
such a guaranteed unique solvability (cf. Remark \ref{rem2.2}).
Furthermore, like its global counterpart, the local weak coercivity 
condition (LA$_2$) induces the following two equivalent 
norms on $V_j$ and $W_j$: 
\begin{alignat}{2}\label{eq3.18}
\|v\|_{a_j} &:=\sup_{w\in W_j} \frac{a_j(v,w)}{\normWj{w}} 
&&\qquad\forall v\in V_j,\\ 
\|w\|_{a_j^*} &:=\sup_{v\in V_j} \frac{a_j^*(w,v)}{\normVj{v}} 
&&\qquad\forall w\in W_j, \label{eq3.19}
\end{alignat}
where $a_j^*(w,v):=a_j(v,w)$ for any $(v,w)\in V_j\times W_j$.

Trivially, we have 

\begin{lemma}\label{lem3.2}
Suppose that $V_j$ and $W_j$ are compatible with respect to
$a_j(\cdot, \cdot)$. Then the following inequalities hold:
\begin{alignat}{2}\label{eq3.20}
\gamma_{a_j} \normVj{v} &\leq \|v\|_{a_j} 
\leq C_{a_j} \normVj{v}  &&\qquad\forall v\in V_j,\\ 
\beta_{a_j} \normWj{w} &\leq \|w\|_{a_j^*} 
\leq C_{a_j} \normWj{w} &&\qquad\forall w\in W_j.  \label{eq3.21}
\end{alignat}
\end{lemma}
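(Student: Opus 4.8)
The plan is to prove the four inequalities in \eqref{eq3.20}--\eqref{eq3.21} separately, exploiting the fact that the two lower bounds are simply restatements of local weak coercivity (LA$_2$), while the two upper bounds follow at once from local continuity (LA$_1$). This is the exact local counterpart of Lemma \ref{lem3.1}, and the argument is identical.

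First I would treat the chain \eqref{eq3.20}. The lower bound $\gamma_{a_j}\normVj{v}\leq\|v\|_{a_j}$ is immediate, since by definition \eqref{eq3.18} the quantity $\|v\|_{a_j}$ is exactly the supremum bounded below in \eqref{eq3.15}. For the upper bound I would fix $v\in V_j$ and estimate each quotient in the defining supremum: for any nonzero $w\in W_j$, assumption (LA$_1$) gives $a_j(v,w)\leq|a_j(v,w)|\leq C_{a_j}\normVj{v}\normWj{w}$, so dividing by $\normWj{w}$ and taking the supremum over $w$ yields $\|v\|_{a_j}\leq C_{a_j}\normVj{v}$.

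The chain \eqref{eq3.21} is entirely parallel once the adjoint is unwound. Because $a_j^*(w,v)=a_j(v,w)$, definition \eqref{eq3.19} makes $\|w\|_{a_j^*}$ coincide with the supremum bounded below in \eqref{eq3.16}, so the lower bound $\beta_{a_j}\normWj{w}\leq\|w\|_{a_j^*}$ is again just (LA$_2$). The upper bound follows by the same continuity estimate as before, now dividing by $\normVj{v}$ and taking the supremum over $v\in V_j$.

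There is essentially no obstacle here; the content is entirely the bookkeeping of matching each supremum-based norm against the appropriate half of the local continuity (LA$_1$) and local weak coercivity (LA$_2$) assumptions. The only point worth a moment's attention is the adjoint identity $a_j^*(w,v)=a_j(v,w)$, which is precisely what lets the defining supremum of $\|\cdot\|_{a_j^*}$ be identified with the left-hand side of \eqref{eq3.16}, so that no separate estimate for $a_j^*$ is required.
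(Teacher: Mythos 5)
Your proof is correct and is exactly the argument the paper has in mind: the paper omits the proof of Lemma \ref{lem3.2} as trivial, precisely because the lower bounds are restatements of (LA$_2$) via the definitions \eqref{eq3.18}--\eqref{eq3.19}, and the upper bounds follow from (LA$_1$) by bounding each quotient and taking the supremum, just as you did. Nothing is missing.
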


\subsection{Additive Schwarz method} \label{sec-3.4}
Throughout this section, we assume that we 
are given a global discrete problem \eqref{eq2.10}, and 
the global discrete bilinear form $a(\cdot, \cdot)$ fulfills
the main assumptions (MA$_1$) and (MA$_2$) so that problem \eqref{eq2.10}
has a unique solution $u\in V$. In addition, we assume we are given 
a pair of space decompositions $\{V_j\}_{j=0}^J$ and $\{W_j\}_{j=0}^J$
of $V$ and $W$, the prolongation operators
$\{\cR_j^\dag\}_{j=0}^J$ and $\{\cS_j^\dag\}_{j=0}^J$, 
and the local discrete bilinear forms 
$\{a_j(\cdot, \cdot)\}_{j=0}^J$ such that the given space 
decompositions are compatible with respect to the given local 
discrete bilinear forms in the sense of Definition \ref{def3.1}. 
Our goal in this subsection is to construct 
the additive Schwarz method for problem \eqref{eq2.10} using the 
given information. 

To continue, we now introduce two sets of {\em projection-like} 
operators $\ctP_j: V\to V_j$ and $\ctQ_j: W\to W_j$ for $j=0,1,2,
\cdots,J$.  These projection-like-operators will serve as the building blocks for the 
constructions of both our additive and multiplicative 
Schwarz methods.  For any fixed $v\in V$ and $w\in W$, 
define $\ctP_j v\in V_j$ and $\ctQ_j w\in W_j$ by
\begin{alignat}{2}\label{eq3.22}
a_j\bigl(\ctP_j v,w_j\bigr) &:= a\bigl(v, \cS_j^\dag w_j\bigr)
&&\qquad\forall w_j\in W_j,\\ 
a_j^*\bigl(\ctQ_j w,v_j\bigr) &:= a^*\bigl(w, \cR_j^\dag v_j\bigr)
&&\qquad\forall v_j\in V_j. \label{eq3.23}
\end{alignat}
We recall that $a_j^*(w_j,v_j)=a_j(v_j,w_j)$ for all $v_j\in V_j$ 
and $w_j\in W_j$. We also note that since $V_j$ and $W_j$ are 
assumed to be compatible, Theorem \ref{discrete_babuska} then ensures 
both $\ctP_j$ and $\ctQ_j$ are well defined for $j=0,1,\cdots, J$. 

Since $V_j$ and $W_j$ may not be subspaces of $V$ and $W$, 
$\ctP_j v$ and $\ctQ_j w$ may not belong to $V$ and $W$.
To pull them back to the global discrete spaces $V$ and $W$, we 
appeal to the {\em prolongation operators} $\cR_j^\dag$ and $\cS_j^\dag$
for help. Define the composite operators 
\begin{align}\label{eq3.24}
\cP_j:=\cR_j^\dag\circ \ctP_j,\qquad \cQ_j:=\cS_j^\dag\circ \ctQ_j 
\qquad\mbox{for } j=0,1,2, \cdots,J.
\end{align}
Trivially, we have $\cP_j: V\to V$ and $\cQ_j: W\to W$ for $j=0,1,2, \cdots,J$.

We now are ready to define the following additive Schwarz operators. 
Following \cite{DW90,Xu92,SBG96,TW05} we define
\begin{align}\label{eq3.25}
\Pad &:= \cP_0+\cP_1+\cP_2+\cdots + \cP_J,\\
\Qad &:= \cQ_0+\cQ_1+\cQ_2 +\cdots +\cQ_J. \label{eq3.26}
\end{align}

The matrix interpretation of the additive operator $\Pad$ 
is similar to but slightly more complicated than the one in the 
SPD Schwarz framework. In particular, 
the additive operator $\Qad$ does not exist in the 
the SPD framework. For the reader's convenience, we give below a 
brief matrix interpretation for both $\Pad$ and $\Qad$.

Fixing a basis for each of $V, W, V_j$ and $W_j$, let $A$ and $A_j$ denote 
respectively the global and local stiffness matrices of the 
bilinear forms $a(\cdot,\cdot)$ and $a_j(\cdot,\cdot)$ with respect to
the given bases. Let $R_j^\dag, S_j^\dag, \tP_j, \tQ_j, P_j, Q_j,
P_{\mbox{\tiny ad}}$ and $Q_{\mbox{\tiny ad}}$ denote the matrix 
representations of 
the linear operators $\cR_j^\dag,\cS_j^\dag,\ctP_j,\ctQ_j,
\cP_j,\cQ_j, \Pad$ and $\Qad$ with respect to the given bases. Lastly, 
let $A^T, A_j^T, R_j^{\dag T}$ and $S_j^{\dag T}$ denote 
the matrix transposes of $A, A_j, R_j^\dag$ and $S_j^\dag$.    

Using the above notation and the well-known fact that composite linear 
operators are represented by matrix multiplications, we obtain from
\eqref{eq3.22} and \eqref{eq3.23} that
\begin{alignat}{2}\label{eq3.27}
A_j \tP_j \bv &:= S_j^{\dag T} A\bv &&\qquad\forall \bv\in \bR^n,\\ 
A_j^T \tQ_j\bw &:= R_j^{\dag T} A^T\bw &&\qquad\forall \bw\in \bR^n. 
\label{eq3.28}
\end{alignat}
Thus,
\begin{alignat}{2}\label{eq3.29}
\tP_j &= A_j^{-1} S_j^{\dag T} A, &\qquad 
P_j &= R_j^\dag A_j^{-1} S_j^{\dag T} A, \\
\tQ_j &= A_j^{-T} R_j^{\dag T} A^T, &\qquad 
Q_j &= S_j^\dag A_j^{-T} R_j^{\dag T} A^T, \label{eq3.30}
\end{alignat}
where $A_j^{-1}$ and $A_j^{-T}$ denote the inverse matrices of $A_j$ and 
$A_j^T$, respectively. We also note that the compatibility 
assumptions (LA$_1$) and (LA$_2$) imply that $A_j^{-1}$ and $A_j^{-T}$
do exist.

Finally, it follows from \eqref{eq3.25}, \eqref{eq3.26}, \eqref{eq3.29} and
\eqref{eq3.30} that 
\begin{align}\label{eq3.31}
P_{\mbox{\tiny ad}} & =  R_0^\dag A_0^{-1} S_0^{\dag T} A
+\sum_{j=1}^J  R_j^\dag A_j^{-1} S_j^{\dag T} A
\\
Q_{\mbox{\tiny ad}} & = S_0^\dag A_0^{-T} R_0^{\dag T} A^T
+\sum_{j=1}^J S_j^\dag A_j^{-T} R_j^{\dag T} A^T.
\label{eq3.32}
\end{align}

From the above expressions we obtain the following two additive Schwarz 
preconditioners for both $A$ and its transpose $A^T$:
\begin{align}\label{eq3.33}
B &:= R_0^\dag A_0^{-1} S_0^{\dag T}
+\sum_{j=1}^J  R_j^\dag A_j^{-1} S_j^{\dag T},\\
\quad B^\dag &:= S_0^\dag A_0^{-T} R_0^{\dag T}
+\sum_{j=1}^J S_j^\dag A_j^{-T} R_j^{\dag T}. \label{eq3.33a}
\end{align}
It is interesting to note that $B^\dag= B^T$ which means 
that the nonsymmetric Schwarz preconditioner $B$ can be used 
to precondition both the linear system \eqref{eq2.10} and its 
adjoint system without any additional cost. 

As it was already alluded to in Remark \ref{rem3.3}, we now formally define
our restriction operators $\{\cR_j\}$ and $\{\cS_j\}$.

\begin{definition}\label{def3.2}
For $j=0,1,2,\cdots,J$, let $\cR_j: V\to V_j$ 
(resp. $\cS_j: W\to W_j$) be the unique linear operator 
whose matrix representation is given by $S_j^{\dag T}$ 
(resp. $R_j^{\dag T}$) under the same bases of $V,W,V_j$ and $W_j$ 
in which $R_j^{\dag T}$ and $S_j^{\dag T}$ are obtained.
\end{definition}

By the design, the matrix representations $R_j$ and $S_j$
of $\cR_j$ and $\cS_j$ satisfy $R_j=S_j^{\dag T}$ and 
$S_j=R_j^{\dag T}$.

\subsection{Multiplicative Schwarz method} \label{sec-3.5}
The multiplicative Schwarz methods for solving problem 
\eqref{eq2.10} refer to various generalizations
of the original Schwarz alternating iterative method 
(cf. \cite{BPWX91,Xu92}). However, they also can be 
formulated as linear iterations on some preconditioned 
systems (cf. \cite{TW05}). In this paper we adopt the 
latter point of view to present our nonsymmetric and 
indefinite multiplicative Schwarz methods. We shall 
use the same notation as in Section \ref{sec-3.4}.

We first introduce the following two so-called {\em error propagation 
operators}:
\begin{align}\label{eq3.34}
\Emu &:=(\cI-\cP_J)\circ(\cI-\cP_{J-1})\circ \cdots \circ (\cI-\cP_0),\\
\Esy &:=(\cI-\cP_0)\circ (\cI-\cP_1)\circ\cdots\circ (\cI-\cP_J)\circ
(\cI-\cP_{J})\circ \cdots \circ (\cI-\cP_0). \label{3.35} 
\end{align}
where $\cI$ denotes the identity operator on $V$ or on $W$.
We then define the following two ``preconditioned" operators:
\begin{align}\label{eq3.36}
\Pmu :=\cI-\Emu, \qquad\quad
\Psy:=\cI-\Esy.
\end{align}

It is easy to check that the algebraic matrix representations
of the above operators are, respectively, 
\begin{align}\label{eq3.37}
E_{\mbox{\tiny mu}} &:=(I-P_J)(I-P_{J-1})\cdots (I-P_0),\\
E_{\mbox{\tiny sy}} &:=(I-P_0)(I-P_1)\cdots (I-P_J) 
(I-P_{J})\cdots(I-P_1) (I-P_0), \label{3.38}\\
P_{\mbox{\tiny mu}} &:=I-E_{\mbox{\tiny mu}}, \label{3.39} \\
P_{\mbox{\tiny sy}} &:=I- E_{\mbox{\tiny sy}}. \label{3.40}
\end{align}
Then our multiplicative Schwarz iterative methods are defined as 
\begin{align}\label{eq3.41}
\bu^{(k+1)} = (I-C) \bu^{(k)} +\bg 
=E \bu^{(k)} +\bg, \qquad k\geq 0
\end{align}
where $(C,E)$ are either $(P_{\mbox{\tiny mu}}, E_{\mbox{\tiny mu}})$ 
or $(P_{\mbox{\tiny sy}}, E_{\mbox{\tiny sy}})$,
and $\bg$ takes either $\bg_{\mbox{\tiny mu}}\in \bR^n$ or 
$\bg_{\mbox{\tiny sy}}\in \bR^n$ which are easily computable 
from $\bbf$ in \eqref{eq2.11}.

\begin{remark}\label{rem3.2}
(a) Clearly, the case with the triple $(P_{\mbox{\tiny mu}}, E_{\mbox{\tiny mu}},
\bg_{\mbox{\tiny mu}})$ corresponds to the classical multiplicative 
Schwarz method for \eqref{eq2.11} (cf. \cite{BPWX91}).

(b) The case with the triple $(P_{\mbox{\tiny sy}}, E_{\mbox{\tiny sy}}, 
\bg_{\mbox{\tiny sy}})$ can be regarded as a ``symmetrized" 
multiplicative Schwarz method for nonsymmetric and indefinite 
problems. However, we note that the operator 
$\cE_{\mbox{\tiny sy}}$ and matrix $E_{\mbox{\tiny sy}}$ are not 
symmetric in general because $\{\cP_j\}$ and $\{P_j\}$ may not be
symmetric.


(c) Unlike in the SPD case, the norm $\|\Emu\|_a$ could be larger than $1$
for convection-dominant problems as shown by the numerical tests given 
in Section \ref{sec-5} although the multiplicative Schwarz method 
appears to be convergent in all those tests. Consequently, the 
convergent behavior of the multiplicative Schwarz method presented  
above is more complicated than its SPD counterpart.

\end{remark}

\subsection{A hybrid Schwarz method} \label{sec-3.6}
In this subsection, we consider a hybrid Schwarz method which combines the  
additive Schwarz idea (between subdomains) and the multiplicative Schwarz
idea (between levels). The hybrid method is expected to take advantage
of both additive and multiplicative Schwarz methods.

The iteration operator of our hybrid Schwarz method is given by
\begin{eqnarray}\label{hy1}
\cE_{\mbox{\tiny hy}}&:=(\cI-\alpha \cP_0)(\cI- \widehat{\cP} ), 
\qquad\mbox{where}\quad \widehat{\cP}:=\sum_{j=1}^J \cP_j. \\
\cG_{\mbox{\tiny hy}}&:=(\cI-\alpha \cQ_0)(\cI- \widehat{\cQ} ), 
\qquad\mbox{where}\quad \widehat{\cQ}:=\sum_{j=1}^J \cQ_j. \label{hyb2}
\end{eqnarray}
Thus, the ``preconditioned"  hybrid Schwarz operator has the following form:
\begin{eqnarray}\label{hyb3}
\cP_{\mbox{\tiny hy}}&:=\cI-\cE_{\mbox{\tiny hy}}=\alpha \cP_0 
+ (\cI-\alpha \cP_0) \widehat{\cP}.
\\
\cQ_{\mbox{\tiny hy}}&:=\cI-\cG_{\mbox{\tiny hy}}=\alpha \cQ_0 
+ (\cI-\alpha \cQ_0) \widehat{\cQ}, \label{hyb4}
\end{eqnarray}
where $\alpha$, called a relaxation parameter,  is an undetermined
positive constant.

Since the corresponding matrix representations of $\cE_{\mbox{\tiny hy}},
\cP_{\mbox{\tiny hy}}, \cG_{\mbox{\tiny hy}}$, and $\cQ_{\mbox{\tiny hy}}$
are easy to write down, we omit them to save space.

\section{An abstract Schwarz convergence theory for nonsymmetric and
indefinite problems} \label{sec-4}
In this section we shall first establish condition number estimates for 
additive Schwarz operator $\Pad$ and for its matrix
representation $P_{\mbox{\tiny ad}}$.
We then present a condition number estimate for the hybrid 
operator $\cP_{\mbox{\tiny hy}}$.

\subsection{Structure assumptions}\label{sec-4.1}
Our convergence theory rests on the following Structure Assumptions:

\begin{itemize}
\item[{\rm (SA$_0$)}] {\em Compatibility assumption}. Assume that 
$\{(V_j,W_j)\}_{j=0}^J$ is a compatible decomposition of $(V, W)$
in the sense of Definition \ref{def3.1}. 

\item[{\rm (SA$_1$)}] {\em Energy stable decomposition assumption}. 
There exist positive constants $\CV$ and $\CW$ such 
that every pair $(v,w)\in V\times W$ admits a decomposition 
\[
v=\sum_{j=0}^J \cR_j^\dag v_j,\qquad\quad w=\sum_{j=0}^J \cS_j^\dag w_j,
\]
with $v_j\in V_j$ and $w_j\in W_j$ such that 
\begin{align}\label{eq4.1x}
\sum_{j=0}^J \|v_j\|_{a_j} &\leq \CV \|v\|_a, \\
\sum_{j=0}^J \normWj{w_j} &\leq \CW \normW{w}.  \label{eq4.1y}
\end{align}

\item[{\rm (SA$_2$)}] {\em Strengthened generalized Cauchy-Schwarz inequality 
assumption}. There exist constants $\theta_{ij}\in [0,1]$ for 
$i,j=0,1,2,\cdots,J$ such that 
\begin{align}\label{eq4.2}
a(\cR_i^\dag v_i, \cS_j^\dag w_j) &\leq \theta_{ij} \|\cR_i^\dag v_i\|_a  
\normW{\cS_j^\dag w_j} \qquad\forall v_i\in V_i,\, w_j\in W_j.
\end{align}

\item[{\rm (SA$_3$)}] {\em Local stability assumption}. There exist 
positive constants $\omeV$ and $\omeW$ such that for $j=0,1,2,\cdots, J$
\begin{alignat}{2}\label{eq4.3x}
\|\cR_j^\dag v_j\|_a &\leq \omeV \|v_j\|_{a_j}  &&\qquad \forall v_j\in V_j,\\
\normW{\cS_j^\dag w_j} &\leq \omeW \normWj{w_j} &&\qquad \forall w_j\in W_j.
\label{eq4.3y}
\end{alignat}

\item[{\rm (SA$_4$)}] {\em Approximability assumption}. There exist 
(small) positive constants $\deltaV, \widehat{\delta}_V,  \deltaW$ and 
$\widehat{\delta}_W$ such that for $i=0,1,2,\cdots,J$ and $j = 1,2, \cdots, J$
\begin{alignat}{2}\label{eq4.4x}
\|\ctP_j \bigl( v-\cP_i v\bigr) \|_{a_j}
&\leq \theta_{ij} \deltaV  \|v\|_a  &&\qquad\forall v\in V, \\
\| \ctP_0 \bigl( v - \cP_0 v \bigr) \|_{a_0} & \leq \deltaV \| v \|_a && \qquad \forall v \in V, \\
 \| \ctP_0 \bigl( v - \widehat{\cP} v \bigr) \|_{a_0}
&\leq \widehat{\delta}_V \| v \|_a && \qquad \forall v \in V, \\
\normWj{\ctQ_j \bigl( w-\cQ_i w\bigr)}
&\leq \theta_{ij} \deltaW  \normW{w}  &&\qquad\forall w\in W, \label{eq4.4y} \\
\normWnot{\ctQ_0 \bigl( w - \cQ_0 w \bigr)} & \leq \deltaW \normW{ w } 
&& \qquad \forall w \in W, \\
\normWnot{\ctQ_0 \bigl( w - \widehat{\cQ} w \bigr)} 
& \leq \widehat{\delta}_W \normW{w} && \qquad \forall w \in W,
%
\end{alignat}
where $\theta_{ij}$ are the same as in (SA$_2$).
$\widehat{\cP} := \sum_{i = 1}^{J} \cP_i$ and
$\widehat{\cQ} := \sum_{i = 1}^{J} \cQ_i$.
\end{itemize}

We now explain the rationale and motivation of each assumption listed above.

\begin{remark}\label{rem4.1}
(a) We note that $\|\cdot\|_a$ and $\|\cdot\|_{a^*}$ are defined in
\eqref{eq3.6} and \eqref{eq3.7}, and $\|\cdot\|_{a_j}$ and $\|\cdot\|_{a_j^*}$
are defined in \eqref{eq3.18} and \eqref{eq3.19}.

(b) In the SPD Schwarz theory (cf. \cite{TW05}),  the local bilinear forms 
are always assumed to be strongly coercive which then infers the invertibilty 
of the local stiffness matrices. However, such an assumption is often 
not listed explicitly. On the other hand, in our nonsymmetric and indefinite
Schwarz theory, the invertibilty of the local stiffness matrices may
not hold.  We explicitly list it as an assumption in (SA$_0$).

(c) For a given compatible pair of space decompositions $\{(V_j,W_j)\}_{j=0}^J$, 
decompositions of each function $v\in V$ and $w\in W$ may not be unique.
Assumption (SA$_1$) assumes that there exists at least one decomposition
which is energy stable for every function in $V$ and $W$. It
imposes a constraint on both the choice of the space decompositions
$\{(V_j,W_j)\}_{j=0}^J$ and on the choice of the local bilinear 
forms $\{a_j(\cdot,\cdot)\}_{j=0}^J$.

(d) We note that different norms are used for two functions 
on the right-hand side of \eqref{eq4.2}, and $\theta_{ij}$ is 
defined for $i,j=0,1,2,\cdots,J$. We set $\Theta=[\theta_{ij}]_{i,j=0}^J$  
and note that $\Theta$ is a $(J+1)\times (J+1)$ matrix.
We shall also use the submatrix $\widehat{\Theta}:=\Theta(1:J,1:J)$ 
in our convergence analysis to be given in Section \ref{sec-4}.
Since the bilinear form $a(\cdot,\cdot)$ is not an inner product, the 
standard Cauchy-Schwarz inequality does not hold in general. But it does hold 
in this generalized sense with $\theta_{ij}=1$, see Lemma \ref{lem4.1}. 
Moreover, we expect that each pair $(V_j,W_j)$ for $1\leq i,j\leq J$ only 
interacts with very few remaining pairs in the space
decomposition $\{(V_j,W_j)\}_{j=1}^J$.  Hence, the matrix
$\widehat{\Theta}$, which is symmetric, is expected to be 
sparse and nearly diagonal in most applications.
On the other hand, we expect that $\theta_{0j}=\theta_{i0}=1$ for
$i,j=1,2,\cdots, J$. 

(e) Local stability assumption (SA$_3$) imposes a condition on the choice 
of the prolongation operators $\cR_j^\dag$ and $\cS_j^\dag$. 
It requires that these operators are bounded operators.

(f) Assumption (SA$_4$), which does not appear in the SPD theory, 
imposes a local approximation condition on the projection-like 
operators $\{\ctP_j\}$ and $\{\ctQ_j\}$ and on the restriction 
operators $\{\cR_j\}$ and $\{\cS_j\}$. 


(g) Because of the norm equivalence properties \eqref{eq3.8}, \eqref{eq3.9},
\eqref{eq3.20} and \eqref{eq3.21}, one can easily replace the 
weak coercivity induced norms by their equivalent underlying space norms 
or vice versa in all assumptions (SA$_1$)--(SA$_4$). However, one 
must track all the constants resulting from the changes. The main 
reason for using the current forms of the assumptions is that they 
allow us to give a cleaner presentation of our nonsymmetric
and indefinite Schwarz convergence theory to be described below.
\end{remark}

\subsection{Condition number estimate for $\Pad$}\label{sec-4.2}

First, we state the following simple lemma.

\begin{lemma}\label{lem4.1}
The following generalized Cauchy-Schwarz inequalities hold:
\begin{alignat}{2}\label{eq4.5a}
a(v, w) &\leq \|v\|_a \normW{w}  &&\qquad\forall v\in V,\, w\in W,\\
a(v, w) &\leq \normV{v} \|w\|_{a^*}  &&\qquad\forall v\in V,\, w\in W, \label{eq4.5b}\\
a_j(v_j, w_j) &\leq \|v_j\|_{a_j} \normWj{w} &&\qquad\forall v_j\in V_j,\, 
w_j\in W_j,\,\, j=0,1,\cdots, J, \label{eq4.5c}\\
a_j(v_j, w_j) &\leq \normVj{v} \|w_j\|_{a_j^*}  &&\qquad\forall 
v_j\in V_j,\, w_j\in W_j, \,\, j=0,1,\cdots, J. \label{eq4.5d}
\end{alignat}
\end{lemma}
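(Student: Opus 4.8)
The plan is to read all four inequalities directly off the definitions of the weak coercivity induced norms in \eqref{eq3.6}--\eqref{eq3.7} and \eqref{eq3.18}--\eqref{eq3.19}, so that each inequality is nothing more than the elementary fact that every term of a supremum is bounded above by that supremum. No auxiliary construction or estimate is needed; the content is a pure unwinding of definitions.

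First I would establish \eqref{eq4.5a}. By definition \eqref{eq3.6}, $\|v\|_a=\sup_{w'\in W} a(v,w')/\normW{w'}$, so for any fixed $0\neq w\in W$ the single quotient $a(v,w)/\normW{w}$ is at most $\|v\|_a$; rearranging gives $a(v,w)\le \|v\|_a\,\normW{w}$, and when $w=0$ both sides vanish, so the bound holds for all $w\in W$. For \eqref{eq4.5b} I would pass to the adjoint: since $a^*(w,v)=a(v,w)$ and $\|w\|_{a^*}=\sup_{v'\in V} a^*(w,v')/\normV{v'}$ by \eqref{eq3.7}, the same ``supremum dominates each term'' argument, now with $v$ fixed, yields $a(v,w)=a^*(w,v)\le \|w\|_{a^*}\,\normV{v}$, which is exactly \eqref{eq4.5b}.

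The two local inequalities are proved verbatim the same way, replacing the global objects by their local counterparts: \eqref{eq4.5c} uses the definition \eqref{eq3.18} of $\|v_j\|_{a_j}$, while \eqref{eq4.5d} uses the definition \eqref{eq3.19} of $\|w_j\|_{a_j^*}$ together with the relation $a_j^*(w_j,v_j)=a_j(v_j,w_j)$. In each case one fixes the function appearing outside the supremum and bounds the relevant quotient by the supremum. There is essentially no obstacle here, which is why the statement is flagged as ``simple''; the only point deserving a word of care is the degenerate case in which the function in the denominator is zero, handled trivially by observing that both sides of the asserted inequality then equal zero.
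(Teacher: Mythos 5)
Your proof is correct and is exactly the paper's argument: the paper simply notes that \eqref{eq4.5a}--\eqref{eq4.5d} are immediate consequences of the definitions \eqref{eq3.6}, \eqref{eq3.7}, \eqref{eq3.18}, and \eqref{eq3.19}, which is precisely the ``supremum dominates each term'' unwinding you carried out. Your extra attention to the degenerate case $w=0$ is a harmless refinement the paper leaves implicit.
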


\begin{proof}
\eqref{eq4.5a}--\eqref{eq4.5d} are immediate consequences of the 
definitions of the norms $\|\cdot\|_a, \|\cdot\|_{a^*}$, $\|\cdot\|_{a_j}$
and $\|\cdot\|_{a_j^*}$. 
\end{proof}

\begin{lemma}\label{lem4.2}
Under assumptions (SA$_0$) and (SA$_3$), the following estimates hold: 
\begin{alignat}{2}\label{eq4.10a}
\|\ctP_j v\|_{a_j} &\leq \omeW \|v\|_a  &&\qquad \forall v\in V,\quad
j=0,1,\cdots,J. \\
\|\cP_j v\|_a &\leq \omeV \omeW \|v\|_a  &&\qquad \forall v\in V,\quad
j=0,1,\cdots,J. \label{eq4.10} \\
\|\ctQ_j w\|_{a^*_j} &\leq \omeV C_{a_j} \beta_a^{-1} \|w\|_{a^*}  
&&\qquad \forall w\in W,\quad j=0,1,\cdots,J. \label{eq4.10c}\\
\|\cQ_j w\|_{a^*} &\leq \omeV \omeW C_a C_{a_j} \beta_a^{-1} \beta^{-1}_{a_j} \|w\|_{a^*}  &&\qquad \forall w\in W,\quad
j=0,1,\cdots,J. \label{eq4.10d} \\
\normV{\cP_j v} &\leq \omeV \omeW C_a \gamma_a^{-1} \normV{v}  
&&\qquad \forall v\in V,\quad j=0,1,\cdots,J. \label{eq4.10e} \\
\normW{\cQ_j w} &\leq \omeV \omeW C_{a_j} \beta^{-1}_{a_j} 
\normW{w} &&\qquad \forall w\in W,\quad j=0,1,\cdots,J. \label{eq4.10f}
\end{alignat}
\end{lemma}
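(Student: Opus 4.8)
The plan is to prove the six bounds as a short cascade: the two ``primal'' estimates \eqref{eq4.10a} and \eqref{eq4.10} first, then the two ``dual'' estimates \eqref{eq4.10c} and \eqref{eq4.10d}, and finally to deduce \eqref{eq4.10e} and \eqref{eq4.10f} by sandwiching with the norm equivalences of Lemmas \ref{lem3.1} and \ref{lem3.2}. Each individual bound is a chain built from four ingredients only: the defining relations \eqref{eq3.22}--\eqref{eq3.23} of $\ctP_j$ and $\ctQ_j$, the generalized Cauchy--Schwarz inequalities of Lemma \ref{lem4.1}, the local stability assumption (SA$_3$), and the norm equivalences. No new machinery is needed; the entire content is bookkeeping of constants, so I would display the chains and let the suprema in the definitions do the work.

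For \eqref{eq4.10a} I would start from $\|\ctP_j v\|_{a_j}=\sup_{w_j\in W_j} a_j(\ctP_j v,w_j)/\normWj{w_j}$, replace the numerator by $a(v,\cS_j^\dag w_j)$ using \eqref{eq3.22}, apply \eqref{eq4.5a} to get $a(v,\cS_j^\dag w_j)\le \|v\|_a\,\normW{\cS_j^\dag w_j}$, and finish with the prolongation bound \eqref{eq4.3y}, $\normW{\cS_j^\dag w_j}\le\omeW\normWj{w_j}$; the factor $\normWj{w_j}$ cancels the denominator and the supremum is harmless. Estimate \eqref{eq4.10} is then immediate: since $\cP_j=\cR_j^\dag\circ\ctP_j$, apply \eqref{eq4.3x} to pull $\ctP_j v$ back to $V$ and combine with \eqref{eq4.10a}.

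The dual estimates are where the only genuine care is needed, and I expect this to be the main (if minor) obstacle. Here $\ctQ_j$ is defined through $a^*$ and prolongated by $\cS_j^\dag$, whose control via (SA$_3$) sits in the $\normW{\cdot}$-norm rather than directly in $\|\cdot\|_{a^*}$, so one must route through the correct variant of Lemma \ref{lem4.1} and the correct one-sided norm equivalence or the constants come out wrong. Concretely, for \eqref{eq4.10c} I would rewrite $a_j^*(\ctQ_j w,v_j)=a^*(w,\cR_j^\dag v_j)=a(\cR_j^\dag v_j,w)$ via \eqref{eq3.23}, bound it by $\|\cR_j^\dag v_j\|_a\,\normW{w}$ using \eqref{eq4.5a}, control the first factor by $\omeV\|v_j\|_{a_j}\le\omeV C_{a_j}\normVj{v_j}$ via \eqref{eq4.3x} and \eqref{eq3.20}, convert $\normW{w}$ into $\beta_a^{-1}\|w\|_{a^*}$ via \eqref{eq3.9}, and take the supremum over $v_j$. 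Choosing \eqref{eq4.5b} instead of \eqref{eq4.5a} at the second step would land on the $\|\cdot\|_V$-norm of $\cR_j^\dag v_j$ and produce $\gamma_a^{-1}$ in place of $\beta_a^{-1}$, which is exactly why the choice matters. Estimate \eqref{eq4.10d} then follows from $\cQ_j=\cS_j^\dag\circ\ctQ_j$ by applying, in order, $\|\cdot\|_{a^*}\le C_a\normW{\cdot}$ \eqref{eq3.9}, the prolongation bound \eqref{eq4.3y}, the equivalence $\normWj{\cdot}\le\beta_{a_j}^{-1}\|\cdot\|_{a_j^*}$ \eqref{eq3.21}, and finally \eqref{eq4.10c}.

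Finally, \eqref{eq4.10e} comes out of \eqref{eq4.10} by squeezing with $\gamma_a^{-1}$ on the left and $C_a$ on the right through \eqref{eq3.8}. For \eqref{eq4.10f} one cannot simply insert a norm equivalence into \eqref{eq4.10d}, since that would carry the spurious factors $C_a\beta_a^{-1}$; instead I would re-run the dual chain measuring $w$ directly in $\normW{\cdot}$. That is, the computation for \eqref{eq4.10c} but stopping before converting $\normW{w}$ gives the sharper bound $\|\ctQ_j w\|_{a_j^*}\le\omeV C_{a_j}\normW{w}$, and then \eqref{eq4.3y} followed by \eqref{eq3.21} yields $\normW{\cQ_j w}\le\omeW\normWj{\ctQ_j w}\le\omeW\beta_{a_j}^{-1}\|\ctQ_j w\|_{a_j^*}\le\omeV\omeW C_{a_j}\beta_{a_j}^{-1}\normW{w}$, which is exactly \eqref{eq4.10f}.
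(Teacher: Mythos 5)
Your proposal is correct and follows essentially the same route as the paper's proof: the same chains through \eqref{eq3.22}--\eqref{eq3.23}, Lemma \ref{lem4.1}, (SA$_3$), and the norm equivalences, including the key point that \eqref{eq4.10f} must be obtained from the intermediate bound $\|\ctQ_j w\|_{a_j^*}\leq \omeV C_{a_j}\normW{w}$ rather than from \eqref{eq4.10d}. You even supply the short squeezing argument for \eqref{eq4.10e}, which the paper's proof leaves implicit.
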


\begin{proof}
For any $v\in V$, by assumption (SA$_3$) and Lemma \ref{lem4.1} we get
for $j=0,1,\cdots,J$,
\begin{alignat}{2}\label{eq4.12}
\|\ctP_j v\|_{a_j}
&= \sup_{ w_j\in W_j} \frac{ a_j(\ctP_j v, w_j)}{\normWj{w_j} }  && \\
&= \sup_{ w_j\in W_j} \frac{ a(v, \cS_j^\dag w_j)}{\normWj{w_j} }
&&\qquad (\mbox{by \eqref{eq3.22}})
\nonumber \\
&\leq \sup_{ w_j\in W_j} \frac{\|v\|_a\,\normW{\cS_j^\dag w_j}}{\normWj{w_j}}  
&&\qquad (\mbox{by \eqref{eq4.5a}})
\nonumber \\
&\leq \omeW \|v\|_a. &&\qquad (\mbox{by \eqref{eq4.3y}}) \nonumber
\end{alignat}
Hence, \eqref{eq4.10a} holds. \eqref{eq4.10} follows immediately
from \eqref{eq4.10a} and \eqref{eq4.3x}.  By assumption (SA$_3$) and Lemma \ref{lem4.1} we obtain
\begin{align*}
 \| \ctQ w \|_{a^*_j} &= \sup_{v_j \in V_j} \frac{a_j (v_j, \ctQ w)}{\normVj{v_j}}  \\
	&= \sup_{v_j \in V_j} \frac{a (\cR_j^\dag v_j, w)}{\normVj{v_j}} &\qquad (\mbox{by \eqref{eq3.23}}) \\
	& \leq \sup_{v_j \in V_j} \frac{\| \cR^\dag_j v_j \|_a \normW{w}}{\normVj{v_j}} &\qquad (\mbox{by \eqref{eq4.5a}})  \\
	& \leq \sup_{v_j \in V_j} \frac{\omeV \| v_j \|_{a_j} \normW{w}}{\normVj{v_j}} &\qquad (\mbox{by \eqref{eq4.3x}}) \\
	& \leq \omeV C_{a_j} \normW{w} &\qquad (\mbox{by \eqref{eq3.20}}) \\
	& \leq \omeV C_{a_j} \beta_a^{-1} \| w \|_{a^*}. &\qquad (\mbox{by \eqref{eq3.9}})
\end{align*}
Hence, \eqref{eq4.10c} holds.  \eqref{eq4.10d} follows from \eqref{eq4.10c}, \eqref{eq3.8}, \eqref{eq4.3y}, and \eqref{eq3.21}. From the proof for \eqref{eq4.10c} we can obtain $\| \ctQ_j w \|_{a^*_j} \leq \omeV C_{a_j} \|w \|_W$.  This result along with \eqref{eq4.3y} and \eqref{eq3.21} yields \eqref{eq4.10f}.  The proof is complete.
\end{proof}

We now are ready to give an upper bound estimate for 
the additive Schwarz operator $\Pad$.

\begin{proposition}\label{prop4.1}
Under assumptions (SA$_0$)--(SA$_3$) the following estimate holds:
\begin{align}\label{eq4.13}
\|\Pad v\|_a \leq \omeV \omeW \bigl[1+ \omeW\CW N(\Theta)\bigr] 
\|v\|_a \qquad\forall v\in V,
\end{align}
where $\Theta=[\theta_{ij}]_{i,j=0}^J$, $N(\Theta)=\max\{N_j(\Theta);\, 
0\leq j\leq J\}$ and $N_j(\Theta)$ denotes the number of nonzero entries 
in the vector $\Theta(1:J,j)$, i.e., the number of nonzeros among the 
last $J$ entries of the $j$th column of the matrix $\Theta$.

\end{proposition}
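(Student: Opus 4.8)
The plan is to estimate $\|\Pad v\|_a$ through the dual characterization $\|z\|_a=\sup_{w\in W}a(z,w)/\normW{w}$ from \eqref{eq3.6}, while treating the coarse component $\cP_0$ separately from the remaining sum $\widehat{\cP}:=\sum_{i=1}^J\cP_i$, so that $\Pad=\cP_0+\widehat{\cP}$. First I would apply the triangle inequality for $\|\cdot\|_a$ to write $\|\Pad v\|_a\le\|\cP_0 v\|_a+\|\widehat{\cP} v\|_a$ and dispose of the first term at once via \eqref{eq4.10} in Lemma~\ref{lem4.2}, which gives $\|\cP_0 v\|_a\le\omeV\omeW\|v\|_a$; this is precisely the isolated $1$ inside the bracket of \eqref{eq4.13}. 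All the real work then goes into proving the refined bound $\|\widehat{\cP} v\|_a\le\omeV\omeW^2\CW N(\Theta)\|v\|_a$.

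To bound $\|\widehat{\cP} v\|_a$, I would fix an arbitrary $w\in W$ and feed it to the energy stable decomposition assumption (SA$_1$), writing $w=\sum_{j=0}^J\cS_j^\dag w_j$ with $\sum_{j=0}^J\normWj{w_j}\le\CW\normW{w}$ by \eqref{eq4.1y}. Expanding by bilinearity and using $\cP_i=\cR_i^\dag\circ\ctP_i$ gives $a(\widehat{\cP} v,w)=\sum_{i=1}^J\sum_{j=0}^J a(\cR_i^\dag\ctP_i v,\cS_j^\dag w_j)$. Each summand is then squeezed by the strengthened generalized Cauchy--Schwarz inequality (SA$_2$), applied with $\ctP_i v\in V_i$ in the trial slot and $w_j\in W_j$ in the test slot, to obtain $a(\cR_i^\dag\ctP_i v,\cS_j^\dag w_j)\le\theta_{ij}\|\cP_i v\|_a\normW{\cS_j^\dag w_j}$.

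It remains to simplify the double sum. I would replace $\|\cP_i v\|_a\le\omeV\omeW\|v\|_a$ using \eqref{eq4.10} and $\normW{\cS_j^\dag w_j}\le\omeW\normWj{w_j}$ using the local stability bound \eqref{eq4.3y}, which together yield $a(\widehat{\cP} v,w)\le\omeV\omeW^2\|v\|_a\sum_{j=0}^J\normWj{w_j}\bigl(\sum_{i=1}^J\theta_{ij}\bigr)$. The counting argument closes the estimate: since every $\theta_{ij}\in[0,1]$, the inner sum $\sum_{i=1}^J\theta_{ij}$ is bounded by the number of nonzero entries among $\theta_{1j},\dots,\theta_{Jj}$, i.e.\ by $N_j(\Theta)\le N(\Theta)$. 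Pulling this factor out and invoking \eqref{eq4.1y} once more gives $a(\widehat{\cP} v,w)\le\omeV\omeW^2\CW N(\Theta)\|v\|_a\normW{w}$; dividing by $\normW{w}$ and taking the supremum over $w\in W$ produces the claimed bound on $\|\widehat{\cP} v\|_a$, and adding back the coarse term finishes \eqref{eq4.13}.

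The only genuinely delicate points are bookkeeping rather than analysis. Because (SA$_2$) lists the trial ($V$-side) index first and the test ($W$-side) index second, the coefficient attached to $a(\cP_i v,\cS_j^\dag w_j)$ is $\theta_{ij}$, so it is the \emph{column} sums of $\Theta$ over rows $1,\dots,J$ --- exactly the quantities $N_j(\Theta)$ --- that control the estimate. The second subtlety, which fixes the precise shape of the constant, is the decision to peel off $\cP_0$ before running the Cauchy--Schwarz and counting machinery: retaining $i=0$ in the double sum would add an extra $\theta_{0j}=1$ to each column and replace the clean factor $1+\omeW\CW N(\Theta)$ by something larger. I expect that once the test-function decomposition is set up and the indices are tracked carefully, the remaining steps are essentially mechanical.
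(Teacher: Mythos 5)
Your proposal is correct and follows essentially the same route as the paper's proof: decompose the test function via (SA$_1$), peel off the coarse term, apply (SA$_2$) together with \eqref{eq4.10} and \eqref{eq4.3y}, and close with the counting bound $\sum_{i=1}^J\theta_{ij}\le N_j(\Theta)\le N(\Theta)$ and \eqref{eq4.1y}. The only cosmetic difference is that you split $\|\Pad v\|_a$ by the triangle inequality before dualizing, whereas the paper performs the same split inside a single estimate of $a(\Pad v,w)$ and divides by $\normW{w}$ at the end.
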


\begin{proof}
For any $w\in W$, let $\{w_j\}$ be an energy stable decomposition of $w$ as
defined in (SA$_1$). By the definition of $\Pad$,
\eqref{eq4.5a}, \eqref{eq4.2}, \eqref{eq4.10}, 
\eqref{eq4.3y}, and \eqref{eq4.1y}
we get for any $v\in V$
\begin{align}\label{eq4.15}
a(\Pad v,w) &= a(\cP_0 v, w) +\sum_{i=1}^J a(\cP_i v, w) \\
&= a(\cP_0 v, w) +\sum_{i=1}^J\sum_{j=0}^J  a(\cR_i^\dag \ctP_i v, \cS_j^\dag w_j) 
\nonumber \\
&\leq \|\cP_0 v\|_a \normW{w}
+\sum_{i=1}^J\sum_{j=0}^J \theta_{ij} \|\cP_i v\|_a \normW{\cS_j^\dag w_j} 
\nonumber \\
&\leq \omeV \omeW \|v\|_a \Bigl\{ \normW{w}+ 
\sum_{j=0}^J N_j(\Theta) \normW{\cS_j^\dag w_j} \Bigr\} 
\nonumber \\
&\leq \omeV \omeW \|v\|_a \Bigl\{ \normW{w}+
\omeW N(\Theta) \sum_{j=0}^J \normWj{w_j} \Bigr\}  \nonumber \\
&\leq \omeV \omeW \|v\|_a \Bigl\{ \normW{w}+
\omeW N(\Theta) \CW \normW{w} \Bigr\}  \nonumber \\
&= \omeV \omeW \bigl[ 1+ \omeW \CW N(\Theta) \bigr] \|v\|_a \normW{w}.  
\nonumber 
\end{align}
Hence, \eqref{eq4.13} holds. The proof is complete.
\end{proof}

As expected, it is harder to get a lower bound estimate for the 
additive Schwarz operator $\Pad$. Such a bound then readily provides
an upper bound for $\Pad^{-1}$. To this end, we first establish
the following key lemma.

\begin{lemma}\label{lem4.3}
(i) Suppose that for every $v\in V$, $\{\ctP_j v; j=0,1,2,\cdots, J\}$ 
forms a stable decomposition of $\Pad v$.  Then under
assumptions (SA$_0$) and (SA$_1$) the following inequality holds:
\begin{align}\label{eq4.6a}
\sum_{j=0}^J \|\ctP_j v\|_{a_j} \leq \CV  \|\Pad v\|_a \qquad \forall v\in V.
\end{align}

(ii) If the condition of (i) does not hold, then under assumptions 
(SA$_0$)--(SA$_4$) we have
\begin{align}\label{eq4.6b}
\sum_{j=0}^J \|\ctP_j v\|_{a_j} 
&\leq \frac{3\omeW}2 \| \Pad v \|_a 
+ \Bigl[ \bigl(1+N(\Theta) \bigr) \deltaV + \widehat{\delta}_V \Bigr] \| v \|_a,
\end{align}
where $N(\Theta)$ is the same as in Proposition \ref{prop4.1}.

\end{lemma}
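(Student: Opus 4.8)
The plan is to dispatch part (i) immediately and to treat part (ii) as a quantitative perturbation of it. For part (i), the hypothesis is \emph{precisely} that the particular decomposition $\{\ctP_j v\}_{j=0}^J$ of $\Pad v=\sum_{j=0}^J\cR_j^\dag\ctP_j v$ is energy stable in the sense of (SA$_1$). Reading off the stability bound \eqref{eq4.1x} for the function $\Pad v$ then yields $\sum_{j=0}^J\|\ctP_j v\|_{a_j}\le\CV\|\Pad v\|_a$ at once, so nothing beyond (SA$_0$)--(SA$_1$) is required.

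For part (ii) the decomposition $\{\ctP_j v\}$ need not be stable, so I would estimate $S:=\sum_{j=0}^J\|\ctP_j v\|_{a_j}$ from its dual definition. For each $j$ I pick a (near-)maximizing test function $w_j\in W_j$ with $\normWj{w_j}=1$ so that $\|\ctP_j v\|_{a_j}=a_j(\ctP_j v,w_j)$, and rewrite this through the defining relation \eqref{eq3.22} as $a_j(\ctP_j v,w_j)=a(v,\cS_j^\dag w_j)$. The key manoeuvre is then to insert the additive operator: for each $j$ I compare $v$ against the appropriate piece of $\Pad v$, writing $a(v,\cS_j^\dag w_j)$ as a \emph{principal part} built from $a(\Pad v,\cdot)$ (respectively from $a(\cP_i v,\cdot)$ and $a(\widehat{\cP} v,\cdot)$) plus a \emph{consistency remainder} of the form $a_j\bigl(\ctP_j(v-\cP_i v),w_j\bigr)$ or $a_0\bigl(\ctP_0(v-\widehat{\cP} v),w_0\bigr)$ --- exactly the quantities controlled by (SA$_4$).

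The principal part I would bound with the generalized Cauchy--Schwarz inequality \eqref{eq4.5a} and the local-stability bounds (SA$_3$), keeping the coarse index $j=0$ entirely separate from the fine block $j\ge1$. The coarse term, being a single term, contributes an $\omeW\|\Pad v\|_a$; the fine block I would expand with the strengthened Cauchy--Schwarz (SA$_2$) so that the coupling coefficients $\theta_{ij}$ appear, then collapse each sum over a column of $\widehat{\Theta}=\Theta(1:J,1:J)$ into the sparsity count $N(\Theta)$, producing a further multiple of $\|\Pad v\|_a$; tracking the constants, the coarse and fine contributions combine to $\tfrac32\omeW\|\Pad v\|_a$. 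The consistency remainder I would estimate termwise with (SA$_4$): its coarse part gives $\deltaV$ and $\widehat{\delta}_V$ (the second and third inequalities of (SA$_4$)), while the fine part, after summing the $\theta_{ij}$ against a column of $\Theta$, gives $N(\Theta)\deltaV$ (the first inequality of (SA$_4$)), for a total of $\bigl[(1+N(\Theta))\deltaV+\widehat{\delta}_V\bigr]\|v\|_a$. Summing the two contributions produces \eqref{eq4.6b}.

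The hard part will be the coarse--fine coupling in the principal term. Because the coarse row and column of $\Theta$ are full ($\theta_{0j}=\theta_{j0}=1$), a blanket application of the strengthened Cauchy--Schwarz over all index pairs --- or a plain triangle-inequality estimate of the aggregated test function $\sum_{j\ge1}\cS_j^\dag w_j$ --- introduces an uncontrolled factor of $J$. The crux is therefore to isolate the coarse contribution and bound it through the single coarse test function, invoking the sparsity of $\widehat{\Theta}$ only on the fine block, and to arrange the insertion so that the approximability constants of (SA$_4$) enter with exactly the multiplicities $1+N(\Theta)$ and $1$ rather than being multiplied by $J$. Making this bookkeeping land on the clean constant $\tfrac32\omeW$ is the delicate step; the remaining estimates are routine consequences of Lemmas \ref{lem4.1}--\ref{lem4.2}, Proposition \ref{prop4.1}, and the norm equivalences \eqref{eq3.20}--\eqref{eq3.21}.
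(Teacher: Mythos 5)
Part (i) of your proposal is correct and identical to the paper's argument: the hypothesis of (i) is exactly that $\{\ctP_j v\}$ is a stable decomposition of $\Pad v$, so \eqref{eq4.1x} applied to $\Pad v$ gives \eqref{eq4.6a} immediately.

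In part (ii), however, there is a genuine gap, and it sits exactly where you flagged it: you never specify how the ``principal part'' is to be bounded so that the coefficient of $\|\Pad v\|_a$ comes out as $\frac{3\omeW}{2}$, and the mechanism you do sketch cannot produce it. Bounding the fine-block principal term with the strengthened Cauchy--Schwarz inequality (SA$_2$) makes the coupling coefficients $\theta_{ij}$ --- and hence $N(\Theta)$, or in the worst case $J$ --- multiply $\|\Pad v\|_a$, which is inconsistent with the clean constant in \eqref{eq4.6b}; no bookkeeping along those lines lands on a bound independent of $N(\Theta)$ and $J$. The missing idea is a pair of exact averaging identities. Writing $u:=\Pad v$ and using linearity of $\ctP_j$ together with $u=\cP_0v+\widehat{\cP}v=\sum_{i=0}^J\cP_i v$, one has
\begin{align*}
\ctP_0 v &= \frac12\Bigl[\ctP_0 u+\ctP_0\bigl(v-\cP_0 v\bigr)+\ctP_0\bigl(v-\widehat{\cP}v\bigr)\Bigr],\\
\ctP_j v &= \frac{1}{J+1}\Bigl[\ctP_j u+\sum_{i=0}^J\ctP_j\bigl(v-\cP_i v\bigr)\Bigr]\qquad (1\le j\le J),
\end{align*}
since $\sum_{i=0}^J\ctP_j(v-\cP_i v)=(J+1)\ctP_j v-\ctP_j u$. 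Taking $\|\cdot\|_{a_j}$ norms, the principal term in each identity is a \emph{single} term $\ctP_j u$, bounded by $\omeW\|u\|_a$ using only \eqref{eq4.10a} (i.e., (SA$_3$) plus the generalized Cauchy--Schwarz inequality); (SA$_2$) never touches the principal part. Summing the weights, $\frac12+\sum_{j=1}^J\frac{1}{J+1}=\frac12+\frac{J}{J+1}<\frac32$, which is precisely the origin of $\frac{3\omeW}{2}$. The remainder terms are exactly the (SA$_4$) quantities, and collapsing $\sum_{i}\theta_{ij}$ to $N_j(\Theta)$ gives the total $\bigl[(1+N(\Theta))\deltaV+\widehat{\delta}_V\bigr]\|v\|_a$ --- the one part of your bookkeeping that is correct. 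Your dual reformulation via maximizing test functions $w_j$ and \eqref{eq3.22} is harmless and equivalent to estimating the norms directly, but without the averaging identities the decomposition into ``principal part plus consistency remainder'' that you invoke does not exist in a form that proves the lemma.
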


\begin{proof}
(i) For any $v\in V$, let $u=\Pad v$, $u_j=\ctP_j v$ for $j=0,1,2,\cdots,J$.  
Since
\begin{align*}
u=\Pad v = \sum_{j=0}^J \cP_j v 
= \sum_{j=0}^J \cR_j^\dag \circ \ctP_j v
=\sum_{j=0}^J \cR_j^\dag u_j,
\end{align*}
$\{u_j\}$ is indeed a decomposition of $u$ which is assumed to 
be stable. By assumption (SA$_1$) we conclude that
\eqref{eq4.1x} holds for $u$, which gives \eqref{eq4.6a}.

(ii) Let $u$ be same as in part (i).  Recall that $\widehat{\cP}=
\sum_{j=1}^J \cP_j$. Using the identities
\begin{align*}
\ctP_0 v &= \frac{1}{2} \Bigl[ \ctP_0 u + \ctP_0 ( v-\cP_0 v) 
           + \ctP_0 (v-\widehat{\cP} v) \Bigr], \\
\ctP_j v &= \frac{1}{J+1} \Bigl[ \ctP_j u
+\sum_{i=0}^J \ctP_j ( v-\cP_i v) \Bigr] \qquad \mbox{for } j = 1, 2, \cdots, J,
\end{align*}
the triangle inequality, $(SA_4)$ and \eqref{eq4.10a} we get 
\begin{align*}
\| \ctP_0 v \|_{a_0} & \leq \frac{1}{2} \Bigl[ \| \ctP_0 u \|_{a_0} 
+ \| \ctP_0 \bigl(v - \cP_0v \bigr) \|_{a_0} 
+ \| \ctP_0 \bigl( v - \widehat{\cP} v) \|_{a_0} \Bigr] \\
&\leq \frac12 \Bigl[ \omeW \| u \|_a 
+ \bigl(\deltaV + \widehat{\delta}_V\bigr) \|v\|_a  \Bigr], \\
\| \ctP_j v \|_{a_j} &\leq \frac{1}{J+1} \Bigl[ \|\ctP_j u\|_{a_j} 
+ \sum_{i=0}^J  \|\ctP_j ( v-\cP_i v)\|_{a_j} \Bigr] \\
&\leq \frac{1}{J+1} \Bigl[ \omeW \|u\|_{a} 
+ \sum_{i=0}^J \theta_{ij} \deltaV \|v\|_{a} \Bigr] \\
&\leq \frac{1}{J+1} \Bigl[ \omeW \|u\|_{a} 
+ N_j(\Theta) \deltaV \| v \|_{a} \Bigr] \qquad\mbox{for } j = 1,2, \cdots, J.
\end{align*}

Then summing the above inequality we obtain
\begin{align*}
\sum_{j = 0}^J \| \ctP_j v \|_{a_j} &\leq \frac{3\omeW}2 \| u \|_{a} 
+ \Bigl[ \bigl(1+N(\Theta) \bigr) \deltaV + \widehat{\delta}_V \Bigr] \| v \|_a.
\end{align*}
Hence, \eqref{eq4.6b} holds. The proof is complete.
\end{proof}

We now are ready to establish a lower bound estimate for the 
additive Schwarz operator $\Pad$.

\begin{proposition}\label{prop4.2}
(i) Under assumptions of (i) of Lemma \ref{lem4.3},  the following estimate holds:
\begin{align}\label{eq4.7}
\|\Pad v\|_a \geq (\CV \CW)^{-1} \|v\|_a \qquad\forall v\in V.
\end{align}

(ii) Under assumptions of (ii) of Lemma \ref{lem4.3},   the following estimate holds:
\begin{align}\label{eq4.8}
\|\Pad v\|_a &\geq K_0^{-1} \|v\|_a \qquad\forall v\in V \end{align}
provided that $C_W\bigl[ \bigl(1+N(\Theta) \bigr) \deltaV 
+ \widehat{\delta}_V \bigr]<1$ where
\begin{align}\label{eq4.8a}
K_0:= \frac{3\omeW \CW }{2-2\CW \bigl[\bigl(1+N(\Theta) \bigr) \deltaV 
+ \widehat{\delta}_V \bigr]}.
\end{align}
Consequently, operator $\Pad$ is invertible.
\end{proposition}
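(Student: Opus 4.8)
The plan is to argue by duality, exploiting the definition $\|v\|_a=\sup_{w\in W} a(v,w)/\normW{w}$ to reduce the desired lower bound on $\|\Pad v\|_a$ to the upper estimate for $\sum_{j=0}^J \|\ctP_j v\|_{a_j}$ already furnished by Lemma \ref{lem4.3}. The engine of the argument is a single algebraic identity: for any fixed $v\in V$ and any $w\in W$, take the energy stable decomposition $w=\sum_{j=0}^J \cS_j^\dag w_j$ guaranteed by (SA$_1$); then bilinearity of $a(\cdot,\cdot)$ together with the defining relation \eqref{eq3.22} of the projection-like operators gives $a(v,w)=\sum_{j=0}^J a(v,\cS_j^\dag w_j)=\sum_{j=0}^J a_j(\ctP_j v, w_j)$. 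This converts the global quantity $a(v,w)$ into a sum of purely local pairings to which the local tools apply.

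Next I would estimate each local term by the generalized Cauchy--Schwarz inequality \eqref{eq4.5c}, namely $a_j(\ctP_j v, w_j)\le \|\ctP_j v\|_{a_j}\normWj{w_j}$, and then bound the resulting sum of products by the product of sums (valid since all terms are nonnegative), so that $a(v,w)\le \bigl(\sum_{j=0}^J \|\ctP_j v\|_{a_j}\bigr)\bigl(\sum_{j=0}^J \normWj{w_j}\bigr)$. The stability bound \eqref{eq4.1y} of (SA$_1$) controls the second factor by $\CW\normW{w}$, whence $a(v,w)\le \CW \bigl(\sum_{j=0}^J \|\ctP_j v\|_{a_j}\bigr)\normW{w}$ for every $w$. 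Dividing by $\normW{w}$ and taking the supremum over $w\in W$ yields $\|v\|_a\le \CW \sum_{j=0}^J \|\ctP_j v\|_{a_j}$.

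For part (i) I would now insert Lemma \ref{lem4.3}(i), which gives $\sum_{j=0}^J \|\ctP_j v\|_{a_j}\le \CV\|\Pad v\|_a$, producing $\|v\|_a\le \CV\CW\|\Pad v\|_a$, that is \eqref{eq4.7}. For part (ii) I would instead insert Lemma \ref{lem4.3}(ii) to obtain $\|v\|_a\le \CW\bigl[\frac{3\omeW}{2}\|\Pad v\|_a+\bigl((1+N(\Theta))\deltaV+\widehat{\delta}_V\bigr)\|v\|_a\bigr]$. The smallness hypothesis $\CW[(1+N(\Theta))\deltaV+\widehat{\delta}_V]<1$ lets me move the $\|v\|_a$ term to the left-hand side and divide by the positive factor $1-\CW[(1+N(\Theta))\deltaV+\widehat{\delta}_V]$; this is precisely the algebra that produces the constant $K_0$ of \eqref{eq4.8a} and hence \eqref{eq4.8}. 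Finally, since $\|\cdot\|_a$ is a genuine norm on the finite dimensional space $V$ (Lemma \ref{lem3.1}), the bound $\|\Pad v\|_a\ge K_0^{-1}\|v\|_a$ forces $\Pad$ to be injective and therefore invertible.

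I expect the only genuinely delicate point to be the first one: recognizing that the defining relation \eqref{eq3.22} of $\ctP_j$ is exactly what is needed to collapse $a(v,w)$ into the local sum $\sum_{j=0}^J a_j(\ctP_j v, w_j)$, and that it is the test function $w$ (rather than the trial function $v$) that must be decomposed via (SA$_1$). Once this duality set-up is in place, the remaining steps are routine applications of Lemma \ref{lem4.1}, (SA$_1$), and Lemma \ref{lem4.3}; the part (ii) estimate additionally needs the absorption argument, which is harmless under the stated smallness condition.
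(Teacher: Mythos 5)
Your proposal is correct and follows essentially the same route as the paper's own proof: decompose the test function $w$ via (SA$_1$), collapse $a(v,w)$ into $\sum_{j} a_j(\ctP_j v, w_j)$ using \eqref{eq3.22}, apply the local Cauchy--Schwarz bound \eqref{eq4.5c}, the sum-of-products estimate, and \eqref{eq4.1y}, then substitute Lemma \ref{lem4.3} and (for part (ii)) absorb the $\|v\|_a$ term under the smallness condition. The only difference is cosmetic: the paper leaves the absorption algebra implicit, whereas you spell it out.
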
 

\begin{proof} 
For any $w\in W$, let $\{w_j\}$ be an energy stable decomposition 
of $w$, that is,  
\[
\qquad\quad w=\sum_{j=0}^J \cS_j^\dag w_j,
\]
and \eqref{eq4.1y} holds.  Then we have
\begin{alignat}{2}\label{eq4.9}
a(v,w) &= \sum_{j=0}^J a(v, \cS_j^\dag w_j) && \\
&= \sum_{j=0}^J a_j(\ctP_j v, w_j) \nonumber &&\qquad (\mbox{by \eqref{eq3.22}}) \\
&\leq \sum_{j=0}^J \|\ctP_j v\|_{a_j} \normWj{w_j} \nonumber 
&&\qquad (\mbox{by \eqref{eq4.5c}}) \\
&\leq \sum_{j=0}^J \|\ctP_j v\|_{a_j}\, \sum_{j=0}^J \normWj{w_j}  \nonumber 
&&\qquad (\mbox{by discrete Schwarz inequality}) \\
&\leq \CW \normW{w}  \sum_{j=0}^J \|\ctP_j v\|_{a_j}. \nonumber 
&&\qquad (\mbox{by \eqref{eq4.1y}}) 
\end{alignat}
The desired estimates \eqref{eq4.7} and \eqref{eq4.8} follow from
substituting \eqref{eq4.6a} and \eqref{eq4.6b} into \eqref{eq4.9}, respectively.  
The proof is complete.  
\end{proof}

\begin{remark}\label{rem4.2}
We note that the argument used in the proof of lower bound 
estimate \eqref{eq4.8} is in the spirit of the so-called
Schatz argument (cf. \cite{BS08}) which is often used to derive
finite element error estimates for nonsymmetric and indefinite problems. 
It is interesting to see that a similar argument also
plays an important role in our Schwarz convergence theory.
\end{remark}

Combining Propositions  \ref{prop4.1} and \ref{prop4.2} we 
obtain our first main theorem of this paper.

\begin{theorem}\label{thm4.1}
(i) If every $v\in V$ has a unique decomposition 
$v=\sum_{j=0}^J \cR_j^\dag v_j$ with $v_j\in V_j$, then under 
assumptions (SA$_0$)--(SA$_3$) the following condition
number estimate holds:  
\begin{align}\label{eq4.16}
\kappa_a(\Pad)
\leq \omeV\omeW \CV \CW \bigl[1+ \omeW\CW N(\Theta)\bigr]. 
\end{align}

(ii) If the above unique decomposition assumption does not hold, then
under assumptions (SA$_0$)--(SA$_4$) the following condition
number estimate holds: 
\begin{align}\label{eq4.17}
\kappa_a(\Pad)
\leq \omeV \omeW \bigl[1+ \omeW\CW N(\Theta)\bigr] K_0. 
\end{align}
Where
\begin{align}\label{eq4.18x}
\kappa_a(\Pad) &:= \|\Pad\|_a \|\Pad^{-1}\|_a, \\
\|\Pad\|_a &:=\sup_{0\neq v\in V} \frac{\|\Pad v\|_a}{\|v\|_a}.
\label{eq4.18y}
\end{align}
\end{theorem}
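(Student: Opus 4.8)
The plan is to read this theorem off directly as the packaging of the upper bound in Proposition \ref{prop4.1} and the lower bounds in Proposition \ref{prop4.2}, since by the defining relation \eqref{eq4.18x} the quantity $\kappa_a(\Pad)$ is simply the product $\|\Pad\|_a\,\|\Pad^{-1}\|_a$. First I would observe that Proposition \ref{prop4.1}, combined with the definition \eqref{eq4.18y} of the operator norm $\|\Pad\|_a$, already yields
\[
\|\Pad\|_a \leq \omeV\omeW\bigl[1+\omeW\CW N(\Theta)\bigr]
\]
in both cases, since that proposition needs only (SA$_0$)--(SA$_3$). Thus the whole remaining task is to bound $\|\Pad^{-1}\|_a$, which is where the lower bounds enter.

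Second, I would convert each lower bound of Proposition \ref{prop4.2} into an upper bound on $\|\Pad^{-1}\|_a$ by the standard substitution argument. A bound of the form $\|\Pad v\|_a \geq c\,\|v\|_a$ with $c>0$ forces $\Pad$ to be injective; since $\Pad\colon V\to V$ acts on the finite-dimensional space $V$, injectivity already gives invertibility, so $\Pad^{-1}$ is well defined (this is exactly the conclusion noted at the end of Proposition \ref{prop4.2}). Then for an arbitrary $u\in V$ I would set $v=\Pad^{-1}u$ and insert it into the lower bound to get $\|u\|_a \geq c\,\|\Pad^{-1}u\|_a$, whence $\|\Pad^{-1}\|_a \leq c^{-1}$.

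Third, I would split according to the two hypotheses and multiply. In case (i) the one point requiring any thought is to check that the uniqueness-of-decomposition hypothesis supplies precisely the hypothesis of part (i) of Proposition \ref{prop4.2}: the family $\{\ctP_j v\}$ is always a decomposition of $\Pad v$ (as exhibited in the proof of Lemma \ref{lem4.3}), while (SA$_1$) furnishes a stable decomposition of the element $\Pad v\in V$; the uniqueness hypothesis, applied to $\Pad v$, forces these two decompositions to coincide, so $\{\ctP_j v\}$ is itself stable. Hence Proposition \ref{prop4.2}(i) applies with $c=(\CV\CW)^{-1}$, giving $\|\Pad^{-1}\|_a\leq \CV\CW$, and multiplying by the upper bound produces \eqref{eq4.16}. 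In case (ii) Proposition \ref{prop4.2}(ii) applies directly with $c=K_0^{-1}$ under the smallness condition on the approximability constants, giving $\|\Pad^{-1}\|_a\leq K_0$ and hence \eqref{eq4.17}. The genuine difficulty has already been absorbed into the two preceding propositions — in particular the Schatz-type argument behind the lower bound \eqref{eq4.8} — so the only real work left at this stage is the invertibility check and the matching of hypotheses in part (i).
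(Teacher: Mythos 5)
Your proposal is correct and follows essentially the same route as the paper, whose proof of Theorem \ref{thm4.1} consists precisely of combining Propositions \ref{prop4.1} and \ref{prop4.2}; you have simply made explicit the routine steps the paper leaves implicit (the substitution $v=\Pad^{-1}u$ to convert the lower bounds into bounds on $\|\Pad^{-1}\|_a$, invertibility via injectivity on the finite-dimensional space $V$, and the matching of the uniqueness hypothesis in part (i) with the stable-decomposition hypothesis of Lemma \ref{lem4.3}(i)).
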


The above condition number estimates for the operator $\Pad$ 
also translates to its matrix representation. 

\begin{theorem}\label{thm4.2}
(i) Under assumptions of (i) of Theorem \ref{thm4.1} the following condition
number estimate holds:
\begin{align}\label{eq4.20}
\kappa_A(P_{\mbox{\tiny ad}}) 
\leq \omeV\omeW \CV \CW \bigl[1+ \omeW\CW N(\Theta)\bigr]. 
\end{align}

(ii) Under assumptions of (ii) of Theorem \ref{thm4.1} the following condition
number estimate holds:
\begin{align}\label{eq4.21}
\kappa_A(P_{\mbox{\tiny ad}}) 
\leq \omeV \omeW \bigl[1+ \omeW\CW N(\Theta)\bigr] K_0,
\end{align}
where 
\begin{align}\label{eq4.22x}
\kappa_A(P_{\mbox{\tiny ad}}) &:= \| P_{\mbox{\tiny ad}}\|_A 
\| P_{\mbox{\tiny ad}}^{-1} \|_A, \\
\| P_{\mbox{\tiny ad}}\|_A &:=\sup_{0\neq \bv\in \bR^d} 
\frac{ \|P_{\mbox{\tiny ad}}\bv\|_A }{ \|\bv\|_A }, \\
\|\bv\|_A &:=\sqrt{A\bv\cdot A\bv} =\sqrt{A^T A\bv\cdot \bv}. \label{eq4.22z}
\end{align}
\end{theorem}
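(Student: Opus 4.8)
The plan is to show that the coordinate isomorphism $v\mapsto\bv$ identifying each $v\in V$ with its coefficient vector $\bv\in\bR^n$ is an \emph{isometry} from $(V,\|\cdot\|_a)$ onto $(\bR^n,\|\cdot\|_A)$, and that under this isomorphism the operator $\Pad$ is carried to its matrix representation $P_{\mbox{\tiny ad}}$. Granting these two facts, the matrix condition number $\kappa_A(P_{\mbox{\tiny ad}})$ coincides \emph{exactly} with the operator condition number $\kappa_a(\Pad)$, and both estimates \eqref{eq4.20} and \eqref{eq4.21} follow immediately from the corresponding parts of Theorem \ref{thm4.1}.

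The heart of the argument is the isometry $\|v\|_a=\|\bv\|_A$. First I would record the elementary bilinear-algebra identity implied by \eqref{eq2.11a}: writing $v=\sum_j v^{(j)}\phi^{(j)}$ and $w=\sum_i w^{(i)}\psi^{(i)}$ with coordinate vectors $\bv$ and $\bw$, one has
\begin{align*}
a(v,w)=\sum_{i,j} w^{(i)} a_{ij} v^{(j)} = (A\bv)\cdot\bw.
\end{align*}
Substituting this into the definition \eqref{eq3.6} of $\|\cdot\|_a$ and invoking the convention that the discrete test-space norm is represented by the Euclidean norm of coordinates, $\normW{w}=\|\bw\|_2$, the self-duality of the Euclidean norm gives
\begin{align*}
\|v\|_a=\sup_{0\neq w\in W}\frac{a(v,w)}{\normW{w}}
=\sup_{0\neq\bw\in\bR^n}\frac{(A\bv)\cdot\bw}{\|\bw\|_2}
=\|A\bv\|_2=\|\bv\|_A,
\end{align*}
the last equality being precisely \eqref{eq4.22z}. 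This establishes the claimed isometry.

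Since $P_{\mbox{\tiny ad}}$ is by construction the matrix representation of $\Pad$, the coordinate map intertwines the two; that is, $\Pad v$ has coordinate vector $P_{\mbox{\tiny ad}}\bv$. Combining this with the isometry yields $\|\Pad v\|_a=\|P_{\mbox{\tiny ad}}\bv\|_A$ for every $v\in V$, whence
\begin{align*}
\|\Pad\|_a=\sup_{0\neq v\in V}\frac{\|\Pad v\|_a}{\|v\|_a}
=\sup_{0\neq\bv\in\bR^n}\frac{\|P_{\mbox{\tiny ad}}\bv\|_A}{\|\bv\|_A}
=\|P_{\mbox{\tiny ad}}\|_A.
\end{align*}
The same reasoning applied to $\Pad^{-1}$, which exists and is represented by $P_{\mbox{\tiny ad}}^{-1}$ by Proposition \ref{prop4.2}, gives $\|\Pad^{-1}\|_a=\|P_{\mbox{\tiny ad}}^{-1}\|_A$. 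Therefore $\kappa_A(P_{\mbox{\tiny ad}})=\kappa_a(\Pad)$, and invoking Theorem \ref{thm4.1}(i) and (ii) respectively produces \eqref{eq4.20} and \eqref{eq4.21}.

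I expect the one genuinely substantive point to be the norm identification underlying the isometry: the clean equality $\|v\|_a=\|\bv\|_A$ relies on the discrete $W$-norm being represented by the Euclidean coordinate norm, so that the supremum over test functions becomes a supremum over coordinate vectors and the self-duality of $\|\cdot\|_2$ applies. Once this identification is in force, the remainder is the routine observation that operator norms, inverses, and hence condition numbers are preserved by an isometric isomorphism that intertwines the operator with its matrix representation.
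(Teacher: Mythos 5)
Your proposal is correct as an argument, and it supplies precisely the step the paper leaves implicit: the paper offers no proof of Theorem \ref{thm4.2} at all, only the remark that the operator estimates of Theorem \ref{thm4.1} ``translate'' to the matrix representation, and your coordinate-isometry argument is the natural way to substantiate that remark. The one point to be careful about --- which you correctly identify as the crux --- is that the identity $\|v\|_a=\|\bv\|_A$ requires the convention that the discrete test-space norm is the Euclidean norm of coefficients, $\normW{w}=\|\bw\|_2$. The paper never states this convention; $\|\cdot\|_W$ is introduced as the norm of a Banach space $W\subset X$, and in the application of Section \ref{sec-5} it would be a DG-type energy norm, not a coordinate $\ell_2$ norm. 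Without the convention, your own computation shows $\|v\|_a=N^*(A\bv)$, where $N^*$ is the dual of the coordinate expression of $\normW{\cdot}$, so the coordinate map carries $\|\cdot\|_a$ to the norm $\bv\mapsto N^*(A\bv)$ rather than to $\|\bv\|_A=\|A\bv\|_2$; the quantities $\kappa_a(\Pad)$ and $\kappa_A(P_{\mbox{\tiny ad}})$ then agree only up to squared, basis-dependent equivalence constants between $N^*$ and $\|\cdot\|_2$, and the clean bound \eqref{eq4.20} with the identical constant as in \eqref{eq4.16} would not follow. So your proof is valid exactly in the regime where the theorem itself is valid as stated --- with the Euclidean identification in force --- and making that hypothesis explicit is an improvement on the paper's presentation rather than a defect of your argument.
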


\subsection{Condition number estimate for $\cP_{\mbox{\tiny hy}}$}
\label{sec-4.3}
As in the case of SPD problems \cite[section 2.5.2]{TW05},  
we replace the structure assumption (SA$_1$) by the following one:

\medskip
\begin{itemize}
\item[($\widetilde{\mbox{SA}}_1$)] {\em Energy stable decomposition 
assumption.} There exist positive constants $\tCV$ 
and $\tCW$ such that every pair $(\varphi,\psi)\in \mbox{range}(\cI-\alpha\cP_0)
\times \mbox{range}(\cI-\alpha\cQ_0)$ admits a decomposition 
\[
\varphi=\sum_{j=1}^J \cR_j^\dag \varphi_j,
\qquad\quad \psi=\sum_{j=1}^J \cS_j^\dag \psi_j,
\]
with $\varphi_j\in V_j$ and $\psi_j\in W_j$ such that
\begin{align}\label{eq4.23x}
\sum_{j=1}^J \|\varphi_j\|_{a_j} &\leq \tCV \|\varphi\|_a, \\
\sum_{j=1}^J \normWj{\psi_j} &\leq \tCW \normW{\psi}.  \label{eq4.23y}
\end{align}
\end{itemize}
We remark that the new {\em energy stable decomposition assumption} 
($\widetilde{\mbox{SA}}_1$)
implies that any pair $(v,w)\in V\times W$ has a stable 
decomposition (in the sense of (SA$_1$)) of the following form:
\[
v=\alpha\cP_0 v + \sum_{j=1}^J \cR_j^\dag \varphi_j,\qquad
w=\alpha\cQ_0 w + \sum_{j=1}^J \cS_j^\dag \psi_j,
\]
where $\{(\varphi_j,\psi_j)\}_{j=1}^J$ is a stable decomposition 
(in the sense of ($\widetilde{\mbox{SA}}_1$)) for 
$\bigl( (\cI-\alpha\cP_0)v, (\cI-\alpha\cQ_0)w \bigr)$.

Next lemma shows that $\cP_j$ (resp. $\Pad$) and $\cQ_j$ (resp. $\Qad$)
are mutually conjugate with respect to the bilinear form $a(\cdot,\cdot)$.

\begin{lemma}\label{lem4.8}
The following identities hold:
\begin{align}\label{e4.24x}
a(\cP_j v,w) &= a(v,\cQ_j w) \qquad\forall (v,w)\in V\times W, 
\, j=0,1,2,\cdots,J. \\
a(\Pad v,w) &= a(v,\Qad w) \qquad\forall (v,w)\in V\times W.
\end{align}
\end{lemma}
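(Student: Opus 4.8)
The plan is to prove the single-index identity \eqref{e4.24x} by chasing the defining relations of the projection-like operators, and then to obtain the additive identity by summing over $j$ and invoking bilinearity. The structural facts I will use are the adjoint conventions $a^*(w,v)=a(v,w)$ and $a_j^*(w_j,v_j)=a_j(v_j,w_j)$, the defining relations \eqref{eq3.22}--\eqref{eq3.23} for $\ctP_j$ and $\ctQ_j$, and the composition formulas \eqref{eq3.24}. No extra hypotheses beyond the well-definedness of $\ctP_j$ and $\ctQ_j$ (guaranteed by Theorem \ref{discrete_babuska}, as noted after \eqref{eq3.23}) are needed.

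First I would rewrite the left-hand side using $\cP_j=\cR_j^\dag\circ\ctP_j$, so that $a(\cP_j v,w)=a(\cR_j^\dag \ctP_j v, w)$, and then pass to the adjoint form $a^*(w,\cR_j^\dag \ctP_j v)$. This is exactly the shape appearing on the right-hand side of the definition \eqref{eq3.23} of $\ctQ_j$, so taking the test vector $v_j=\ctP_j v\in V_j$ there yields $a^*(w,\cR_j^\dag \ctP_j v)=a_j^*(\ctQ_j w,\ctP_j v)$. Converting back through the adjoint convention gives $a_j^*(\ctQ_j w,\ctP_j v)=a_j(\ctP_j v,\ctQ_j w)$, which now has the shape of the defining relation \eqref{eq3.22} of $\ctP_j$; choosing the test vector $w_j=\ctQ_j w\in W_j$ there produces $a_j(\ctP_j v,\ctQ_j w)=a(v,\cS_j^\dag \ctQ_j w)$. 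Finally, since $\cQ_j=\cS_j^\dag\circ\ctQ_j$, the right-hand side is precisely $a(v,\cQ_j w)$, completing the chain and establishing \eqref{e4.24x}.

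For the additive identity I would simply sum the index-$j$ identity over $j=0,1,\dots,J$, using $\Pad=\sum_{j=0}^J\cP_j$ and $\Qad=\sum_{j=0}^J\cQ_j$ together with the bilinearity of $a(\cdot,\cdot)$, to get $a(\Pad v,w)=\sum_{j=0}^J a(\cP_j v,w)=\sum_{j=0}^J a(v,\cQ_j w)=a(v,\Qad w)$.

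There is no genuine obstacle here; the argument is a bookkeeping exercise. The only point requiring care is tracking which argument plays the role of the free test function at each use of \eqref{eq3.22} and \eqref{eq3.23}, and verifying that the substituted vectors $\ctP_j v$ and $\ctQ_j w$ indeed lie in $V_j$ and $W_j$ respectively, so that the defining relations legitimately apply. Because $\ctP_j$ and $\ctQ_j$ are well defined under the compatibility assumptions, every term in the chain is meaningful.
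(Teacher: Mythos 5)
Your proof is correct: the chain $a(\cP_j v,w)=a^*(w,\cR_j^\dag \ctP_j v)=a_j^*(\ctQ_j w,\ctP_j v)=a_j(\ctP_j v,\ctQ_j w)=a(v,\cS_j^\dag \ctQ_j w)=a(v,\cQ_j w)$ uses exactly the defining relations \eqref{eq3.22}--\eqref{eq3.23} with the admissible test vectors $\ctP_j v\in V_j$ and $\ctQ_j w\in W_j$, and the additive identity follows by summation and bilinearity. The paper omits its proof as trivial, and your argument is precisely the routine verification the authors had in mind.
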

Since the proof is trivial, we omit it to save space.

The following proposition is the analogue to Proposition \ref{prop4.1}
for the hybrid operator $\cP_{\mbox{\tiny hy}}$. 

\begin{proposition}\label{prop4.3}
Under assumptions (SA$_0$), ($\widetilde{\mbox{SA}}_1$), (SA$_2$) and 
(SA$_3$) the following estimate holds:
\begin{align}\label{eq4.25}
\|\cP_{\mbox{\tiny hy}} v\|_a 
\leq  \omeV \omeW \bigl[ \alpha+ \omeW \tCW N(\widehat{\Theta}) 
\bigl(1+ \alpha\omeV \omeW C_{a_j} \beta_{a_j}^{-1} \bigr) 
\bigr] \|v\|_a  
\end{align} 
for all $v\in V$.  Where $\widehat{\Theta}=\Theta(1:J,1:J)$. 

\end{proposition}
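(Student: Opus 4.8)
The plan is to mimic the structure of the proof of Proposition \ref{prop4.1}, estimating $\|\cP_{\mbox{\tiny hy}} v\|_a$ through the duality characterization $\|\cP_{\mbox{\tiny hy}} v\|_a = \sup_{w\in W} a(\cP_{\mbox{\tiny hy}} v, w)/\normW{w}$, but now using the hybrid structure $\cP_{\mbox{\tiny hy}} = \alpha\cP_0 + (\cI-\alpha\cP_0)\widehat{\cP}$ from \eqref{hyb3} rather than the plain sum $\Pad$. First I would split the numerator as $a(\cP_{\mbox{\tiny hy}} v, w) = \alpha\, a(\cP_0 v, w) + a\bigl((\cI-\alpha\cP_0)\widehat{\cP} v, w\bigr)$. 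The first term is handled immediately by \eqref{eq4.5a} and the bound \eqref{eq4.10} on $\cP_0$, yielding a contribution bounded by $\alpha\,\omeV\omeW \|v\|_a \normW{w}$.

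For the second term the key observation is the conjugacy identity of Lemma \ref{lem4.8}: I would move the operator $(\cI-\alpha\cP_0)$ off of the first argument and onto the test function, writing $a\bigl((\cI-\alpha\cP_0)\widehat{\cP} v, w\bigr) = a\bigl(\widehat{\cP} v, (\cI-\alpha\cQ_0) w\bigr)$. This is the conceptual heart of the argument: it lets me reuse the test-function decomposition machinery. I would then apply ($\widetilde{\mbox{SA}}_1$) to the pair $\bigl((\cI-\alpha\cP_0)v, (\cI-\alpha\cQ_0)w\bigr)$, decomposing $\psi := (\cI-\alpha\cQ_0) w = \sum_{j=1}^J \cS_j^\dag \psi_j$ with the stability bound $\sum_{j=1}^J \normWj{\psi_j} \leq \tCW \normW{\psi}$. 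Expanding $\widehat{\cP} v = \sum_{i=1}^J \cR_i^\dag \ctP_i v$ and pairing against this decomposition produces a double sum $\sum_{i=1}^J\sum_{j=1}^J a(\cR_i^\dag \ctP_i v, \cS_j^\dag \psi_j)$, to which I would apply the strengthened Cauchy--Schwarz assumption (SA$_2$), \eqref{eq4.2}. The sparsity bookkeeping then introduces $N(\widehat{\Theta})$ exactly as in Proposition \ref{prop4.1}, since only the indices $1 \le i,j \le J$ appear and $\widehat{\Theta} = \Theta(1:J,1:J)$.

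Assembling these, the second-term estimate becomes $\omeV\omeW \|v\|_a \cdot \omeW N(\widehat{\Theta}) \tCW \normW{\psi}$ after applying \eqref{eq4.10} to the $\ctP_i v$ factors and \eqref{eq4.3y} to pass from $\normW{\cS_j^\dag \psi_j}$ to $\normWj{\psi_j}$. The remaining task is to bound $\normW{\psi} = \normW{(\cI-\alpha\cQ_0)w}$ back in terms of $\normW{w}$. By the triangle inequality $\normW{(\cI-\alpha\cQ_0)w} \le \normW{w} + \alpha \normW{\cQ_0 w}$, and I would control $\normW{\cQ_0 w}$ using \eqref{eq4.10f}, which gives the factor $\omeV\omeW C_{a_j}\beta_{a_j}^{-1}$; this is precisely the source of the $\bigl(1 + \alpha\omeV\omeW C_{a_j}\beta_{a_j}^{-1}\bigr)$ term in the stated bound. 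I expect the main obstacle to be purely organizational rather than deep: correctly tracking the interplay between the $\cP$-side operators acting on $v$ and the $\cQ$-side operators arising on the test function after the conjugacy swap, and making sure the constants $\omeV, \omeW, C_{a_j}, \beta_{a_j}$ land in exactly the combination claimed. Collecting the two contributions and dividing by $\normW{w}$ yields \eqref{eq4.25}.
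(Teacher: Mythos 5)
Your proposal is correct and follows essentially the same route as the paper's proof: the same splitting $\cP_{\mbox{\tiny hy}} = \alpha\cP_0 + (\cI-\alpha\cP_0)\widehat{\cP}$, the same use of the conjugacy identity of Lemma \ref{lem4.8} to shift $(\cI-\alpha\cQ_0)$ onto the test function, the same application of ($\widetilde{\mbox{SA}}_1$), (SA$_2$) with the $N(\widehat{\Theta})$ counting, and the same use of \eqref{eq4.10}, \eqref{eq4.3y} and \eqref{eq4.10f} to produce the factor $\bigl(1+\alpha\omeV\omeW C_{a_j}\beta_{a_j}^{-1}\bigr)$. The only cosmetic difference is that you carry the duality quotient over $w$ from the start, whereas the paper first bounds $a(\widehat{\cP}v,\psi)$ and converts to an operator-norm bound at the end; the estimates and constants are identical.
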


\begin{proof}
Let $\widehat{\cP}:=\sum_{j=1}^J \cP_j$ and $\widehat{\cQ}:=\sum_{j=1}^J \cQ_j$.
For any $v\in V$ and $w\in W$, let $\varphi:=(\cI-\alpha\cP_0) v$ and 
$\psi:=(\cI-\alpha\cQ_0)w$. Obviously, 
$\varphi\in \mbox{range}(\cI-\alpha\cP_0)$
and $\psi \in \mbox{range}(\cI-\cQ_0)$.
By assumption ($\widetilde{\mbox{SA}}_1$), $(\varphi, \psi)$ admits 
an energy stable decomposition $\{(\varphi_j,\psi_j)\}_{j=1}^J$. 
Thus,
\begin{align}\label{eq4.26}
a((\cI-\alpha\cP_0)\widehat{\cP} v,w) 
&= a(\widehat{\cP} v, (\cI-\alpha\cQ_0)w) \\
&= a(\widehat{\cP} v, \psi) \nonumber\\
&= \sum_{i=1}^J\sum_{j=1}^J  a(\cR_i^\dag \ctP_i v, \cS_j^\dag \psi_j) 
\nonumber \\
&\leq \sum_{i=1}^J\sum_{j=1}^J \theta_{ij}\|\cP_i v\|_a\normW{\cS_j^\dag\psi_j} 
\nonumber \\
&\leq \omeV \omeW  \|v\|_a \sum_{j=1}^J N_j(\widehat{\Theta})  
\normW{\cS_j^\dag \psi_j}  \nonumber \\
&\leq \omeV \omeW^2   N(\widehat{\Theta}) \|v\|_a  
\sum_{j=1}^J \normWj{\psi_j}   \nonumber \\
&\leq \omeV \omeW^2 \tCW N(\widehat{\Theta}) \|v\|_a \normW{\psi} \nonumber \\
&\leq \omeV \omeW^2 \tCW N(\widehat{\Theta}) 
\bigl(1+\alpha \omeV \omeW C_{a_j} \beta^{-1}_{a_j} \bigr) 
\|v\|_a \normW{w},  \nonumber
\end{align}
where we have used \eqref{eq4.10f} to obtain the last inequality.
The above inequality in turn implies that
\begin{align*} 
\|(\cI-\alpha\cP_0)\widehat{\cP} v\|_a 
\leq \omeV \omeW^2 \tCW N(\widehat{\Theta}) 
\bigl(1+ \alpha \omeV \omeW C_{a_j} \beta^{-1}_{a_j} \bigr) \|v\|_a,
\end{align*}
and 
\begin{align*}
\|\cP_{\mbox{\tiny hy}} v\|_a 
&\leq \alpha\|\cP_0 v\|_a + \|(\cI-\alpha\cP_0)\widehat{\cP} v\|_a \\
&\leq \alpha\omeV \omeW + \omeV \omeW^2 \tCW N(\widehat{\Theta}) 
\bigl(1+ \alpha \omeV \omeW C_{a_j} \beta^{-1}_{a_j}\bigr) \|v\|_a.
\end{align*}
Hence, \eqref{eq4.25} holds and the proof is complete.
\end{proof}

Next, we derive a lower bound estimate for $\|\cP_{\mbox{\tiny hy}}\|_a$.
The following proposition is an analogue of Proposition \ref{prop4.2}. 

\begin{proposition}\label{prop4.4}
Under assumptions (SA$_0$), ($\widetilde{\mbox{SA}}_1$), (SA$_2$)--(SA$_4$), 
along with the assumption $\mbox{range} \bigl( I - \alpha\cQ_0 \bigr) = W$ 
the following estimate holds:
\begin{align}\label{eq4.27a}
\|\cP_{\mbox{\tiny hy}} v\|_a 
&\geq K_1^{-1} \|v\|_a \qquad\forall v\in V, 
\end{align}
provided that $\deltaV\tCW N\bigl(\widehat{\Theta}\bigr)
+\alpha \omeV \widehat{\delta}_V<J$.  Where
\begin{align}\label{eq4.27b}
K_1:= \frac{1}{J - \deltaV\tCW N\bigl(\widehat{\Theta}\bigr)
-\alpha \omeV \widehat{\delta}_V}.
\end{align}
Consequently, operator $\cP_{\mbox{\tiny hy}}$ is invertible.
\end{proposition}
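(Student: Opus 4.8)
The plan is to establish the lower bound by the same dual, Schatz--type route used for $\Pad$ in Proposition~\ref{prop4.2}: I will produce an inequality of the form $J\|v\|_a\le\|\cP_{\mbox{\tiny hy}}v\|_a+\bigl(\deltaV\tCW N(\widehat{\Theta})+\alpha\omeV\widehat{\delta}_V\bigr)\|v\|_a$ and then rearrange. Fix $v\in V$ and test against an arbitrary $w\in W$. Because of the hypothesis $\mbox{range}(\cI-\alpha\cQ_0)=W$, every such $w$ already lies in $\mbox{range}(\cI-\alpha\cQ_0)$, so assumption ($\widetilde{\mbox{SA}}_1$) supplies a \emph{purely fine} decomposition $w=\sum_{j=1}^J\cS_j^\dag\psi_j$ with $\sum_{j=1}^J\normWj{\psi_j}\le\tCW\normW{w}$ by \eqref{eq4.23y}. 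This is precisely the step that removes the coarse space from the test function and leaves only the index range $1,\dots,J$ matching $\widehat{\cP}=\sum_{i=1}^J\cP_i$ and the submatrix $\widehat{\Theta}$.

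The heart of the argument is the algebraic splitting $J\,a(v,w)=a(\widehat{\cP}v,w)+\sum_{i=1}^J a\bigl((\cI-\cP_i)v,\,w\bigr)$, which is pure linearity since $\sum_{i=1}^J\cP_i=\widehat{\cP}$. I bound the $J$ defect terms first: inserting the fine decomposition of $w$ and using the defining relation \eqref{eq3.22} gives $a\bigl((\cI-\cP_i)v,\cS_j^\dag\psi_j\bigr)=a_j\bigl(\ctP_j(v-\cP_iv),\psi_j\bigr)$, so \eqref{eq4.5c} followed by the approximability bound \eqref{eq4.4x} yields $a\bigl((\cI-\cP_i)v,\cS_j^\dag\psi_j\bigr)\le\theta_{ij}\deltaV\|v\|_a\normWj{\psi_j}$; summing over $i$ and $j$, using $\sum_{i=1}^J\theta_{ij}\le N_j(\widehat{\Theta})\le N(\widehat{\Theta})$ and then \eqref{eq4.23y}, bounds the full defect by $\deltaV\tCW N(\widehat{\Theta})\|v\|_a\normW{w}$. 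For the remaining term I rewrite the hybrid operator as $\widehat{\cP}v=\cP_{\mbox{\tiny hy}}v-\alpha\cP_0(v-\widehat{\cP}v)$ (just $\cP_{\mbox{\tiny hy}}=\alpha\cP_0+(\cI-\alpha\cP_0)\widehat{\cP}$ rearranged), and combine the generalized Cauchy--Schwarz inequality \eqref{eq4.5a} with $\cP_0(v-\widehat{\cP}v)=\cR_0^\dag\ctP_0(v-\widehat{\cP}v)$, the local stability bound \eqref{eq4.3x}, and the coarse bound $\|\ctP_0(v-\widehat{\cP}v)\|_{a_0}\le\widehat{\delta}_V\|v\|_a$ of (SA$_4$) to obtain $a(\widehat{\cP}v,w)\le\|\cP_{\mbox{\tiny hy}}v\|_a\normW{w}+\alpha\omeV\widehat{\delta}_V\|v\|_a\normW{w}$.

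Assembling the two estimates gives $J\,a(v,w)\le\bigl[\|\cP_{\mbox{\tiny hy}}v\|_a+\bigl(\deltaV\tCW N(\widehat{\Theta})+\alpha\omeV\widehat{\delta}_V\bigr)\|v\|_a\bigr]\normW{w}$ for every $w$; dividing by $\normW{w}$ and taking the supremum over $0\neq w\in W$ produces $J\|v\|_a\le\|\cP_{\mbox{\tiny hy}}v\|_a+\bigl(\deltaV\tCW N(\widehat{\Theta})+\alpha\omeV\widehat{\delta}_V\bigr)\|v\|_a$, and rearranging under the stated smallness condition $\deltaV\tCW N(\widehat{\Theta})+\alpha\omeV\widehat{\delta}_V<J$ delivers \eqref{eq4.27a} with $K_1$ as in \eqref{eq4.27b}; since the lower bound is strictly positive, $\cP_{\mbox{\tiny hy}}$ is injective and hence invertible on the finite-dimensional space $V$. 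I expect the splitting step to be the main obstacle: the factor $J$ in $K_1^{-1}$ does not come from any stability constant but precisely from multiplying $a(v,w)$ by the number of fine spaces and subtracting the $J$ individual approximation defects $(\cI-\cP_i)v$, whose sizes are controlled by the small constants $\deltaV,\widehat{\delta}_V$ of (SA$_4$). In particular, the more obvious route of bounding $\sum_{j=1}^J\|\ctP_jv\|_{a_j}$ through Lemma~\ref{lem4.2} would reintroduce spurious $\omeW$ factors and a weaker constant, so it is the defect splitting, rather than a direct projection estimate, that keeps the bound clean.
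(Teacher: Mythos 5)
Your proof is correct and follows essentially the same route as the paper's: your two-step splitting $J\,a(v,w)=a(\widehat{\cP}v,w)+\sum_{i=1}^J a\bigl((\cI-\cP_i)v,w\bigr)$ combined with $\widehat{\cP}v=\cP_{\mbox{\tiny hy}}v+\alpha\cP_0\bigl(\widehat{\cP}v-v\bigr)$ is algebraically identical to the paper's single identity $v=\frac1J\bigl[\cP_{\mbox{\tiny hy}}v+\sum_{i=1}^J(v-\cP_i v)+\alpha\cP_0(\widehat{\cP}v-v)\bigr]$, and the three resulting terms are estimated with exactly the same tools (\eqref{eq4.5a}, \eqref{eq3.22} with \eqref{eq4.5c} and \eqref{eq4.4x}, and \eqref{eq4.3x} with the $\widehat{\delta}_V$ bound of (SA$_4$)). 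The only cosmetic difference is that you invoke $\mbox{range}(\cI-\alpha\cQ_0)=W$ at the outset to decompose $w$ directly, whereas the paper decomposes $\psi=(\cI-\alpha\cQ_0)w$ and uses surjectivity at the end to identify the supremum over $\psi$ with that over $W$.
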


\begin{proof}
For any $v\in V$ and $w\in W$. Let $\psi:=(\cI-\alpha \cQ_0)w\in 
\mbox{range}(\cI-\alpha \cQ_0)$ and $u:=\cP_{\mbox{\tiny hy}} v$. 
Assumption ($\widetilde{\mbox{SA}}_1$)
ensures that $\psi$ has an energy stable decompositions
$\{\psi_j\}_{j=1}^J$ with $\psi_j\in W_j$, that is, 
\begin{alignat}{2}\label{eq4.28}
&\psi= \sum_{j=1}^J \cS_j^\dag \psi_j & \qquad \mbox{and} 
&\qquad \sum_{j=1}^J \normWj{\psi_j} \leq \tCW  \normW{\psi}.
\end{alignat}
Using the following identity
\begin{align*}
v = \frac{1}{J} \Bigl[ u + \sum^{J}_{i = 1} \bigl(v - \cP_i v \bigr) 
+ \alpha \cP_0 \bigl( \widehat{\cP}v - v \bigr) \Bigr],
\end{align*}
(SA$_4$), \eqref{eq4.3x} and \eqref{eq4.28} we get
\begin{align*}
a(v,\psi) &= \frac{1}{J} \Bigl[ a(u, \psi) 
+ \sum^{J}_{i = 1}a \bigl(v - \cP_i v, \psi \bigr) 
+ \alpha a\bigl(\cP_0 \bigl(\widehat{\cP}v - v\bigr), \psi\bigr) \Bigr] \\
& \leq \frac{1}{J} \Bigl[ \| u \|_a  \normW{\psi} 
+ \sum^J_{j = 1} \sum^{J}_{i = 1}a \bigl(v - \cP_i v, \cS_j^\dag \psi_j \bigr) 
+ \alpha\bigl\| \cP_0 \bigl(\widehat{\cP}v -v\bigr)\bigr\|_a \normW{\psi }\Bigr]\\
& \leq \frac{1}{J} \Bigl[ \| u \|_a \normW{\psi} 
+ \sum^J_{j = 1} \sum^{J}_{i = 1}a_j \bigl( \ctP_j \bigl(v - \cP_i v \bigr), \psi_j \bigr) 
+\alpha \omeV \bigl \| \ctP_0 \bigl(\widehat{\cP}v - v\bigr) \bigr \|_{a_0}\normW{\psi} \Bigr] \\
& \leq  \frac{1}{J} \Bigl[\| u \|_a \normW{\psi} 
+ \sum^J_{j = 1} \sum^{J}_{i = 1}\bigl \| \ctP_j \bigl(v-\cP_i v\bigr) \bigr\|_{a_j}\normWj{\psi_j} 
+ \alpha\omeV \widehat{\delta}_V \| v \|_a \normW{\psi} \Bigr] \\
& \leq \frac{1}{J} \Bigl[ \| u \|_a \normW{\psi} 
+ \sum^J_{j = 1} \sum^{J}_{i = 1} \theta_{ij} \deltaV \|v \|_{a} \normWj{\psi_j}
+ \alpha\omeV \widehat{\delta}_V  \| v \|_a \normW{\psi} \Bigr] \\
& \leq \frac{1}{J} \Bigl[ \| u \|_a \normW{\psi} 
+ \sum^J_{j = 1} N_j \bigl(\widehat{\Theta} \bigr) \deltaV \|v \|_{a} \normWj{\psi_j} 
+ \alpha\omeV \widehat{\delta}_V  \| v \|_a\normW{\psi} \Bigr] \\
& \leq \frac{1}{J} \Bigl[ \| u \|_a \normW{\psi} 
+ N\bigl(\widehat{\Theta}\bigr) \deltaV \|v \|_{a} \sum^J_{j = 1}  
\normWj{\psi_j}
+ \alpha\omeV \widehat{\delta}_V  \| v \|_a \normW{\psi} \Bigr] \\
& \leq \frac{1}{J} \Bigl[ \| u \|_a \normW{\psi} 
+ \tCW N \bigl(\widehat{\Theta} \bigr) \deltaV \|v \|_{a}  \normW{\psi} 
+ \alpha\omeV \widehat{\delta}_V  \| v \|_a \normW{\psi} \Bigr].
\end{align*}
The desired estimate follows from the assumption 
$\mbox{range}(I -\alpha \cQ_0) = W$.
\end{proof}

\begin{remark}
We note that the assumption $\mbox{\rm range }(\cI-\alpha\cQ_0)=W$ is 
equivalent to asking $\cI-\alpha\cQ_0$ to be invertible, which holds
for sufficiently small or large relaxation parameter $\alpha$.
\end{remark}

Combining Propositions  \ref{prop4.3} and \ref{prop4.4} we
obtain our third main theorem of this paper.

\begin{theorem}\label{thm4.3}
Under assumptions (SA$_0$)--(SA$_4$) and 
$\mbox{range} \bigl( I - \alpha\cQ_0 \bigr) = W$ 
the following condition number estimate holds:
\begin{align}\label{eq4.50}
\kappa_a(\cP_{\mbox{\tiny hy}})
\leq \omeV \omeW \bigl[ \alpha+ \omeW \tCW N(\widehat{\Theta}) 
\bigl(1+ \alpha\omeV \omeW C_{a_j} \beta_{a_j}^{-1} \bigr) 
\bigr]\, K_1.
\end{align}
\end{theorem}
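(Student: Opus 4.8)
The plan is to assemble Theorem \ref{thm4.3} directly from the upper and lower bounds already furnished by Propositions \ref{prop4.3} and \ref{prop4.4}, in complete parallel with the way Theorem \ref{thm4.1} was deduced from Propositions \ref{prop4.1} and \ref{prop4.2}. Here $\kappa_a(\cP_{\mbox{\tiny hy}})$ is understood as the analogue of \eqref{eq4.18x}--\eqref{eq4.18y}, namely $\kappa_a(\cP_{\mbox{\tiny hy}}):=\|\cP_{\mbox{\tiny hy}}\|_a\,\|\cP_{\mbox{\tiny hy}}^{-1}\|_a$ with $\|\cP_{\mbox{\tiny hy}}\|_a:=\sup_{0\neq v\in V}\|\cP_{\mbox{\tiny hy}} v\|_a/\|v\|_a$. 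Since the hypotheses of both propositions are exactly assumptions (SA$_0$)--(SA$_4$) together with $\mbox{range}(I-\alpha\cQ_0)=W$ (noting that ($\widetilde{\mbox{SA}}_1$) is the form of (SA$_1$) used here, and that the smallness condition $\deltaV\tCW N(\widehat{\Theta})+\alpha\omeV\widehat{\delta}_V<J$ is in force so that $K_1$ in \eqref{eq4.27b} is positive and finite), both bounds are simultaneously available.

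First I would read the upper bound off Proposition \ref{prop4.3}: dividing the conclusion \eqref{eq4.25} by $\|v\|_a$ and taking the supremum over $0\neq v\in V$ gives at once
\[
\|\cP_{\mbox{\tiny hy}}\|_a \leq \omeV \omeW \bigl[ \alpha+ \omeW \tCW N(\widehat{\Theta}) \bigl(1+ \alpha\omeV \omeW C_{a_j} \beta_{a_j}^{-1} \bigr) \bigr].
\]
Second I would convert the lower bound \eqref{eq4.27a} of Proposition \ref{prop4.4} into a norm bound on the inverse. Because that proposition also asserts invertibility of $\cP_{\mbox{\tiny hy}}$, every $u\in V$ can be written as $u=\cP_{\mbox{\tiny hy}} v$ with $v=\cP_{\mbox{\tiny hy}}^{-1}u$; substituting into \eqref{eq4.27a} yields $\|u\|_a\geq K_1^{-1}\|\cP_{\mbox{\tiny hy}}^{-1}u\|_a$, that is $\|\cP_{\mbox{\tiny hy}}^{-1}u\|_a\leq K_1\|u\|_a$ for every $u\in V$, and hence $\|\cP_{\mbox{\tiny hy}}^{-1}\|_a\leq K_1$.

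Finally I would multiply the two bounds to obtain
\[
\kappa_a(\cP_{\mbox{\tiny hy}})=\|\cP_{\mbox{\tiny hy}}\|_a\,\|\cP_{\mbox{\tiny hy}}^{-1}\|_a\leq \omeV \omeW \bigl[ \alpha+ \omeW \tCW N(\widehat{\Theta}) \bigl(1+ \alpha\omeV \omeW C_{a_j} \beta_{a_j}^{-1} \bigr) \bigr]\,K_1,
\]
which is exactly \eqref{eq4.50}. There is essentially no analytic obstacle remaining at this stage: all the substantive work — the strengthened Cauchy--Schwarz manipulations via (SA$_2$), the energy-stability estimate through ($\widetilde{\mbox{SA}}_1$), the conjugacy identity of Lemma \ref{lem4.8}, and the Schatz-type lower-bound argument — has already been absorbed into Propositions \ref{prop4.3} and \ref{prop4.4}. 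The only point deserving (minor) care is the legitimacy of passing from the coercivity-type lower bound \eqref{eq4.27a} to the operator-norm bound on $\cP_{\mbox{\tiny hy}}^{-1}$, which is precisely why the invertibility guaranteed by Proposition \ref{prop4.4} must be invoked explicitly before the final multiplication.
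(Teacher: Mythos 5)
Your proposal is correct and is essentially identical to the paper's own argument: the paper proves Theorem \ref{thm4.3} precisely by combining the upper bound of Proposition \ref{prop4.3} with the lower bound (and invertibility) of Proposition \ref{prop4.4}, exactly as you do. Your added care in noting that ($\widetilde{\mbox{SA}}_1$) is the operative form of the stable-decomposition hypothesis and that invertibility is what licenses the passage from \eqref{eq4.27a} to $\|\cP_{\mbox{\tiny hy}}^{-1}\|_a\leq K_1$ is consistent with the paper's intent.
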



\section{Application to DG discretizations for convection-diffusion problems}
\label{sec-5}
In this section we shall use our abstract framework and the 
abstract convergence theory developed in Sections \ref{sec-3}
and \ref{sec-4} to construct three types of Schwarz methods
for discontinuous Galerkin approximations
of the following general diffusion-convection problem:
\begin{alignat}{2}\label{eq5.1}
\cL u:=-\Div(D(x)\nab u) + \bb(x)\cdot \nab u +c(x) u &=f 
&&\qquad\mbox{in }\Ome,\\
u &=0 &&\qquad\mbox{on } \p\Ome, \label{eq5.2}
\end{alignat}
where $\Ome \subset \bR^d \, (d=1,2,3)$ is a bounded domain with 
Lipschitz continuous boundary $\p\Ome$. $D(x)\in \bR^{d\times d}$ 
satisfies $\lambda |\xi|^2 \leq D(x) \xi\cdot\xi\leq \Lambda |\xi|^2$
$\,\forall\xi\in \bR^d$ for some positive constants $\lambda$ and $\Lambda$. 
So \eqref{eq5.1} is uniformly elliptic in $\Ome$ \cite[Chapter 8]{GT00}.
Assume that $\bb\in H(\mbox{div, } \Ome)$ or $\bb\in [C^0(\overline{\Ome})]^d$, 
$c\in L^\infty(\Ome)$
and $f\in L^2(\Ome)$. Let $V=W=H^1_0(\Ome)$, then the variational 
formulation of \eqref{eq5.1}--\eqref{eq5.2} is defined as \cite{BA72,GT00}
\begin{align}\label{eq5.3}
\cA(u,w)=\cF(w) \qquad\forall w\in W,
\end{align}
where
\begin{align}\label{eq5.4}
\cA(u,w) &:= \int_\Ome \Bigl( D(x)\nab u\cdot \nab w + \bb(x)\cdot \nab u w
+c(x) u w \Bigr)\, dx, \\
\cF(w) &:=\int_\Ome f w\, dx. \label{eq5.5}
\end{align}

Clearly, when $\bb(x)\not\equiv 0$, the bilinear form $\cA(\cdot, \cdot)$ 
is nonsymmetric. The problem can be further classified as follows:

\medskip
(i) {\em Positive definite case:}  If $\bb$ and $c$ satisfies
\begin{align}\label{eq5.6}
c(x)-\frac12 \Div \bb(x) \geq 0 \qquad \mbox{in } \Ome.
\end{align}

\medskip
(ii) {\em Indefinite case:} If $\bb$ and $c$ satisfies
\begin{align}\label{eq5.7}
c(x)-\frac12 \Div \bb(x) < 0 \qquad \mbox{in } \Ome.
\end{align}

It is easy to check that all the conditions of 
the classical Lax-Milgram Theorem hold in the {\em positive definite case}. 
It also can be shown \cite{BA72} that
in the {\em indefinite case} all the conditions of Theorem \ref{babuska} are 
satisfied provided that problem \eqref{eq5.1}--\eqref{eq5.2} and its 
adjoint problem are uniquely solvable for arbitrary source terms. 
It is also well known \cite{BA72,GT00} that in {\em indefinite case}
the bilinear form $\cA(\cdot, \cdot)$ satisfies a G{\aa}rding-type inequality
instead of the strong coercivity.

\subsection{Discontinuous Galerkin approximations}\label{sec-5.1}
Consider a special case of \eqref{eq5.1} where $D(x) = \epsilon > 0$, 
$\mathbf{b} \in [W^{1,\infty}(\Ome)]^d$, and $c(x) = \Div(\mathbf{b}(x)) 
+ \gamma(x)$ where $\gamma \in L^\infty(\Ome)$.  To discretize 
this problem, we shall use an interior penalty discontinuous
Galerkin (IPDG) scheme developed in \cite{AD09}. For this scheme 
we require a shape-regular triangulation $\mathcal{T}_h$ of the 
domain $\Ome$.  The scheme can then be written in the form \eqref{eq2.10} where
\begin{align}
&V = W := \left\{ v \in L^{2}(\Ome) \mbox{ such that } v|_{K} 
\in \mathbb{P}_r(K) \ \forall K \in \mathcal{T}_h  \right\}, \label{eq5.8} \\
&a(u, w) := \sum_{K \in \mathcal{T}_h} \int_{K} (\gamma u w 
+( \epsilon \nabla u - \mathbf{b}u) \cdot \nabla w)\, dx 
+ \sum_{e \in \Gamma} c_e \frac{\epsilon}{| e |} \int_e [u] \cdot [w]\, ds 
\label{eq5.9} \\
&\quad+ \sum_{e \in \mathcal{E}^{\circ}_h} \int_e \{\mathbf{b}u\}_{upw} 
\cdot [w]\, ds 
- \sum_{e \in \Gamma} \int_{e} \{\epsilon \nabla_hu\} \cdot [w] \, ds
+ \sum_{e \in \Gamma^+} \int_{e} \mathbf{b \cdot n}uw\, ds, \notag \\
&f(w) := \sum_{K \in \mathcal{T}_h} \int_K f w\, dx.  \label{eq5.10}
\end{align}
Where $r\geq 1, \Gamma = \partial \Ome$, $\mathbf{n}$ is the unit outward 
normal vector to $\Gamma$, and $\Gamma^+$ indicates the outflow portion of 
$\Gamma$ defined as
\begin{align*}
\Gamma^{+} = \left\{ x \in \Gamma \mbox{ such that } \mathbf{b}(x) 
\cdot \mathbf{n}(x) \geq 0 \right\}.
\end{align*}
$[\cdot]$ and $\{\cdot\}$ are the standard jump and average operators, respectively, 
and $\{\cdot\}_{upw}$ is the upwind flux.  
To define this flux, we consider a vector valued function $\boldsymbol{\tau}$ 
defined on two neighboring elements $K_1$ and $K_2$ of $\mathcal{T}_h$ 
with common edge $e$.  Suppose that $\boldsymbol{\tau}^i 
= \boldsymbol{\tau}|_{K_i}$ for $i = 1, 2$.  Then 
$\{ \boldsymbol{\tau} \}_{upw}$ is defined on the edge $e$ as follows:
\begin{align*}
\{ \boldsymbol{\tau} \}_{upw} = \frac{1}{2}(\mbox{sign}(\mathbf{b} \cdot 
\mathbf{n}^{1}) + 1) \boldsymbol{\tau}^1 + \frac{1}{2}(\mbox{sign}(\mathbf{b} 
\cdot \mathbf{n}^{2}) + 1) \boldsymbol{\tau}^2,
\end{align*}
where $\mathbf{n}^i$ is the unit outward normal vector of $K_i$ on $e$ 
for $i = 1, 2$.  The choice of this scheme was made because it was 
shown \cite{AD09} that in the positive definite case (i.e. when 
\eqref{eq5.6} holds) this scheme satisfies (MA1) and (MA2) 
(cf. Section \ref{sec-3.2}).  

Once a discretization scheme is chosen we can begin to develop our 
space decomposition and local solvers.  In this example, we will obtain 
the space decomposition by using a nonoverlapping domain decomposition.  
Let $\mathcal{T}_H$ be a coarse mesh of $\Ome$ and $\mathcal{T}_s$ a 
nonoverlapping partition $\left\{ \Ome_j \right\}^J_{j=1}$ of $\Ome$ 
such that $\mathcal{T}_s \subseteq \mathcal{T}_H \subseteq \mathcal{T}_h$.  
Then we define
\begin{align}
&V_0 = W_0 :=  \left\{ v \in L^{2}(\Ome) \mbox{ such that }v|_{K} 
\in \mathbb{P}_r \ \forall K \in \mathcal{T}_H \right\}, \label{eq5.11} \\
&V_j = W_j :=  \left\{ v \in L^2(\Ome_j) \mbox{ such that } v|_{K} 
\in \mathbb{P}_r \ \forall K \in \mathcal{T}_h \mbox{ with } K 
\subseteq \Ome_j \right\}  \label{eq5.12}
\end{align}
for $j = 1, 2, \dots, J$ and $r\geq 1$. For the prolongation operator 
$\mathcal{R}^{\dagger}_0 = \mathcal{S}^{\dagger}_0$ we use the polynomial 
interpolation on each element $K \in \mathcal{T}_h$.
\begin{align}
\mathcal{R}^{\dagger}_0 u_0|_K = \mbox{ the interpolant of } 
u_0 \mbox{ in }\mathbb{P}_r(K)
\end{align}
for each $u_0 \in V_0$ and $K \in \mathcal{T}_h$.
For the prolongation operators $\mathcal{R}^{\dagger}_j 
= \mathcal{S}^{\dagger}_j$ we use the following natural injection into $V$:
\begin{align}
	\mathcal{R}^{\dagger}_j u_j = 
		\left\{\begin{array}{l l}
			u_j & \qquad \mbox{ in } \Ome_j \\
			0 & \qquad \mbox{ in } \Ome \setminus \Ome_j.
		\end{array}\right.
\end{align}
For the local bilinear forms $a_j(\cdot,\cdot)$ we use the exact local 
solvers defined by
\begin{align}
a_j(u_j, w_j) := a( \mathcal{R}^{\dagger}_j u_j, 
\mathcal{R}^{\dagger}_j w_j) \qquad \forall \ u_j, w_j \in V_j \label{eq5.13}
\end{align}
for $j = 0, 1, \dots, J$.
Note that in this example we only have one set of subspaces 
$\{V_j\}^{J}_{j=0}$ and one set of prolongation operators 
$\{\mathcal{R}^{\dagger}_j\}^{J}_{j=0}$ so we shall only have one set of 
projection-like operators $\{\mathcal{P}_j\}^{J}_{j=0}$ defined in 
\eqref{eq3.22} and \eqref{eq3.24}.  Using these projection-like 
operators we can then build the Schwarz operators $\mathcal{P}_{ad}$, 
$\mathcal{P}_{mu}$, and $\mathcal{P}_{hy}$ defined in \eqref{eq3.25}, 
\eqref{eq3.36}, and \eqref{hyb3} respectively.

\subsection{Numerical Experiments}\label{sec-5.2}
In this section we present several $1$-D numerical experiments to gauge 
the theoretical results proved in the previous section.  
For these experiments we concentrated on equation \eqref{eq5.1} in the 
domain $\Ome = (0, 1)$ with the following choices of constant coefficient:

\medskip
{\bf Test 1.} $D(x) = 1$, $b(x) = 1,000$, and $c(x) = 1$.

\smallskip
{\bf Test 2.} $D(x) = 1$, $b(x) = 2,000$, and $c(x) = 1$.

\smallskip
{\bf Test 3.} $D(x) = 1$, $b(x) = 10,000$, and $c(x) = 1$.

\smallskip
{\bf Test 4.} $D(x) = 1$, $b(x) = 100,000$, and $c(x) = 1$.

\medskip
Note that these choices of coefficients put us in the convection dominated 
regime and fit the criteria of the positive definite case characterized 
by \eqref{eq5.6}.  For this reason we are able to use the discretization 
scheme and domain decomposition techniques described in Section \ref{sec-5.1}.
In these experiments we use a uniform fine mesh size $h = 1/256$ and 
a uniform coarse mesh size $H = 1/64$.  The equations are solved 
numerically using standard GMRES, GMRES after using $\mathcal{P}_{ad}$ 
preconditioning, the multiplicative Schwarz iterative method \eqref{eq3.41}, 
and GMRES after using $\mathcal{P}_{hy}$ preconditioning.  To verify the 
dependence of $\kappa_a(\mathcal{P}_{ad})$ and $\kappa_a(\mathcal{P}_{hy})$,  
we use a varying number of subdomains $J = 4, 8, 16, 32, 64$.

Our first goal in these experiments is to compare the performance of the 
Schwarz methods to that of standard GMRES in order to verify the 
usefulness of such methods.  We would like to verify numerically 
that the estimates given in previous sections are tight.  In particular, 
we would like to find an example that shows that 
$\kappa_A(P_{ad})$ does in fact depend linearly on the 
number of subdomains $J$ as predicted in Theorem \ref{thm4.2}.  
For multiplicative Schwarz iteration we would like to estimate 
$\| E_{mu} \|_A$, noting that if this norm is less than $1$ it 
guarantees convergence of the method.  If not, we shall
need to rely on the spectral radius $\rho(E_{mu})$ to guarantee 
this convergence.

Tables \ref{Tab1}--\ref{Tab4} collect the test results on 
the additive, multiplcative, and hybrid Schwarz methods proposed
in Section \ref{sec-3}. Where $J =\mbox{NA}$ represents 
the original system with no preconditioning. From these numerical 
results we can make the following observations:

\begin{enumerate}
\item[(a)] Any of these methods offers an improvement in terms of the CPU 
time needed to solve the system when compared to solving the system using 
standard GMRES.
\item[(b)] GMRES after using $\mathcal{P}_{ad}$ or $\mathcal{P}_{hy}$ for 
preconditioning performs better when the number of subdomains $J$ is 
not too large.
\item[(c)] In all of these tests $\kappa_A(P_{ad})$ and $\kappa_A(P_{hy})$ 
depend on the number of subdomains $J$.  Particularly in \textbf{Test 2}, 
we see an example that exhibits approximate linear dependence. 
See figure \ref{fig6.1}.
\item[(d)] For $\| E_{mu} \|_A$ we do not observe such a strong dependence 
on the number of subdomains $J$.
\item[(e)] In these tests $\| E_{mu} \|_A$ is greater than $1$; thus, we 
cannot rely upon this as an indicator for convergence of the multiplicative 
Schwarz iterative method.
\item[(f)] $\kappa_A$ is not a unique metric in judging the convergence of 
GMRES after preconditioning with $P_{ad}$ and $P_{hy}$.  For instance,
in \textbf{Test 4} $\kappa_A(P_{ad})$ decreases while the number of 
iterations necessary for GMRES increases as $J$ increases.  This 
is opposite of the behavior that is observed in the previous tests.
\end{enumerate}

\medskip
Our numerical experiments verify that $\kappa_A$ is not a unique metric
for the convergence of GMRES.  Therefore, we must rely on other metrics
to predict the convergence behavior of GMRES.  The following theorem is
used in \cite{TW05} to test convergence of GMRES after Schwarz
preconditioning in the indefinite case:
\begin{theorem}[\cite{EES83}]\label{thm6.1}
Consider the linear system $A \mathbf{x} = \mathbf{b}$ where
$A \in \mathbb{R}^{d \times d}$ and $\mathbf{x}, \mathbf{b} \in \mathbb{R}^d$.
If the symmetric part of $A$ is positive definite, then after $k$ step of GMRES, 
the norm of the residual $\mathbf{r_k}:=\mathbf{b}-A \mathbf{x}^{(k)}$
is bounded by
\begin{align*}
\|\mathbf{r_k} \|_2 \leq \left(1 - \frac{c_p^2}{C_p^2}\right)^{k/2} 
\| \mathbf{r_0} \|_2, 
\end{align*}
where $c_p>0$ is the minimal eigenvalue of the symmetric part of $A$
and $C_p$ is the operator norm of $A$ given by
\begin{align*}
c_p := \min_{\mathbf{u} \in \mathbb{R}^d} \frac{\langle \mathbf{u}, 
A \mathbf{u} \rangle}{\langle \mathbf{u} , \mathbf{u}\rangle}, 
\qquad C_p := \max_{\mathbf{u} \in \mathbb{R}^d} 
\frac{ \|A\mathbf{u} \|_2}{\|\mathbf{u} \|_2}.
\end{align*}
\end{theorem}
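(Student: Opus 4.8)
The plan is to exploit the defining optimality property of GMRES together with a single-step contraction estimate for a fixed Richardson-type iteration. First I would recall that the $k$-th GMRES residual satisfies the minimal-residual characterization
\begin{align*}
\|\mathbf{r}_k\|_2 = \min_{\substack{p\in\mathcal{P}_k\\ p(0)=1}} \|p(A)\mathbf{r}_0\|_2,
\end{align*}
where $\mathcal{P}_k$ denotes the polynomials of degree at most $k$. Because this is a minimum over all admissible polynomials, any single convenient choice of $p$ furnishes an upper bound; in particular $\|\mathbf{r}_k\|_2 \leq \|q(A)\mathbf{r}_0\|_2$ for every $q\in\mathcal{P}_k$ with $q(0)=1$.

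The second step is to produce a fixed scalar $\alpha>0$ for which $I-\alpha A$ is a contraction in the Euclidean operator norm. For an arbitrary $\mathbf{v}\in\mathbb{R}^d$ I would expand
\begin{align*}
\|(I-\alpha A)\mathbf{v}\|_2^2 = \|\mathbf{v}\|_2^2 - 2\alpha\,\langle \mathbf{v}, A\mathbf{v}\rangle + \alpha^2\|A\mathbf{v}\|_2^2.
\end{align*}
The hypothesis that the symmetric part of $A$ is positive definite enters here: since $\langle \mathbf{v}, A\mathbf{v}\rangle = \langle \mathbf{v}, \tfrac12(A+A^T)\mathbf{v}\rangle$, the definition of $c_p$ gives $\langle \mathbf{v}, A\mathbf{v}\rangle \geq c_p\|\mathbf{v}\|_2^2$, while $\|A\mathbf{v}\|_2 \leq C_p\|\mathbf{v}\|_2$ by definition of $C_p$. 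Substituting yields $\|(I-\alpha A)\mathbf{v}\|_2^2 \leq (1-2\alpha c_p+\alpha^2 C_p^2)\|\mathbf{v}\|_2^2$, and minimizing the quadratic in $\alpha$ at $\alpha = c_p/C_p^2$ collapses the bracket to $1-c_p^2/C_p^2$. Hence $\|I-\alpha A\|_2 \leq (1-c_p^2/C_p^2)^{1/2}$ as an operator-norm bound with this fixed $\alpha$.

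The final step is to combine the two ingredients. Taking the trial polynomial $q(t) = (1-\alpha t)^k$, which has degree $k$ and $q(0)=1$, the optimality of GMRES and submultiplicativity of the operator norm give
\begin{align*}
\|\mathbf{r}_k\|_2 \leq \|(I-\alpha A)^k\mathbf{r}_0\|_2 \leq \|I-\alpha A\|_2^k\,\|\mathbf{r}_0\|_2 \leq \Bigl(1-\frac{c_p^2}{C_p^2}\Bigr)^{k/2}\|\mathbf{r}_0\|_2,
\end{align*}
which is the claimed estimate. The step I expect to be most delicate is the contraction estimate in the second paragraph, specifically the insistence on a \emph{fixed} $\alpha$ rather than the pointwise-optimal one. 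If one optimized $\alpha$ separately for each vector, one would control only $\|(I-\alpha A)\mathbf{r}_0\|_2$ for the single vector $\mathbf{r}_0$ and could not iterate through the power $(I-\alpha A)^k$; choosing one $\alpha = c_p/C_p^2$ uniformly in $\mathbf{v}$ is what converts the quadratic estimate into a genuine operator-norm contraction, so that the $k$-th power bound follows by submultiplicativity. A minor point to verify is that the Rayleigh-quotient definition of $c_p$ coincides with the minimal eigenvalue of the symmetric part $\tfrac12(A+A^T)$, which is immediate since the skew-symmetric part contributes nothing to $\langle \mathbf{v}, A\mathbf{v}\rangle$.
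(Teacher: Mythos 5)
Your proof is correct. Note that the paper itself offers no proof of this statement: it is quoted as a known result from \cite{EES83} and used only to interpret the numerical experiments, so there is no internal argument to compare against. Your argument --- GMRES residual optimality, the one-parameter Richardson polynomial $q(t)=(1-\alpha t)^k$, and the operator-norm contraction $\|I-\alpha A\|_2\leq(1-c_p^2/C_p^2)^{1/2}$ at the fixed value $\alpha=c_p/C_p^2$ --- is the classical Eisenstat--Elman--Schultz proof in its standard modern form. One small remark on the point you flag as delicate: the pointwise-optimal choice of $\alpha$ is not actually unusable; the original argument proceeds recursively, bounding $\|\mathbf{r}_{k+1}\|_2\leq\min_{\alpha}\|(I-\alpha A)\mathbf{r}_k\|_2\leq(1-c_p^2/C_p^2)^{1/2}\|\mathbf{r}_k\|_2$ one step at a time (using that $(1-\alpha t)p_k(t)$ is admissible at step $k+1$), so the per-vector optimization is iterated through the recursion rather than through a matrix power. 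Both routes give exactly the stated bound; your fixed-$\alpha$ version simply trades the recursion for submultiplicativity of the operator norm.
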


Unfortunately, this theorem cannot be applied in our case because we are
not guaranteed that $P_{ad}$ and $P_{hy}$ have positive definite symmetric
part (i.e., $c_p>0$).  In the tests previously done, we find that
these operators can be indefinite (i.e., $c_p<0$). Another result that
could be of help in this area is the following theorem (cf. \cite{TB97}).

\begin{theorem}\label{thm6.2}
Consider the linear system $A \mathbf{x} = \mathbf{b}$ where
$A \in \mathbb{R}^{d \times d}$ and $\mathbf{x}, \mathbf{b} \in \mathbb{R}^d$.  Further suppose that $A$ is diagonalizable.  Then after $k$ steps of
GMRES, the residual $\mathbf{r}_k:=\mathbf{b}-A \mathbf{x}^{(k)}$ satisfies
\begin{align*}
\frac{\| \mathbf{r}_k \|_2}{\| \mathbf{b} \|_2} 
\leq \kappa_2(V) \inf_{\substack{p \in \mathbb{P}_k \\ p(0) = 1 }} 
\sup_{\lambda \in \sigma(A)} | p (\lambda) |,
\end{align*}
where $V$ is a nonsingular matrix of eigenvectors of $A$ and $\sigma (A)$
denotes the spectrum of $A$.
\end{theorem}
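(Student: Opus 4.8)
The plan is to exploit the defining optimality property of GMRES, namely that it minimizes the Euclidean residual norm over the affine Krylov subspace, and then to convert this minimization into a polynomial approximation problem on the spectrum of $A$. Since the statement is normalized by $\|\mathbf{b}\|_2$, we take the initial guess $\mathbf{x}^{(0)} = 0$, so that $\mathbf{r}_0 = \mathbf{b}$; this is exactly the normalization implicit in the bound.

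First I would recall that the $k$-th GMRES iterate satisfies $\mathbf{x}^{(k)} \in \mathcal{K}_k(A, \mathbf{b}) := \mathrm{span}\{\mathbf{b}, A\mathbf{b}, \ldots, A^{k-1}\mathbf{b}\}$, so that $\mathbf{x}^{(k)} = q(A)\mathbf{b}$ for some polynomial $q$ of degree at most $k-1$. Consequently the residual takes the form
\begin{align*}
\mathbf{r}_k = \mathbf{b} - A\mathbf{x}^{(k)} = \bigl(I - Aq(A)\bigr)\mathbf{b} = p(A)\mathbf{b},
\end{align*}
where $p(z) := 1 - zq(z)$ has degree at most $k$ and satisfies $p(0) = 1$. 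Since GMRES selects the iterate minimizing $\|\mathbf{r}_k\|_2$, this yields the characterization
\begin{align*}
\|\mathbf{r}_k\|_2 = \inf_{\substack{p \in \mathbb{P}_k \\ p(0) = 1}} \|p(A)\mathbf{b}\|_2 \leq \Bigl( \inf_{\substack{p \in \mathbb{P}_k \\ p(0) = 1}} \|p(A)\|_2 \Bigr) \|\mathbf{b}\|_2.
\end{align*}

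Next I would invoke the diagonalizability of $A$. Writing $A = V\Lambda V^{-1}$ with $\Lambda = \mathrm{diag}(\lambda_1, \ldots, \lambda_d)$ and $V$ the nonsingular matrix of eigenvectors, we have $p(A) = V p(\Lambda) V^{-1}$ for every polynomial $p$, whence, by submultiplicativity of the operator norm,
\begin{align*}
\|p(A)\|_2 \leq \|V\|_2 \, \|p(\Lambda)\|_2 \, \|V^{-1}\|_2 = \kappa_2(V) \max_{1 \leq i \leq d} |p(\lambda_i)| = \kappa_2(V) \sup_{\lambda \in \sigma(A)} |p(\lambda)|,
\end{align*}
using that $p(\Lambda)$ is diagonal with entries $p(\lambda_i)$. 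Substituting this into the previous display and dividing by $\|\mathbf{b}\|_2$ gives the claimed estimate.

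The only genuinely substantive step is the polynomial characterization of the GMRES residual, which rests entirely on the definition of the method as minimizing $\|\mathbf{b} - A\mathbf{x}\|_2$ over $\mathbf{x} \in \mathcal{K}_k(A,\mathbf{b})$; the remaining estimates are routine applications of the spectral decomposition and the triangle/submultiplicative inequalities. I would emphasize in the write-up that the factor $\kappa_2(V)$ is precisely the price paid for non-normality: when $A$ is normal one may take $V$ unitary so that $\kappa_2(V) = 1$, and the bound collapses to the familiar min-max polynomial estimate over $\sigma(A)$. This is the relevant feature for the present paper, since the preconditioned operators $P_{\mbox{\tiny ad}}$ and $P_{\mbox{\tiny hy}}$ are in general nonsymmetric, so the $\kappa_2(V)$ factor cannot be dropped.
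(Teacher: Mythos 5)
Your proof is correct and complete: the reduction of the GMRES residual to the polynomial minimization $\|\mathbf{r}_k\|_2 = \inf_{p(0)=1}\|p(A)\mathbf{b}\|_2$, followed by the diagonalization bound $\|p(A)\|_2 \leq \kappa_2(V)\max_i|p(\lambda_i)|$, is exactly the standard argument. Note that the paper itself offers no proof of this statement --- it quotes the result directly from the cited reference (Trefethen and Bau) --- and your argument is precisely the one given there, including the correct observation that the normalization by $\|\mathbf{b}\|_2$ presupposes the zero initial guess $\mathbf{x}^{(0)}=0$ so that $\mathbf{r}_0=\mathbf{b}$.
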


The above theorem says that the spread of the spectrum is a metric to
judge the performance of GMRES with GMRES performing better when the
spectrum of $A$ is clustered.  With this theorem in mind, let us examine
the spectrum of the matrix $A$ and $P_{ad}$ for $J = 4, 8, 16, 32, 64$
obtained in \textbf{Test 2} and \textbf{Test 4}.

Note that in Figure \ref{fig6.2}(a) and Figure \ref{fig6.3}(a) the spectrum
has a large spread which is consistent with the fact that GMRES performed
poorly on the original system without preconditioning.  We also see that
after preconditioning, the spectrum of $P_{ad}$ is clustered which
corresponds to improved performance of GMRES after preconditioning
with $P_{ad}$.  Lastly, we note that as the number of subdomains $J$
increases, the spread of the spectrum of $P_{ad}$ increases.  This
corresponds to a decreased performance in GMRES after preconditioning
with $P_{ad}$ as $J$ increases.

This result leads us to believe that to accurately judge the
behavior of GMRES after preconditioning one needs to analyze the spectrum of
the preconditioned system.  Similarly, we find that to accurately predict
the performance of the multiplicative Schwarz iterative method one needs to
analyze the spectral radius of $E_{mu}$.

Another property of $\kappa_A(P_{ad})$ that is of interest is
its dependence on the fine mesh parameter $h$ and the coarse mesh parameter $H$.
It was shown in \cite{FK01} that $\kappa_A(P_{ad}) = O(\frac{H}{h})$
using two-level non-overlapping domain decomposition for an IPDG
approximation of the equation \eqref{eq5.1} with $\mathbf{b} = 0$
and $c = 0$. This work uses the existing framework that is laid down
for the symmetric and positive definite case.  We would like to test
our new framework to see if this dependence on $H$ and $h$ is still true.
For this reason in {\bf Test 1 - 4} we have calculated
$\kappa_A(P_{ad})$ with $1/h = 16, 32, 64, 128$ and $1/H = 4, 8, 16$
when $J = 4$. We also calculated $\kappa_A(P_{ad})$ with
$1/h = 32, 64, 128, 256$ and $1/H = 8, 16, 32$ when $J = 8$.
From Table \ref{table 5.5} and Table \ref{table5.6} it seems
that we cannot expect $\kappa_A(P_{ad}) = O(\frac{H}{h})$ but
instead we might expect the dependence to be one of the type
\begin{align*} 
\kappa_A(P_{ad}) = O\left(\frac{H^{\sigma_1}}{h^{\sigma_2}}\right)
\end{align*}
where $\sigma_1$ and $\sigma_2$ are positive real numbers
with $\sigma_1 < \sigma_2$ or some other more complicated dependence on
$H$ and $h$.

\vfill\eject
\begin{table}[t] 
\centering
	\subfloat[GMRES after preconditioning with $P_{ad}$ and $P_{hy}$]{
	\begin{tabular}{| c | c | c | c | c | c | c |}
	\hline
	J   & \multicolumn{2}{ | c | }{ Iteration \# } & \multicolumn{2}{ | c | }{CPU Time }& \multicolumn{2}{ | c | }{$\kappa_A$} \\
   	 & \multicolumn{2}{ | c | }{of GMRES} & \multicolumn{2}{ | c | }{ } & \multicolumn{2}{| c |}{ } \\ \hline
	NA & \multicolumn{2}{| c |}{552} & \multicolumn{2}{| c |}{14.3760} & \multicolumn{2}{| c |}{$3.3893 \times 10^4$} \\ \hline
	& $P_{ad}$ & $P_{hy}$ & $P_{ad}$ & $P_{hy}$ & $P_{ad}$ & $P_{hy}$ \\ \hline
	4   & 7  &  2 & 1.3638 & 1.1922  & 460.5713 & 397.3567 \\ \hline
	8   & 7   &  3 & 1.3343 & 1.2367  & 436.7967 & 398.2544 \\ \hline
	16 & 11  &  5 & 1.6873 & 1.4040  & 438.2207 & 412.1700 \\ \hline
	32 & 17  &  8 & 2.6431 & 1.9066  & 521.3530 &  478.9537 \\ \hline
	64 & 30  &  15 & 6.2315 & 3.7889  & 774.7091 & 619.3973 \\ \hline
	\end{tabular}} \\
	\subfloat[Multiplicative Schwarz Iteration]{
	\begin{tabular} {| c | c | c | c | c |}
		\hline
		J & Iterations \# of & CPU Time& $ \| E_{mu} \|_A$ & $\rho(E_{mu})$ \\  
		  & Mult. Schwartz & & & \\ \hline
		4 & 2 & 1.1060 & 19.8830 & $4.4793 \times 10^{-6}$ \\ \hline
		8 & 2 & 1.1016 & 19.8889 & 0.0029 \\ \hline
		16 & 3 & 1.1352 & 19.8469 & 0.0725 \\ \hline
		32 & 5 & 1.2768 & 19.7658 & 0.3179 \\ \hline
		64 & 8 & 1.7129 & 19.7176 & 0.5926 \\ \hline
	\end{tabular}
	}
	\caption{Performance of three Schwarz methods on \textbf{Test 1} } 
\label{Tab1}
\end{table} 

\begin{table}[b] 
\centering
\subfloat[GMRES after preconditioning with $P_{ad}$ and $P_{hy}$]{
	\begin{tabular}{| c | c | c | c | c | c | c |}
	\hline
	J   & \multicolumn{2}{ | c | }{ Iteration \# } & \multicolumn{2}{ | c | }{ CPU Time }& \multicolumn{2}{ | c | }{$\kappa_A$} \\
   	 & \multicolumn{2}{ | c | }{of GMRES} & \multicolumn{2}{ | c | }{ } & \multicolumn{2}{| c |}{ } \\ \hline
	NA & \multicolumn{2}{| c |}{550} & \multicolumn{2}{| c |}{14.4971} & \multicolumn{2}{| c |}{$1.7388 \times 10^4$} \\ \hline
	& $P_{ad}$ & $P_{hy}$ & $P_{ad}$ & $P_{hy}$ & $P_{ad}$ & $P_{hy}$ \\ \hline
	4   & 8 & 3 & 1.3249 & 1.2069 & 741.9511 & 699.5729 \\ \hline
	8   & 10 & 5 & 1.4463 & 1.2835 & 749.0976 & 713.3674 \\ \hline
	16 & 17 & 8 & 1.9924 & 1.5557 & 847.4815 & 800.9121 \\ \hline
	32 & 27 & 14 & 5.5602 & 2.4255 & $1.1221 \times 10^3$ & $1.0029 \times 10^3$ \\ \hline
	64 & 44 & 24 & 8.7063 & 5.3089 & $1.6247 \times 10^3$ & $1.2918 \times 10^3$ \\ \hline
	\end{tabular}}\\
	\subfloat[Multiplicative Schwarz Iteration]{
	\begin{tabular} {| c | c | c | c | c |}
		\hline
		J & Iterations \# of & CPU Time& $ \| E_{mu} \|_A$ & $\rho(E_{mu})$ \\  
		  & Mult. Schwartz & & & \\ \hline
		4 & 2 & 1.1010 & 26.4005 & 0.0011 \\ \hline
		8 & 3 & 1.1131 & 26.3187 & 0.0451 \\ \hline
		16 & 4 & 1.1679 & 26.1222 & 0.2529 \\ \hline
		32 & 6 & 1.3214 & 25.9832 & 0.5277 \\ \hline
		64 & 10 & 1.8713 & 25.9270 & 0.7167 \\ \hline
	\end{tabular}}
	\caption{Performance of three Schwarz methods on \textbf{Test 2}}
 \label{Tab2}
\end{table} \clearpage

\begin{table}[t] 
\centering
	\subfloat[GMRES after preconditioning with $P_{ad}$ and $P_{hy}$]{
	\begin{tabular}{| c | c | c | c | c | c | c |}
	\hline
	J   & \multicolumn{2}{ | c | }{ Iteration \# } & \multicolumn{2}{ | c | }{ CPU Time }& \multicolumn{2}{ | c | }{$\kappa_A$} \\
   	 & \multicolumn{2}{ | c | }{of GMRES} & \multicolumn{2}{ | c | }{ } & \multicolumn{2}{| c |}{ } \\ \hline
	NA & \multicolumn{2}{| c |}{554} & \multicolumn{2}{| c |}{14.3919} & \multicolumn{2}{| c |}{$4.0782 \times 10^3$} \\ \hline
	& $P_{ad}$ & $P_{hy}$ & $P_{ad}$ & $P_{hy}$ & $P_{ad}$ & $P_{hy}$ \\ \hline
	4   & 8 & 3 & 1.3422 & 1.1772 & 647.6787 & 615.1005 \\ \hline
	8   & 12 & 5 & 1.4953 & 1.2517 & 658.7462 & 627.0064 \\ \hline
	16 & 18 & 9 & 2.0216 & 1.5588 & 726.3005 & 690.1682 \\ \hline
	32 & 27 & 15 & 3.5402 & 2.4623 & 854.1450 & 788.5277 \\ \hline
	64 & 35 & 23 & 7.1266 & 4.9327 &  939.5190 & 816.1892 \\ \hline
	\end{tabular}}
	 \\
	\subfloat[Multiplicative Schwarz Iteration]{
	\begin{tabular} {| c | c | c | c | c |}
		\hline
		J & Iteration \# of & CPU Time& $ \| E_{mu} \|_A$ & $\rho(E_{mu})$ \\  
		  & Mult. Schwartz & & & \\ \hline
		4 & 2 & 1.1067 & 24.7399 & 0.0021 \\ \hline
		8 & 2 & 1.0982 & 24.6200 & 0.0526 \\ \hline
		16 & 3 & 1.1394 & 24.4247 & 0.2350 \\ \hline
		32 & 5 & 1.2778 & 24.2986 & 0.4369 \\ \hline
		64 & 7 & 1.6321 & 24.2524 & 0.5302 \\ \hline
	\end{tabular}}
	\caption{Performance of three Schwarz methods on \textbf{Test 3} } 
\label{Tab3}
\end{table} 

\begin{table}[b] 
\centering
	\subfloat[GMRES after preconditioning with $P_{ad}$ and $P_{hy}$]{
	\begin{tabular}{| c | c | c | c | c | c | c |}
	\hline
	J   & \multicolumn{2}{ | c | }{ Iteration \# } & \multicolumn{2}{ | c | }{ CPU Time }& \multicolumn{2}{ | c | }{$\kappa_A$} \\
   	 & \multicolumn{2}{ | c | }{of GMRES} & \multicolumn{2}{ | c | }{ } & \multicolumn{2}{| c |}{ } \\ \hline
	NA & \multicolumn{2}{| c |}{468} & \multicolumn{2}{| c |}{9.4276} & \multicolumn{2}{| c |}{$1.0769 \times 10^3$} \\ \hline
	& $P_{ad}$ & $P_{hy}$ & $P_{ad}$ & $P_{hy}$ & $P_{ad}$ & $P_{hy}$ \\ \hline
	4   & 8 & 2 & 1.3305 & 1.2039 & 103.5739 & 31.7538 \\ \hline
	8   & 11 & 2 & 1.4551 & 1.2558 & 75.7527 & 31.6954 \\ \hline
	16 & 14 & 3 & 1.8217 & 1.3019 & 56.6486 & 31.6803 \\ \hline
	32 & 13 & 5 & 2.2940 & 1.6227 & 46.4141 & 31.8710 \\ \hline
	64 & 15 & 8 & 3.7025 & 2.5950 & 44.1292 & 32.2846 \\ \hline
	\end{tabular}}
	\\
	\subfloat[Multiplicative Schwarz Iteration]{
	\begin{tabular} {| c | c | c | c | c |}
		\hline
		J & Iteration \# of & CPU Time& $ \| E_{mu} \|_A$ & $\rho(E_{mu})$ \\  
		  & Mult. Schwartz & & & \\ \hline
		4 & 2 & 1.0996 & 5.4574 & $85394 \times 10^{-9}$ \\ \hline
		8 & 2 & 1.0984 & 5.4575 & $1.0873 \times 10^{-6}$ \\ \hline
		16 & 2 & 1.1157 & 5.4575 & $6.6472 \times 10^{-4}$ \\ \hline
		32 & 2 & 1.1566 & 5.4560 & 0.0158 \\ \hline
		64 & 2 & 1.2399 & 5.4540 & 0.0678 \\ \hline
	\end{tabular}}
	\caption{Performance of three Schwarz methods on \textbf{Test 4} } 
\label{Tab4}
\end{table} \clearpage

\begin{figure}[t]
 \centering
 \subfloat[Plot of J vs. $\kappa_A(P_{ad})$]{
    \includegraphics[width=0.5\textwidth]{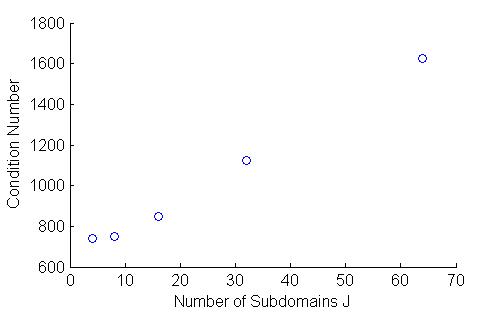}}
 \subfloat[Plot of J vs. $\kappa_A(P_{hy})$]{
    \includegraphics[width=0.5\textwidth]{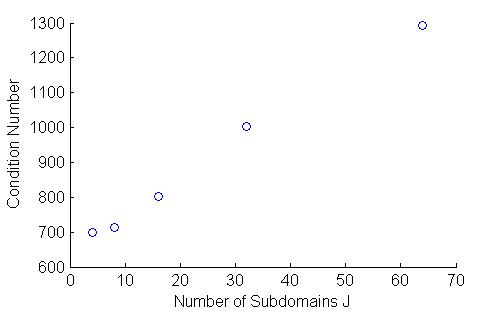}}
\caption{Dependence of $\kappa_A(P_{ad})$ and $\kappa_A(P_{hy})$ on J in \textbf{Test 2} \label{fig6.1}}
\end{figure}

\begin{figure}[b]
 \centering
 \subfloat[Plot of $\sigma(A)$]{
    \includegraphics[width=0.5\textwidth]{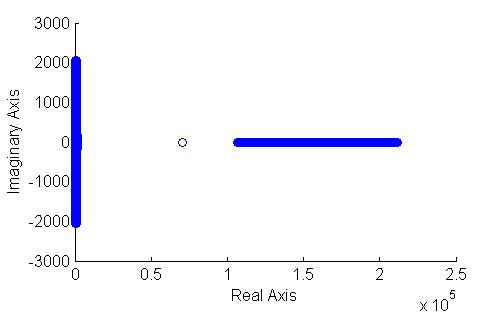}}
 \subfloat[Plot of $\sigma(P_{ad})$ with J = 4]{
    \includegraphics[width=0.5\textwidth]{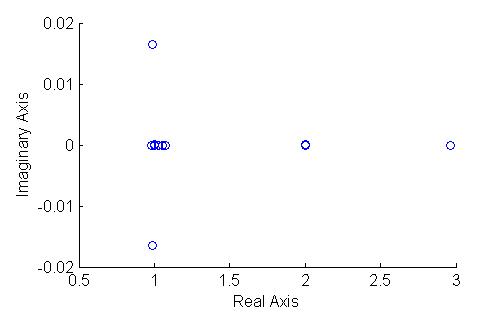}} \\
 \subfloat[Plot of $\sigma(P_{ad})$ with J = 8]{
    \includegraphics[width=0.5\textwidth]{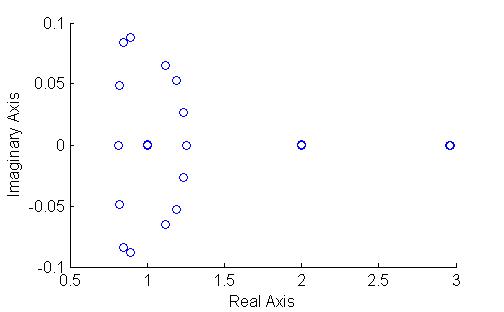}}
 \subfloat[Plot of $\sigma(P_{ad})$ with J = 16]{
    \includegraphics[width=0.5\textwidth]{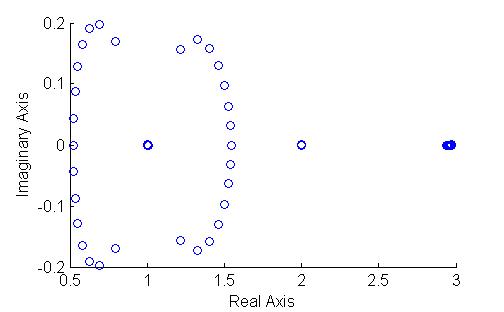}} \\
 \subfloat[Plot of $\sigma(P_{ad})$ with J = 32]{
    \includegraphics[width=0.5\textwidth]{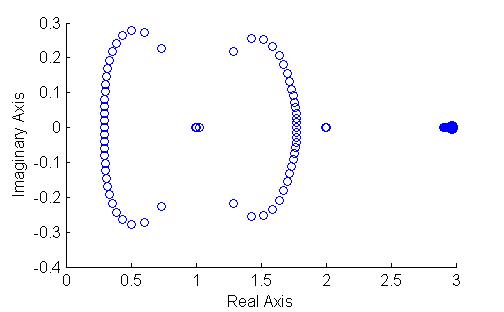}}
 \subfloat[Plot of $\sigma(P_{ad})$ with J = 64]{
    \includegraphics[width=0.5\textwidth]{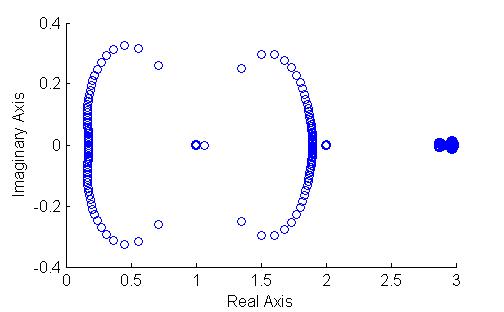}}
\caption{Spectrum plots from \textbf{Test 2} \label{fig6.2}}
\end{figure}  

\begin{figure}[ht]
 \centering
 \subfloat[Plot of $\sigma(A)$]{
    \includegraphics[width=0.5\textwidth]{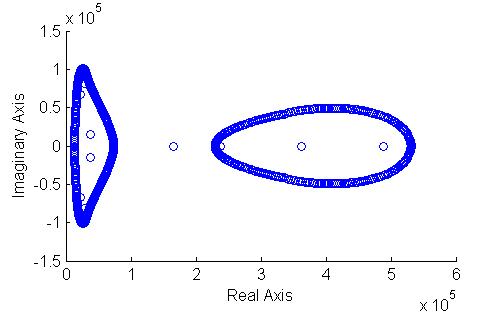}}
 \subfloat[Plot of $\sigma(P_{ad})$ with J = 4]{
    \includegraphics[width=0.5\textwidth]{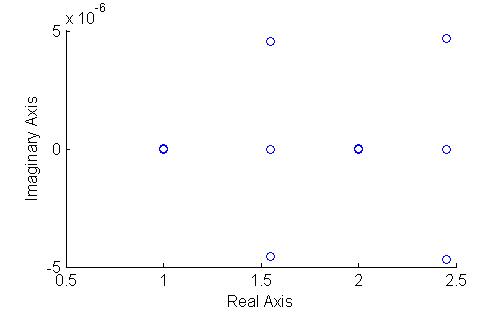}} \\
 \subfloat[Plot of $\sigma(P_{ad})$ with J = 8]{
    \includegraphics[width=0.5\textwidth]{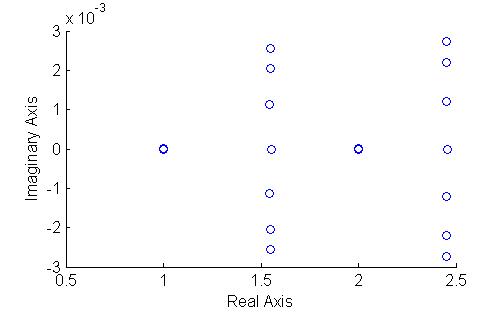}}
 \subfloat[Plot of $\sigma(P_{ad})$ with J = 16]{
    \includegraphics[width=0.5\textwidth]{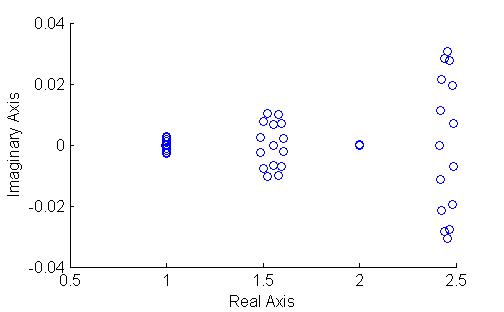}} \\
 \subfloat[Plot of $\sigma(P_{ad})$ with J = 32]{
    \includegraphics[width=0.5\textwidth]{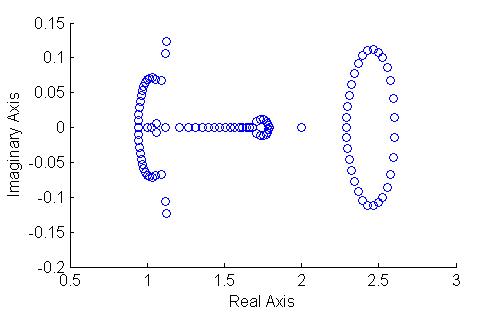}}
 \subfloat[Plot of $\sigma(P_{ad})$ with J = 64]{
    \includegraphics[width=0.5\textwidth]{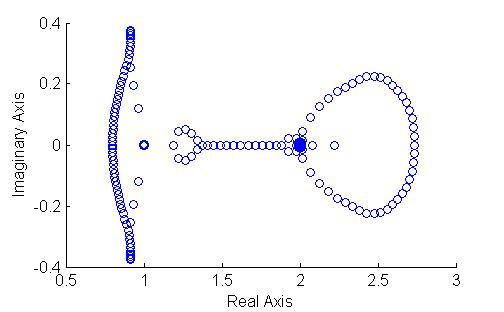}}
\caption{Spectrum plots from \textbf{Test 4} \label{fig6.3}}
\end{figure}  \clearpage

\begin{table}[ht] 
\centering
	\subfloat[\textbf{Test 1}]{
	\begin{tabular}{| c || c | c | c | c |}
		\hline
		\backslashbox{$1/H$}{$1/h$} & 16 & 32 & 64 & 128 \\ \hline \hline
		4 & 65.7321 & 421.0708 & $2.6132 \times 10^3$ & $1.5015 \times 10^4$ \\ \hline
		8 & 31.1342 & 182.6298 & $1.0812 \times 10^3$ & $5.2251 \times 10^3$ \\ \hline
		16 & 9.1835 & 66.5400 & 325.8230 & $1.3700 \times 10^3$ \\ \hline
	\end{tabular}} \\
	\subfloat[\textbf{Test 2}]{
	\begin{tabular}{| c || c | c | c | c |}
		\hline
		\backslashbox{$1/H$}{$1/h$} & 16 & 32 & 64 & 128 \\ \hline \hline
		4 & 29.9211 & 167.5776 & $1.0288 \times 10^3$ & $6.5720 \times 10^3$ \\ \hline
		8 & 16.9208 & 87.8272 & 567.0238 & $3.4546 \times 10^3$ \\ \hline
		16 & 7.9795 & 43.6354 & 249.6884 & $1.4125 \times 10^3$ \\ \hline
	\end{tabular}} \\
	\subfloat[\textbf{Test 3}]{
	\begin{tabular}{| c || c | c | c | c |}
		\hline
		\backslashbox{$1/H$}{$1/h$} & 16 & 32 & 64 & 128 \\ \hline \hline
		4 & 9.8123 & 28.7663 & 120.0237 & 598.6388 \\ \hline
		8 & 8.1720 & 18.9775 & 77.8493 & 413.6956 \\ \hline
		16 & 7.0895 & 15.4453 & 52.6716 & 279.7399 \\ \hline
	\end{tabular}} \\
	\subfloat[\textbf{Test 4}]{
	\begin{tabular}{| c || c | c | c | c |}
		\hline
		\backslashbox{$1/H$}{$1/h$} & 16 & 32 & 64 & 128 \\ \hline \hline
		4 & 7.7226 & 13.6618 & 27.8054 & 68.6090 \\ \hline
		8 & 7.2057 & 12.2264 & 23.0226 & 50.6394 \\ \hline
		16 & 6.9673 & 11.7750 & 21.6517 & 45.9328 \\ \hline
	\end{tabular}}
	\caption{Behavior of $\kappa_A(P_{ad})$ as $h$ and $H$ vary when
$J = 4$. \label{table 5.5}}
\end{table} \clearpage

\begin{table}[ht] 
\centering
	\subfloat[\textbf{Test 1}]{
	\begin{tabular}{| c || c | c | c | c |}
		\hline
		\backslashbox{$1/H$}{$1/h$} & 32 & 64 & 128 & 256 \\ \hline \hline
		8 & 212.7543 & $1.3522 \times 10^3$ & $6.7950 \times 10^3$ & $2.7336 \times 10^4$ \\ \hline
		16 & 75.7638 & 428.5503 & $1.8990 \times 10^3$ & $6.2384 \times 10^3$ \\ \hline
		32 & 11.0954 & 108.7492 & 441.1266 & $1.4728 \times 10^3$ \\ \hline
	\end{tabular}} \\
	\subfloat[\textbf{Test 2}]{
	\begin{tabular}{| c || c | c | c | c |}
		\hline
		\backslashbox{$1/H$}{$1/h$} & 32 & 64 & 128 & 256 \\ \hline \hline
		8 & 98.1017 & 669.2190 & $4.1498 \times 10^3$ & $2.3439 \times 10^4$ \\ \hline
		16 & 45.5532 & 306.0990 & $1.8395 \times 10^3$ & $8.5735 \times 10^3$ \\ \hline
		32 & 9.3610 & 111.3056 & 605.5322 & $2.6167 \times 10^3$ \\ \hline
	\end{tabular}} \\
	\subfloat[\textbf{Test 3}]{
	\begin{tabular}{| c || c | c | c | c |}
		\hline
		\backslashbox{$1/H$}{$1/h$} & 32 & 64 & 128 & 256 \\ \hline \hline
		8 & 17.5492 & 84.0599 & 470.5346 & $2.7253 \times 10^3$ \\ \hline
		16 & 12.2180 & 50.0108 & 314.2682 & $1.9264 \times 10^3$ \\ \hline
		32 & 8.2829 & 31.6447 & 183.8996 & $1.2169 \times 10^3$ \\ \hline
	\end{tabular}} \\
	\subfloat[\textbf{Test 4}]{
	\begin{tabular}{| c || c | c | c | c |}
		\hline
		\backslashbox{$1/H$}{$1/h$} & 32 & 64 & 128 & 256 \\ \hline \hline
		8 & 9.0739 & 16.7773 & 42.5919 & 147.1763 \\ \hline
		16 & 8.6294 & 15.0583 & 32.3845 & 107.8848 \\ \hline
		32 & 8.3903 & 14.4013 & 29.2256 & 91.4682 \\ \hline
	\end{tabular}}
	\caption{Behavior of $\kappa_A(P_{ad})$ as $h$ and $H$ vary when
$J = 8$. \label{table5.6}}
\end{table}

\par
\medskip
{\bf Acknowledgment:}  This work was initiated while the first autthor
was a long term visitor of the IMA at University of Minnesota in Fall 2010. 
The first author would like to thank the IMA for its financial support and 
its hospitality. The first author would also like to thank Blanca Ayuso
and Ohannes Karakashian for their many stimulating and critical discussions 
which have substantially helped to shape the structure and results of this paper.


\end{document}